\documentclass{article}

\usepackage[T2A]{fontenc}
\usepackage[utf8]{inputenc}
\usepackage[tbtags]{amsmath}
\usepackage{mathtools}
\usepackage{amsfonts,amssymb}
\usepackage{amsthm}

\usepackage{comment}
\usepackage{hyperref}

\makeatletter
\makeatother

\numberwithin{equation}{section}

\newtheorem{theorem}{Theorem}
\newtheorem{proposition}{Proposition}[section]
\newtheorem{lemma}{Lemma}
\newtheorem{assmp}{Assumption}
\newtheorem{corol}{Corollary}[section]

\newtheorem{defn}{Definition}
\newtheorem{example}{Example}

\newcommand{\Si}[1][\theta]{\sin_{\Omega}{#1}}
\newcommand{\So}[1][\theta^\circ]{\sin_{\Omega^\circ}{#1}}
\newcommand{\Ci}[1][\theta]{\cos_{\Omega}{#1}}
\newcommand{\Co}[1][\theta^\circ]{\cos_{\Omega^\circ}{#1}}

\newcommand{\sinhi}[1][\theta]{\mathrm{sinh}_{\Omega}{#1}}

\newcommand{\coshi}[1][\theta]{\mathrm{cosh}_{\Omega}{#1}}

\newcommand{\R}{\mathbb{R}}
\newcommand{\UU}{{U}}

\DeclareMathOperator{\dom}{dom}
\DeclareMathOperator*{\argmax}{arg\,max}

\providecommand{\keywords}[1]
{
  \small	
  \textbf{{Keywords: }} #1
}
\sloppy

\title{Explicit formulas for extremals in sub-Lorentzian and Finsler problems on 2- and 3-dimensional Lie groups}
\author{E.\,A.~Ladeishchikov, \and L.\,V.~Lokutsievskiy, \and N.\,V.~Prilepin}

\date{May 25, 2024}
\begin{document}

\maketitle
\begin{abstract}
In this paper, we consider the problem of finding geodesics in a series of left-invariant problems endowed with sub-Lorentzian and Finsler structures. Explicit formulas for extremals are obtained in terms of convex trigonometric functions. In the sub-Lorentzian setting, the new trigonometric functions $\cosh_\Omega$ and $\sinh_\Omega$, developed here, prove especially useful; they generalize the classical $\cosh$ and $\sinh$ to the case of an unbounded convex set $\Omega\subset\R^2$.
\end{abstract}

\keywords{sub-Lorentzian geometry, sub-Finsler geometry, convex trigonometry, Pontryagin maximum principle, three-dimensional unimodular Lie groups}

\markright{extremals in sub-Lorentzian and Finsler problems}

\footnotetext[0]{The research of L.V. Lokutsievskiy was carried out at the MCMU of the Steklov Mathematical Institute of the RAS with financial support from the Ministry of Education and Science of Russia (agreement No. 075-15-2025-303). The research of E.A. Ladeishchikov and L.V. Lokutsievskiy was supported by a grant from the Foundation for the Advancement of Theoretical Physics and Mathematics <<BASIS>>.}

\section{Introduction}

Lorentzian geometry is a cornerstone of the mathematical foundation of the theory of relativity. Unlike classical Riemannian geometry, in Lorentzian geometry the ability to join two points by a timelike curve guarantees that its length can be made arbitrarily small. The reason lies in the physical meaning of Lorentzian length: the length of a timelike curve equals the proper time of a traveller moving along it. One can easily devise a curve joining two given timelike points such that the motion along it occurs at a speed arbitrarily close to the speed of light. On the other hand, the proper time between two timelike points in classical Minkowski space~$\R^{3,1}$ is clearly bounded above by the difference of their time coordinates. Thus, a natural description of geodesics on Lorentzian manifolds is as (locally) longest curves, and the geodesic problem can be regarded as an optimal-control problem of finding such longest curves. Methods for explicitly finding geodesics in Riemannian, sub-Riemannian, Finsler, sub-Finsler, Lorentzian and sub-Lorentzian manifolds are very similar: they all begin with explicit integration of the Hamiltonian system given by Pontryagin’s maximum principle (or the Euler–Lagrange equations) and an analysis of the abnormal case. The goal of this work is to develop explicit methods for solving such systems in sub-Lorentzian and Finsler problems on classical spaces.

Active research in sub-Lorentzian geometry began with the pioneering works of Berestovskii and Gichev \cite{Berestovskii}, which studied metrized orders on topological groups, introduced the notion of an anti-metric in general form, and investigated its properties, as well as the papers of Grochowski \cite{Grochowski1,Grochowski2}, which addressed the existence of longest curves, local properties of the sub-Lorentzian metric, reachability sets, and more. We should also mention the works \cite{MarkinaVasiliev,Vasiliev,Markina} by Vasiliev, Markina and co-authors, where sub-Lorentzian structures on anti-de Sitter spaces, on certain Heisenberg-type groups and on the group $SU(1,1)$ are examined. In questions concerning sub-Lorentzian longest curves and the construction of an optimal synthesis, methods of geometric control theory—recently applied intensively to such problems (see, e.g., \cite{Sachkov1,Sachkov2})—prove highly effective.

\medskip

In the present work we obtain explicit formulas for geodesics in left-invariant sub-Lorentzian problems with an arbitrary anti-norm on all three-dimensional unimodular Lie groups $SU(2)$, $SL(2)$, $SE(2)$, $SH(2)$, $\mathbb{H}3$, and on the Lobachevsky plane (corresponding to the group $\mathrm{Aff}+(\R)$). All these formulas are written in terms of the new functions $\cosh_\Omega$ and $\sinh_\Omega$, which conveniently generalize $\cosh$ and $\sinh$. The classical functions $\cosh$ and $\sinh$ serve well for describing geodesics in the case of the standard two-dimensional Lorentz norm $x^2 - y^2$ (determined by a hyperbola in the plane); however, for an arbitrary concave anti-norm on a cone they are unsuitable. Section \ref{sec:convex_trigonometry} introduces the functions $\cosh_\Omega$ and $\sinh_\Omega$, a natural extension of $\cosh$ and $\sinh$ to an arbitrary two-dimensional anti-norm, inheriting many of their key properties. In particular, these functions are extremely useful for expressing geodesics in sub-Lorentzian problems with an arbitrary planar anti-norm.

In addition, we derive explicit formulas for geodesics in the left-invariant Finsler problem on the Heisenberg group $\mathbb{H}_3$ with a Finsler norm that admits generalized spherical coordinates (see Section \ref{sec:Heisenberg_Finsler} for details). For instance, we obtain formulas for geodesics corresponding to a left-invariant $\ell_p$ norm, $1\le p\le\infty$, on $\mathbb{H}_3$. Note that not every three-dimensional Finsler norm allows such coordinates, but the $\ell_p$ norms do.

\medskip

The paper is structured as follows. Section~\ref{sec:general_statements} presents general formulations of sub-Lorentzian and sub-Finsler problems on Lie groups. Section~\ref{sec:concrete_statements} formulates these problems in coordinates for the three settings studied in this work: (1) the Finsler problem on the three-dimensional Heisenberg group; (2) the Lorentz problem on the Lobachevsky plane; (3) the sub-Lorentzian problem on all three-dimensional unimodular Lie groups. Section \ref{sec:optimal_control_problem} casts the problems of finding shortest and longest curves as time-optimal and time-minimal problems,\footnote{The transition to the time-minimal formulation in the sub-Lorentzian case is non-trivial, in contrast to the straightforward passage to the time-optimal formulation in the sub-Finsler case.} respectively, and writes Pontryagin’s maximum principle (PMP) in a left-invariant form for Lie groups. Geodesics may, in general, be singular extremals of PMP, so Section \ref{sec:singular_extremals} discusses their determination. Section \ref{sec:convex_trig} provides a brief introduction to convex trigonometry, which is then employed in Section \ref{sec:Heisenberg_Finsler} to obtain explicit formulas for Finsler geodesics on the Heisenberg group. For Lorentzian and sub-Lorentzian problems, the natural object is not a norm but an anti-norm, and geodesics can be expressed not via convex trigonometric functions generalizing $\cos$ and $\sin$, but via functions generalizing $\cosh$ and $\sinh$. Hence Section \ref{sec:antinorms_and_balls} discusses general properties of anti-norms, and Section \ref{sec:convex_trigonometry} constructs convex trigonometric functions extending the classical hyperbolic functions $\cosh$ and $\sinh$ to an arbitrary two-dimensional anti-norm. Section \ref{sec: light_time_like} proves Theorem \ref{thrm: light_space_like} on the separation of extremals in the sub-Lorentzian problem into lightlike and timelike ones. Applying this theorem and the functions $\mathrm{ch}\Omega, \mathrm{sh}\Omega$, Section \ref{sec:lobachevski} derives explicit formulas for extremals in the Lorentz problem on the Lobachevsky plane, while Section \ref{sec:3d_groups} does the same for the sub-Lorentzian problems on all three-dimensional unimodular Lie groups.

\section{Left-Invariant Sub-Finsler and Sub-Lorentz Problems}
\label{sec:general_statements}

We begin with a general formulation of the problems of left-invariant geodesics.  
Let $G$ be a real Lie group and let $\mathfrak{g} \simeq T_1G$ be its Lie algebra.  
A left-invariant sub-Finsler structure on $G$ is specified by a convex compact set\footnote{Here, as usual, $\mathrm{ri}\,\UU$ denotes the relative interior of the convex set~$\UU$.} $\UU \subset \mathfrak{g}$ such that $0 \in \mathrm{ri}\,\UU$.  
A left-invariant sub-Lorentz structure on $G$ is specified by a convex closed set $\UU \subset \mathfrak{g}$ such that $0 \notin \UU$ and, for all $\xi \in \UU$ and $ \lambda \ge 1$, one has $\lambda\xi \in \UU$ (in this case we say that $\UU$ satisfies the \textit{ray property}).  
In both cases the prefix \textit{sub} is used when the corresponding set lies in a proper subspace of $\mathfrak{g}$.  
Otherwise, when $\mathrm{int}\,U\ne\varnothing$, one simply speaks of a Finsler or Lorentz structure.

In the sub-Finsler case the Minkowski functional $\mu_{\UU}$,
\[
    \mu_\UU(\xi) = \inf\{\lambda>0 : \xi\in \lambda\UU\},
\]
defines an \emph{almost norm} on $\mathfrak{g}$ in the sense that  
$\mu_\UU$ satisfies all the properties of a norm except symmetry, $\mu_\UU(\xi)\ne\mu_\UU(-\xi)$, and finiteness (if $\mathrm{span}\,\UU\ne \mathfrak{g}$ then\footnote{We use the standard convention of convex analysis: $\sup\varnothing=-\infty$ and $\inf\varnothing=+\infty$.} $\mu_\UU(\xi)=+\infty$ for $\xi\notin\mathrm{span}\,\UU$).

It is well known that for a convex closed set $\UU$ the following holds: if $0\in\UU$, then $\xi\in\UU$ iff $\mu_\UU(\xi)\le 1$ (this property, in particular, guarantees a one-to-one correspondence between convex compact sets $\UU$ whose relative interior contains the origin and almost norms).  
In the sub-Lorentz case the detailed relationship between anti-norms and their unit balls has additional specifics, discussed in~\S\ref{sec:antinorms_and_balls}.

We now give the precise definition of an anti-norm (see~\cite{Protasov}).  
As usual, for a concave function $\nu:\R^n\to\R\sqcup\{-\infty\}$ we denote its domain by
\[
    \dom\nu=\{\xi\mid\nu(\xi)\ne-\infty\}.
\]
\begin{defn}
    \label{defn: antinorm}
    A function $\nu:\R^n\to\R\sqcup\{-\infty\}$ is called an \emph{anti-norm} if
    \begin{enumerate}
        \item $\nu$ is concave and closed\footnote{That is, its hypograph is closed, which is equivalent to upper semicontinuity.};
        \item $\nu$ is positively homogeneous, i.e.\ $\nu(\lambda\xi)=\lambda\nu(\xi)$ for all $\lambda\ge 0$ and $\xi\in\R^n$;
        \item $\dom\nu\ne\varnothing$ and $\nu(\xi)>0$ for every\footnote{Here $\mathrm{ri}$ denotes the relative interior of a set.} $\xi\in\mathrm{ri}\,\dom\nu$.
    \end{enumerate}
\end{defn}
\begin{defn}
    \label{defn: antinorm_ball}
    The \emph{unit ball} of an anti-norm $\nu$ is the set $U=\{\xi\in \R^n \mid \nu(\xi) \ge 1\}$.
\end{defn}

Next, we discuss how to construct an anti-norm from its unit ball.  
The unit ball $\UU$ of any anti-norm is always a convex, closed, unbounded set that does not contain the origin and satisfies the \textit{ray property}: for every $\lambda \ge 1$ one has $\lambda \UU \subset \UU$ (see~\S\ref{sec:antinorms_and_balls} for details).

In the sub-Lorentz case the functional $\nu_\UU$ that serves as the anti-norm with unit ball $\UU$ is defined slightly differently from the Minkowski functional in the sub-Finsler case.  
We start with an auxiliary functional $\mathring\nu_\UU$ defined analogously to the Minkowski functional:
\[
    \mathring\nu_\UU(\xi) = \sup\{\lambda>0 : \xi \in \lambda\UU\}.
\]
The functional $\mathring\nu_\UU$ is always concave by construction, but it need not be closed:

\begin{example}
    Let $\UU=\{x^2-y^2\ge 1\}\subset \R^2$.  
    Then $\mathring\nu_\UU(x,y)=x^2-y^2$ if $x^2-y^2>0$, and $\mathring\nu_\UU(x,y)=-\infty$ otherwise.  
    Hence $\mathring\nu_\UU$ is not closed; for instance, $\mathring\nu_\UU(0,0)=-\infty$.
\end{example}

Therefore, we define the functional $\nu_\UU$ that gives the anti-norm with unit ball $\UU$ as the closure of $\mathring\nu_\UU$:
\[
    \label{defn:antinorm}
    \nu_\UU = \mathrm{cl}\,\mathring\nu_\UU
    \quad \Longleftrightarrow\quad
    \nu_\UU(\xi) = \limsup_{\eta\to\xi} \mathring\nu_\UU(\eta).
\]
The functional $\nu_\UU$ defines an anti-norm on $\mathfrak{g}$.  
For what follows, the following property of $\UU$ is crucial: $\xi\in\UU$ iff $\nu_\UU(\xi)\ge 1$ (in particular, this guarantees the one-to-one correspondence $\UU\leftrightarrow\nu_\UU(\cdot)$).  
Note that this property holds far from universally for convex closed sets $\UU$ with $0\notin\UU$; namely, $\UU$ must satisfy the ray property mentioned above (see~\S\ref{sec:antinorms_and_balls} for details).

In both cases the almost norm and the anti-norm are transported by left translations $L_q(x)=q\cdot x$ over the whole group $G$, $q\in G$, and the length of an arbitrary Lipschitz\footnote{More generally, absolutely continuous; however, every such curve can be reparameterized to become Lipschitz without changing the value of the functional, so throughout the paper we seek optimal curves in the class of Lipschitz curves.} curve $q:[0,T]\to G$ is computed by
\footnote{The letters $f$ and $l$ are chosen to emphasize the difference between Finsler and Lorentz lengths.}
$$
    \ell_f(q) = \int_0^T \mu_{\UU}\bigl(dL_{q(t)}^{-1}\dot{q}(t)\bigr)\,dt,
    \qquad
    \ell_l(q) = \int_0^T \nu_{\UU}\bigl(dL_{q(t)}^{-1}\dot{q}(t)\bigr)\,dt.
$$

In the sub-Finsler case (as in the Riemannian case) one expects that curves connecting two given points cannot have arbitrarily small length.  
Hence the objects of interest are the \textit{shortest} curves, i.e.\ the solutions of
$$
    q(0) = q_0,\ q(T) = q_1,\ \ell_f(q) \to \min.
$$

In the sub-Lorentz case (as in the classical Minkowski space $\R^{3,1}$) one expects that curves connecting two given points cannot have arbitrarily large length\footnote{The interpretation from the viewpoint of special relativity is as follows: in a sub-Lorentz problem the length of a curve is precisely the proper time experienced by a traveller moving along that curve in space-time.}.  
Thus the objects of interest are the \textit{longest} curves, i.e.\ the solutions of
\[
    q(0) = q_0,\ q(T) = q_1,\ \ell_l(q) \to \max.
\]

In both cases, by left-invariance, we may assume $q_0=1$.  
Also, in both cases one may fix $T$, for instance $T=1$, but it will be more convenient for us to regard $T\ge 0$ as free.
```latex
\section{Problem Statements}
\label{sec:concrete_statements}

The paper describes extremals (i.e.\ solutions of Pontryagin’s Maximum Principle) for the following problems: 

\begin{itemize}
    \item the Finsler problem on the three–dimensional Heisenberg group $\mathbb{H}_3$ (see paragraph~\ref{sec:Heisenberg_Finsler});
    \item the Lorentz problem on the Lobachevsky plane (which can be described as the group of orientation-preserving affine transformations of the line), $\mathrm{Aff}_+\mathbb{R}$ (see paragraph~\ref{sec:lobachevski});
    \item sub-Lorentz problems on all three–dimensional unimodular Lie groups $SU(2)$, $SL(2)$, $SE(2)$, $SH(2)$, and $\mathbb{H}_3$ (see paragraph~\ref{sec:3d_groups}).
\end{itemize}

\subsection{The Finsler problem on the three–dimensional Heisenberg group}
\label{subsec: 3D Heisenberg}

Let $\mathbb{H}_3$ be the three-dimensional Heisenberg group, i.e.\ the set of matrices 
\[
    \begin{pmatrix}
        1 & x_1 & x_3\\
        0 & 1   & x_2\\
        0 & 0   & 1
    \end{pmatrix}
\]
with the usual matrix multiplication, and let $1$ denote the identity matrix.  
In this work we consider Finsler structures on $\mathbb{H}_3$ specified by a set $\UU$ that admits generalized spherical coordinates.  
Namely, let $\Omega \subset \mathbb{R}^2$ be a compact convex set containing $0$ in its interior, let $m>0$, $M>0$, and let $f:[-m,M]\to\mathbb{R}$ be a continuous concave function that is positive on $(-m,M)$.  
Set
\[
    \UU =
    \left\{
        \begin{pmatrix}
              0 & u_1 & u_3\\
              0 & 0   & u_2\\
              0 & 0   & 0
        \end{pmatrix}
        \ \Bigg|\ 
        u_3\in[-m,M],\ \exists \omega\in\Omega : (u_1,u_2)=f(u_3)\,\omega
    \right\}
    \subset T_1\mathbb{H}_3.
\]

Note that $f$ need not be strictly concave and need not vanish at the endpoints of the interval; for example, $f(v)=-|v|+1$ or $f\equiv\mathrm{const}$.  

It is easy to see that the set $\UU$ thus constructed is convex, compact, and contains $0$ in its interior (examples include ellipsoids centred at the origin with semi-axes along the coordinate axes, the octahedron centred at the origin, the unit balls of the norms $\|(x_1,x_2,x_3)\|^p=\sum_{i=1}^3|x_i|^p,\ p\ge1$, or a cylinder with base~$\Omega$ when $f\equiv\mathrm{const}$).

We pose the Finsler problem as in paragraph~\ref{sec:general_statements}:
\[
    q(0)=1,\ q(T)=q_1,\ 
    \ell_f(q)=\int_0^T \mu_{\UU}\bigl(dL_{q(t)}^{-1}\dot{q}(t)\bigr)\,dt \;\to\; \min,
\]
where $L_q(x)=q\cdot x$ is the left translation of $x\in\mathbb{H}_3$ by $q\in\mathbb{H}_3$, $\mu_{\UU}$ is the Minkowski functional of $\UU$, and $q(t)$ is a Lipschitz curve joining the identity to the point $q_1$.  

Explicit formulas for the geodesics in this problem are given in paragraph~\ref{sec:Heisenberg_Finsler}.

\subsection{The Lorentz problem on the Lobachevsky plane}

The Lobachevsky plane can be viewed as a Riemannian manifold obtained from a left-invariant Riemannian metric on the group $\mathrm{Aff}_+\R$ of orientation-preserving affine transformations of the line (see~\cite{Gribanova, Myrikova, ALS2021}).  
Building on this idea, we describe a Lorentz structure on the Lobachevsky plane.

Consider the group $G=\mathrm{Aff}_+\R$ of orientation-preserving affine transformations of the line.  
This is a two-dimensional Lie group generated by translations $x\mapsto a+x$ and dilations $x\mapsto bx$, $b>0$.  
Hence any element of the group can be represented as a pair $(a,b)$ with $b>0$, so the manifold $\mathrm{Aff}_+(\mathbb{R})$ is naturally identified with the upper half-plane.

If the Lie algebra $\mathfrak{g}=T_1G=\{(\xi,\eta)\}\simeq\R^2$ is equipped with the Euclidean norm $\|(\xi,\eta)\|^2=\xi^2+\eta^2$, the resulting left-invariant structure turns $\mathrm{Aff}_+\mathbb{R}$ into the Poincaré model of the Lobachevsky plane.  
The general Finsler problem on the Lobachevsky plane was solved in \cite{ALS2021}; the exact isoperimetric inequality in the Finsler case was completely resolved in~\cite{Myrikova}.  
The Lorentz problem on the Lobachevsky plane with a quadratic anti-norm was solved in~\cite{SachkovLobachevskiy}.  
In the present paper we find the extremals on $\mathrm{Aff}_+\mathbb{R}$ corresponding to an arbitrary Lorentz structure (see paragraph~\ref{sec:lobachevski}).  
We now formulate this problem.

Let $\UU \subset \mathfrak{g}$ be a convex, closed, unbounded set that does not contain the origin and satisfies the ray property.  
Then $\UU$ is the unit ball of some anti-norm $\nu_{\UU}$ on $\mathfrak{g}$ (see Definition~\ref{defn:antinorm}).  
Transporting this anti-norm by left translations to the tangent plane at every point of $G$ yields a Lorentz structure on $G$.

As stated in paragraph~\ref{sec:general_statements}, the Lorentz problem on this group is
\[
    \label{subl: affr}
    q(0)=1,\ q(T)=q_1,\ 
    \ell_l(q)=\int_0^T \nu_{\UU}\bigl(dL_{q(t)}^{-1}\dot{q}(t)\bigr)\,dt \;\to\; \max,
\]
where $L_q(x)=q\cdot x$ is the left translation of $x$ by $q$ in $\mathrm{Aff}_+\R$, $\nu_{\UU}$ is the anti-norm determined by the unit ball $\UU$ as in~\S\ref{defn:antinorm}, and $q(t)$ is a Lipschitz curve joining the identity to $q_1$.

\subsection{Sub-Lorentz problems on three-dimensional unimodular Lie groups}
\label{ssec:3d_lie_groups_porblem_statement}

Let $G$ be a three-dimensional unimodular Lie group and $\mathfrak{g}$ its Lie algebra.  
We consider all possible sub-Lorentz problems on such groups.  
Set $\Delta=\mathrm{span}\,\UU\subset\mathfrak{g}$.  
Since $\dim\mathfrak{g}=3$, we have $\dim\Delta\le3$.  
We classify the possible structures.

\smallskip\noindent
\textbf{Case $\boldsymbol{\dim\Delta=0}$.}  
Trivial, because the length of any non-constant curve equals $-\infty$.

\smallskip\noindent
\textbf{Case $\boldsymbol{\dim\Delta=1}$.}  
By the Picard existence and uniqueness theorem there exists a unique curve through $q_0=1$ tangent to the left translations of $\Delta$, namely the one-parameter subgroup $e^\Delta$, and motion along this curve is possible only in one direction.  
Since the length is independent of re-parameterization, there is no problem of choosing the \emph{best} curve from $q_0=1$ to $q_1$: either $q_1\notin e^\Delta$, in which case every curve from $q_0$ to $q_1$ has sub-Lorentz length $-\infty$, or $q_1\in e^\Delta$, in which case only the segment of $e^\Delta$ between $q_0$ and $q_1$ can have finite length, provided the motion is in the correct direction.

\smallskip\noindent
\textbf{Case $\boldsymbol{\dim\Delta=3}$.}  
Not studied here, as the paper deals with sub-Lorentz problems with a two-dimensional control set.

\smallskip\noindent
\textbf{Assume $\boldsymbol{\dim\Delta=2}$.}  
Two principal situations arise:

\begin{enumerate}
    \item[$\bullet$] $[\Delta,\Delta]\subset\Delta$.  
    Then $\tilde G=e^{\Delta}$ is a two-dimensional subgroup of $G$.  
    If $q_1\notin\tilde G$, every curve from $q_0=1$ to $q_1$ has length $-\infty$.  
    Thus we may assume $q_1\in\tilde G$.  
    Since $\dim\tilde G=2$, only two Lie algebras are possible:
    \begin{itemize}
        \item the commutative group $\Delta\simeq\R^2$ when $\dim[\Delta,\Delta]=0$ (the Lorentz geodesics in two-dimensional Minkowski space are well known);
        \item the algebra of $\mathrm{Aff}_+(\R)$ described above (geodesics found in paragraph~\ref{sec:lobachevski}).
    \end{itemize}

    \item[$\bullet$] $[\Delta,\Delta]\not\subset\Delta$.  
    Then $\Delta+[\Delta,\Delta]=\mathfrak{g}$, i.e.\ Hörmander’s condition holds.  
    We classify the possible positions of the plane $\Delta$ in $\mathfrak{g}$ with respect to the Lie algebra structure.  
    According to \cite{ALS2021, AgrachevBarilari}, one can choose a basis $f_1,f_2$ of $\Delta$ such that
    \begin{equation}
        \label{eq:3d_lie_brackets}
        [f_1,f_2]=f_3\notin\Delta,\qquad
        [f_3,f_1]=a\,f_2,\qquad
        [f_3,f_2]=b\,f_1,
    \end{equation}
    where $a,b\in\{0,\pm1\}$ and $a+b\ge0$.  
    More precisely,
    \begin{itemize}
        \item $a=b=0$: $\mathfrak{g}=\mathfrak{h}_3$;
        \item $a=1$, $b=0$: $\mathfrak{g}=\mathfrak{se}_2$;
        \item $a=0$, $b=1$: $\mathfrak{g}=\mathfrak{sh}_2$;
        \item $a=\pm1$, $b=1$: $\mathfrak{g}=\mathfrak{sl}_2$ (here $\Delta$ can be positioned in two essentially different ways);
        \item $a=1$, $b=-1$: $\mathfrak{g}=\mathfrak{su}_2$.
    \end{itemize}
\end{enumerate}

We again pose the sub-Lorentz problem as in paragraph~\ref{sec:general_statements}:
\[
    q(0)=1,\ q(T)=q_1,\ 
    \ell_l(q)=\int_0^T \nu_{\UU}\bigl(dL_{q(t)}^{-1}\dot{q}(t)\bigr)\,dt \;\to\; \max,
\]
where $L_q(x)=q\cdot x$ is the left translation of $x$ by $q$ in the chosen group, $\nu_{\UU}$ is the anti-norm determined by the unit ball $\UU$ as in paragraph~\ref{defn:antinorm}, and $q(t)$ is a Lipschitz curve joining the identity to $q_1$.  
Because of~\eqref{eq:3d_lie_brackets}, we have $dL_{q(t)}^{-1}\dot{q}(t)=u_1f_1+u_2f_2$ for some $u_1,u_2\in\R$, so the problem reduces to finding functions $(u_1,u_2)(t)$ that maximize the functional.  
It turns out that in all these cases the extremal equations on the Lie coalgebra can be integrated in a unified way (see paragraph~\ref{sec:3d_groups}).
```
\section{Transition to Minimum-Time and Maximum-Time Problems}
\label{sec:optimal_control_problem}

In both the sub-Finsler and sub-Lorentz settings the shortest, respectively longest, curves satisfy the Pontryagin Maximum Principle with Hamiltonians
\begin{equation}
\label{eq:Pontryagin_function_generalized_fins}
    \mathcal{H}_f(q,p,u)=\langle p, dL_{q}u\rangle+\lambda_0\mu(u)
    \;\longrightarrow\;\max_{u\in\R\UU},
\end{equation}
for the sub-Finsler problem, where $\mu$ is the almost norm with unit ball~$\UU$, and
\begin{equation}
\label{eq:Pontryagin_function_generalized_lorenz}
    \mathcal{H}_l(q,p,u)=\langle p, dL_{q}u\rangle-\lambda_0\nu(u)
    \;\longrightarrow\;\max_{u\in C},
\end{equation}
for the sub-Lorentz problem, where $\nu$ is the anti-norm,  
$C=\dom\nu=\mathrm{cl}\,\R_{+}\UU$, and $\UU$ is the unit ball of~$\nu$ (see~\S\ref{sec:general_statements}).  
Without loss of generality one may assume $\lambda_0\in\{0,-1\}$.

Recall that $\Delta=\mathrm{span}\,\UU$.

\medskip
\noindent\textbf{The sub-Finsler case.}
Let $U=\{u\mid\mu(u)\le1\}$ be the unit ball of~$\mu$.  
If a curve has finite sub-Finsler length, $\ell_f(q)<\infty$, then $\dot q(t)\in dL_{q(t)}\Delta$ for almost all~$t$.  
Indeed, should the set of times with $\dot q(t)\notin dL_{q(t)}\Delta$ have positive measure,  
$\mu_{\UU}\!\bigl((dL_{q(t)})^{-1}\dot q(t)\bigr)$ would equal $+\infty$ on a set of positive measure, forcing $\ell_f(q)=+\infty$.  
Consequently, when $q_0=1$ and $q_1$ can be joined by a finite-length curve, it suffices to minimise the length over curves $q(t)$ with $\dot q(t)\in dL_{q(t)}\Delta$ a.e.  
Since re-parameterisation does not change the length and\footnote{Here $\mathrm{ri}\,\UU$ denotes the relative interior of~$\UU$.} $0\in\mathrm{ri}\,\UU$, one may assume  
$\mu_{\UU}\!\bigl((dL_{q(t)})^{-1}\dot q(t)\bigr)\le1$, i.e.\ $\dot q(t)\in dL_{q(t)}\UU$ for $t\in[0,T]$ with some $T>0$ (when $q_0\neq q_1$).  
Under this parametrisation the length never exceeds the travel time~$T$; a shortest curve can be parametrised naturally by enforcing $\mu_{\UU}\!\bigl((dL_{q(t)})^{-1}\dot q(t)\bigr)=1$, in which case its length equals~$T$.

Thus the shortest curves solve the following \emph{minimum-time problem}:
\begin{equation}
\label{eq:subf_general_problem}
    \begin{gathered}
        T\;\longrightarrow\;\min,\\[4pt]
        q(0)=1,\quad q(T)=q_1,\\[4pt]
        \dot q(t)=dL_{q(t)}u(t),\qquad u(t)\in\UU.
    \end{gathered}
\end{equation}

Applying PMP, the Pontryagin function for~\eqref{eq:subf_general_problem} is
\begin{equation}
\label{eq:Pontryagin_function}
    \mathcal{H}(q,p,u)=\langle p, dL_{q}u\rangle
                     =\langle dL_{q}^{*}p, u\rangle,
\end{equation}
where $p\in T^{*}_{q}G$.  
If a pair $\hat q(t),\hat u(t)$ is optimal, there exists a Lipschitz curve  
$\hat p(t)\in T^{*}_{\hat q(t)}G$, $\hat p(t)\neq0$, such that  
$(\hat q,\hat p)$ obey Hamilton’s equations $\dot q=\mathcal{H}_{p}$,  
$\dot p=-\mathcal{H}_{q}$, and the control $\hat u(t)$ maximises
\[
    \langle dL_{q}^{*}p, u\rangle\;\longrightarrow\;\max_{u\in\UU}
\]
for almost all~$t$.

\medskip
\noindent\textbf{The sub-Lorentz case.}
Set $U=\{u\mid\nu(u)\ge1\}$, the unit ball of~$\nu$, and put $\R_{+}=\{\lambda\ge0\}$.  
Again we restrict attention to Lipschitz curves $q(t)$ with $\dot q(t)\in dL_{q(t)}\Delta$ a.e.  
If for some times $u(t)=dL_{q(t)}^{-1}\dot q(t)\notin C$, then $\nu_{\UU}(u(t))=-\infty$ there;  
if the set of such times has positive measure, the curve’s length is $-\infty$ and can be discarded (we seek the longest curves).  
Hence one may assume $u(t)\in C$ a.e.  
Unlike the sub-Finsler case, one cannot in general replace $C$ by $\R_{+}\UU$ because $\UU$ is unbounded.

We shall prove (see Theorem~\ref{thrm: light_space_like}) that every abnormal extremal ($\lambda_0=0$) is light-like, i.e.\ $\nu_{\UU}(u(t))=0$ a.e., whereas on each normal extremal ($\lambda_0=-1$) one has $u(t)\in\R_{+}\UU$ a.e.  
A related discussion on time-like vs.\ light-like longest curves and normality vs.\ abnormality can be found in~\cite{Podobryaev}.  
Note that $u=0$ is special: then both the velocity and the integrand vanish, so one can re-parameterise any curve by excising the intervals where $u(t)=0$ a.e., still connecting the same points and preserving $\ell_{l}$.  
Because $\nu_{\UU}(u(t))>0$ on the re-parameterised curve, a natural parametrisation with $\nu(u(t))=\text{const}$ exists.  
Thus, whenever the longest curve has positive length, it solves the following \emph{maximum-time problem}:
\begin{equation}
\label{eq:subl_general_problem}
    \begin{gathered}
        T\;\longrightarrow\;\max,\\[4pt]
        q(0)=1,\quad q(T)=q_1,\\[4pt]
        \dot q(t)=dL_{q(t)}u(t),\qquad u(t)\in\UU.
    \end{gathered}
\end{equation}
When analysing time-like extremals in paragraphs~\ref{sec:lobachevski} and~\ref{sec:3d_groups} we shall, in the maximum condition for $\mathcal{H}_{l}$ (with $\lambda_0=-1$), impose $\nu(u)=1$ a.e.

\medskip
\noindent\textbf{Summary.}
Time-like extremals ($\lambda_0=-1$) can be handled with either of the equivalent conditions
\[
    \langle dL_{q}^{*}p, u\rangle+\nu(u)\;\longrightarrow\;\max_{u\in C},
    \qquad
    \langle dL_{q}^{*}p, u\rangle\;\longrightarrow\;\max_{u\in\UU},
\]
whereas light-like extremals ($\lambda_0=0$) are governed by
\[
    \langle dL_{q}^{*}p, u\rangle\;\longrightarrow\;\max_{u\in C}.
\]

\bigskip

Finding extremals faces two main obstacles: the presence of singular extremals and the arbitrary shape of~$\partial U$.  
Paragraph~\ref{sec:singular_extremals} gives the definition of singular extremals and standard tools for dealing with them;  
paragraph~\ref{sec:convex_trig} summarizes the properties of convex trigonometric functions.  
Paragraph~\ref{sec:convex_trigonometry} develops the theory of convex hyperbolic functions, later used to find extremals in the sub-Lorentz problem on the motion group of the line (paragraph~\ref{sec:lobachevski}) and to integrate the vertical subsystem in the sub-Lorentz problems on unimodular Lie groups (paragraph~\ref{sec:3d_groups}).
\section{Singular Extremals}
\label{sec:singular_extremals}

The Pontryagin functions for the sub-Lorentz length–maximisation problem and for the sub-Finsler length–minimisation problem have the same general form, cf.~\eqref{eq:Pontryagin_function_generalized_fins},~\eqref{eq:Pontryagin_function_generalized_lorenz}, the only difference being the sign of the $\lambda_0$ term.

An extremal is called \emph{abnormal} if $\lambda_0=0$.  
Put $\lambda_0=-1$.  
As explained in \S~\ref{sec:optimal_control_problem}, in the sub-Finsler setting the extremals are those of the minimum-time problem; in \S~\ref{sec: light_time_like} we shall show that in the sub-Lorentz problems under consideration one may assume $u\in\UU$ in the normal case, and normal extremals coincide with the extremals of the maximum-time problem.  
Thus, in the normal case the maximum condition takes the unified form
\[
    \mathcal{H}(q,p,u)=\langle p, dL_{q}u\rangle
    \;\longrightarrow\;\max_{u\in\UU},
\]
both for the sub-Finsler and for the sub-Lorentz problems.

Because $\mathcal{H}$ is affine in $u\in\UU$, its maximum is attained on some \emph{face} of~$\UU$.  
Recall that a face of a convex set is its intersection with any supporting hyperplane.

In all the problems listed above, \emph{singular extremals} play a crucial role.  
Before solving them explicitly, we recall the definition of singular extremals\footnote{As always, by an \emph{extremal} we mean any solution of the PMP.}.

\begin{defn}
    Let $\mathbb{T}\subset[0,T]$ be a Lebesgue-measurable set of positive measure.  
    We say that an extremal is \emph{singular along the face $F$} of the convex set $\UU$, where $\dim F\ge1$, on~$\mathbb{T}$ if for almost every $t\in\mathbb{T}$ one has
    \[
        \argmax_{u\in\UU}\mathcal{H}(q,p,u)=F,
    \]
    irrespective of whether we are solving a sub-Finsler or a sub-Lorentz problem.
\end{defn}

The meaning is as follows.  
At each instant $t$ the optimal control is obtained by solving the finite-dimensional optimisation problem
\[
    \mathcal{H}=\langle dL_{q}^{*}p,u\rangle
    \;\longrightarrow\;\max_{u\in\UU}.
\]
Hence the covector $dL_{q}^{*}p$ must define a supporting half-space to~$\UU$; otherwise the maximum is unattainable.  
If the supporting half-space meets $\UU$ in a single point, the control $u$ at that instant is uniquely determined.  
If, however, the intersection is a face $F$ with $\dim F\ge1$, the control cannot be uniquely selected directly from the PMP.  
The remedy is that control values are defined up to a set of times of measure~$0$.  
Since a convex set contains at most countably many faces and, for each face $F$, the set of singular instants $\mathbb{T}_{F}$ is measurable, the control is uniquely determined on a set of full measure provided each $\mathbb{T}_{F}$ has measure~$0$.  

Conversely, if for some face $F$ the set $\mathbb{T}_{F}$ has positive measure, then $u(t)$ is \emph{not} directly specified by the PMP for $t\in\mathbb{T}_{F}$ and must be found from additional considerations that exploit the positive measure of~$\mathbb{T}_{F}$.
\section{Convex trigonometry}
\label{sec:convex_trig}

In this section we give a concise account of the apparatus of convex trigonometric functions (for details see~\cite{Lokut2019}). These functions will be used in the next section to derive explicit formulae for geodesics in the Heisenberg–group problem.

For convenience we use variables $(x,y)\in\mathbb{R}^2$ and $(p,q)\in\mathbb{R}^{2*}$, where $\mathbb{R}^{2*}$ is the space dual to $\mathbb{R}^2$. We assume that a non-empty set $\Omega\subset\mathbb{R}^2$ is convex, compact, and contains the origin in its interior. We now state the classical bipolar theorem.

\begin{theorem}[bipolar theorem]
    Let $\Omega\subset\mathbb{R}^2$ be a non-empty convex compact set containing the origin in its interior. Then its polar set
    \[
        \Omega^{\circ}=\bigl\{(p,q)\in\mathbb{R}^{2*}\mid px+qy\le1\ \forall(x,y)\in\Omega\bigr\}
    \]
    is itself a convex compact set containing the origin in its interior, and moreover $\Omega^{\circ\circ}=\Omega$.
\end{theorem}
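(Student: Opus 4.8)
The plan is to verify the four structural claims about $\Omega^\circ$ in turn—convexity, closedness, boundedness, and $0\in\mathrm{int}\,\Omega^\circ$—and then to establish the bipolar identity $\Omega^{\circ\circ}=\Omega$ by proving the two inclusions separately, the nontrivial one resting on the separating hyperplane theorem. Throughout, the hypothesis $0\in\mathrm{int}\,\Omega$ together with compactness of $\Omega$ will be used symmetrically: $0\in\mathrm{int}\,\Omega$ forces $\Omega^\circ$ to be bounded, while boundedness of $\Omega$ forces $0\in\mathrm{int}\,\Omega^\circ$.

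For the structural properties I would first observe that $\Omega^\circ=\bigcap_{(x,y)\in\Omega}\{(p,q)\mid px+qy\le1\}$ is an intersection of closed half-planes, hence automatically convex and closed. For boundedness I would invoke $0\in\mathrm{int}\,\Omega$ to pick $r>0$ with the closed disc $\overline{B}_r(0)\subset\Omega$; then for any nonzero $(p,q)\in\Omega^\circ$ the admissible test point $r(p,q)/\|(p,q)\|\in\overline{B}_r(0)\subset\Omega$ forces $r\|(p,q)\|\le1$, so $\Omega^\circ\subset\overline{B}_{1/r}(0)$, and with closedness this yields compactness. Symmetrically, since $\Omega$ is compact it lies in some disc $\overline{B}_R(0)$, and then every $(p,q)$ with $\|(p,q)\|\le1/R$ satisfies $px+qy\le\|(p,q)\|\,\|(x,y)\|\le1$ on $\Omega$ by Cauchy–Schwarz, so $B_{1/R}(0)\subset\Omega^\circ$ and $0\in\mathrm{int}\,\Omega^\circ$.

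For the identity, the inclusion $\Omega\subset\Omega^{\circ\circ}$ is immediate and purely formal: if $(x,y)\in\Omega$ then by definition of $\Omega^\circ$ every $(p,q)\in\Omega^\circ$ obeys $px+qy\le1$, which is exactly the defining condition for $(x,y)\in\Omega^{\circ\circ}$. The reverse inclusion $\Omega^{\circ\circ}\subset\Omega$ carries the real content, and I would argue it by contraposition. Given $z_0=(x_0,y_0)\notin\Omega$, the separating hyperplane theorem applied to the point $z_0$ and the closed convex set $\Omega$ produces a nonzero covector $(p_0,q_0)$ and a constant $c$ with $p_0x+q_0y\le c<p_0x_0+q_0y_0$ for all $(x,y)\in\Omega$.

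The key step—and the one I expect to be the only genuine obstacle—is to secure the strict positivity $c>0$, since the polar is normalized to the level $1$ rather than to an arbitrary separating level. Here $0\in\mathrm{int}\,\Omega$ is essential: because $\overline{B}_r(0)\subset\Omega$, the supremum of $p_0x+q_0y$ over $\Omega$ is at least $r\|(p_0,q_0)\|>0$, so $c>0$. Rescaling by $(p,q)=(p_0,q_0)/c$ then gives a covector with $px+qy\le1$ on $\Omega$, hence $(p,q)\in\Omega^\circ$, while $px_0+qy_0>1$; this exhibits $(p,q)\in\Omega^\circ$ violating the defining inequality for $\Omega^{\circ\circ}$, so $z_0\notin\Omega^{\circ\circ}$. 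Taking the contrapositive delivers $\Omega^{\circ\circ}\subset\Omega$, which combined with the opposite inclusion completes the proof.
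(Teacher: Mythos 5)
Your proof is correct and complete: convexity and closedness of $\Omega^\circ$ as an intersection of closed half-planes, compactness from $\overline{B}_r(0)\subset\Omega$, the inclusion of a small ball around the origin into $\Omega^\circ$ via Cauchy--Schwarz, the formal inclusion $\Omega\subset\Omega^{\circ\circ}$, and the separation-plus-rescaling argument for the reverse inclusion, whose only delicate point --- strict positivity of the separating level $c$ --- you correctly secure from $0\in\mathrm{int}\,\Omega$. One thing to be aware of: the paper offers no proof of this statement at all; it is quoted as the classical bipolar theorem, with \cite{Lokut2019} cited for the surrounding theory, so there is no in-paper proof to measure yours against. The closest analogue in the paper is its proof of the antipolar version, Theorem~\ref{theorem:biantipolar}, where the inclusion $\Omega^{\diamond\diamond}\subset\Omega$ is established by the same separation technique but with a different normalization device: since in that setting the defining inequality is $px-qy\ge 1$ and the origin lies outside $\Omega$, the paper separates the whole segment $\{t(x_0,y_0)\mid 0\le t\le 1\}$ (disjoint from $\Omega$ by the ray property) from $\Omega$, which lets it take the separating line at level $1$ directly; in your compact setting with $0\in\mathrm{int}\,\Omega$ the corresponding role is played by your lower bound $c\ge r\|(p_0,q_0)\|>0$ followed by rescaling. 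Both are instances of the same standard idea, and yours is exactly the textbook argument appropriate to the statement as given.
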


By definition of the polar set, $(p,q)\in\Omega^{\circ}$ if and only if every point $(x,y)\in\Omega$ lies in the half-plane
\[
    \bigl\{(x,y)\in\mathbb{R}^2\mid px+qy-1\le0\bigr\}.
\]
The boundary $\partial\Omega$ can be parametrised by special functions $(\Ci,\ \Si)$.  Suppose first that $\theta\in[0,2S(\Omega))$, where $S(\Omega)$ denotes the area of $\Omega$.  Let $P\in\partial\Omega$ be such that the area of the sector of $\Omega$ between the rays $Ox$ and $\mathbb{R}_{+}(\cos_{\Omega}\theta,\sin_{\Omega}\theta)$ equals $\frac12\theta$.  By definition, $(\Ci,\ \Si)$ are the coordinates of~$P$.  If $\theta\notin[0,2S(\Omega))$, the functions $(\Ci,\ \Si)$ are extended periodically with period $2S(\Omega)$.  When $\Omega$ is the unit Euclidean disc centred at the origin, $(\Ci,\ \Si)$ coincide with the classical functions $(\cos\theta,\sin\theta)$.  The polar set is parametrised analogously by the pair $(\Co[\eta],\ \So[\eta])$.

Associate to every angle\footnote{We shall often call the argument of $\cos_{\Omega}$ and $\sin_{\Omega}$ an \emph{angle}, meaning the area of a sector of the indicated type.} $\theta$ (in general) a set of angles $\eta$ as follows.  Put $(x,y)=(\Ci,\ \Si)$ and consider the set of support lines to~$\Omega$ at $(x,y)$,
\[
    \bigl\{(p,q)\in\Omega^{\circ}\mid px+qy-1=0\bigr\}.
\]
Each point $(p,q)$ in this set lies on $\partial\Omega^{\circ}$ and therefore corresponds, as above, to a sector of area $\frac12\eta$.  The resulting set of $\eta$’s is denoted $\theta^{\circ}$.  Interchanging the roles of $\Omega$ and $\Omega^{\circ}$ produces, by analogy, for every angle $\eta$ a (possibly non-single) set of angles $\theta$, which we denote ${}^{\circ}\eta$.

Thus the angles $\theta$ and $\eta$ correspond to each other precisely when
\[
    \eta\in\theta^{\circ}
    \qquad\Longleftrightarrow\qquad
    \theta\in{}^{\circ}\eta.
\]
From the definition of the polar set and of the correspondence ${}^{\circ}$ it follows that
\begin{itemize}
    \item $\Co[\eta]\Ci+\So[\eta]\Si\le1$ for all $\theta,\eta$;
    \item $\Co[\eta]\Ci+\So[\eta]\Si=1$ if and only if $\theta\leftrightarrow\eta$.
\end{itemize}

The functions $\Ci$ and $\Si$ are Lipschitz and differentiable almost everywhere:
\[
    \Ci'(\theta)=-\So,
    \qquad
    \Si'(\theta)=\Co.
\]
Where the correspondence $\theta\mapsto\theta^{\circ}$ is not unique, the following intervals are bounded by the right and left derivatives:
\begin{itemize}
    \item $-\{\So[\eta]\mid\eta\in\theta^{\circ}\}$ for $\Ci$,
    \item $\{\Co[\eta]\mid\eta\in\theta^{\circ}\}$ for $\Si$.
\end{itemize}

For every point in the plane one can make an analogue of the polar change of variables,
\[
    (x,y)=A(\Ci,\Si)
    \quad\text{or}\quad
    (p,q)=B(\Co[\eta],\So[\eta]),
\]
for suitable $A,\theta$ (respectively $B,\eta$).  The Jacobian of these changes equals $A$ (respectively $B$).

If a point $(x,y)(t)=A(t)(\Ci,\Si)(t)$ moves in the plane so that $x(t),y(t)$ are absolutely continuous and $x^{2}(t)+y^{2}(t)\neq0$, then $A(t)$ and $\theta(t)$ are absolutely continuous as well and
\begin{equation}
\label{eq:dot_theta_dot_A}
    \dot{\theta}=
        \frac{\dot y\,\cos_{\Omega}\theta-\dot x\,\sin_{\Omega}\theta}{A},
    \qquad
    \dot{A}=
        \dot x\,\cos_{\Omega^{\circ}}\eta+
        \dot y\,\sin_{\Omega^{\circ}}\eta,
    \quad\text{where }\eta\in\theta^{\circ}.
\end{equation}

\section{Finsler problem on the Heisenberg group}
\label{sec:Heisenberg_Finsler}

In this section we obtain explicit formulae for extremals in Finsler problems on the Heisenberg group, assuming that the unit ball of the Finsler norm admits generalised spherical coordinates (see~\ref{subsec: 3D Heisenberg}).  With the aid of such coordinates extremals were found for a series of sub-Finsler problems on higher-dimensional Heisenberg groups, for instance for $\ell_p$ norms (see~\cite{Lokut2021}).  Throughout, we make essential use of the apparatus of convex trigonometry (see~\S\ref{sec:convex_trig}).

\medskip

To find geodesics we apply Pontryagin’s Maximum Principle to the time-optimal problem (see~\S\ref{sec:optimal_control_problem}).  For every Lipschitz trajectory $q(t)$ that minimises the functional\footnote{Such minimising trajectories exist by Filippov’s theorem~\cite{Filippov}.} there exists a Lipschitz curve $(q(t),p(t))\in T^*\mathbb{H}_3$ (called an \emph{extremal}) satisfying, for almost every~$t$,
\[
    \mathcal{H}(q,p,u):=\langle p,q\cdot u\rangle=\langle dL_q^{*}p,u\rangle,
\]
\begin{equation*}
    \begin{cases}
        \dot q = q\cdot u,\\[4pt]
        \dot p = -\dfrac{\partial\mathcal{H}}{\partial q}(q,p,u),\\[8pt]
        \displaystyle u\in\argmax_{u\in\UU}\mathcal{H}(q,p,u).
    \end{cases}
\end{equation*}

Let
\[
    q=\begin{pmatrix}
        1 & x_1 & x_3\\
        0 & 1 & x_2\\
        0 & 0 & 1
    \end{pmatrix},
    \qquad
    u=\begin{pmatrix}
        0 & u_1 & u_3\\
        0 & 0 & u_2\\
        0 & 0 & 0
    \end{pmatrix}.
\]
Then
\[
    \begin{cases}
        \dot x_1 = u_1,\\
        \dot x_2 = u_2,\\
        \dot x_3 = u_3 + x_1u_2.
    \end{cases}
\]

\medskip
\noindent\textbf{Vertical part of the PMP system.}
Because $\mathbb{H}_3$ is a Lie group, $T^*\mathbb{H}_3\simeq\mathbb{H}_3\times\mathfrak{h}_3^{*}$, where $\mathfrak{h}_3^{*}=T_1^{*}\mathbb{H}_3$ is the Lie coalgebra of $\mathbb{H}_3$.  The mapping
\[
    (q,p)\longmapsto(q,h),\qquad h=dL_q^{*}p,
\]
realises this diffeomorphism.  In these coordinates the equations for $h$ take the form
\begin{equation}
\label{eq:heisenberg_adjoint_system}
    \begin{cases}
        \dot h_1 = -h_3u_2,\\[4pt]
        \dot h_2 =  h_3u_1,\\[4pt]
        \dot h_3 = 0,\\[6pt]
        \displaystyle u\in\argmax_{u\in\UU}\bigl(u_1h_1+u_2h_2+u_3h_3\bigr).
    \end{cases}
\end{equation}

\begin{theorem}
    The first integrals of the vertical system~\eqref{eq:heisenberg_adjoint_system} are
    \[
        A=\mu_{\Omega^\circ}(h_1,h_2)=\mathrm{const}\ge0,
        \qquad
        h_3=\mathrm{const}\in\mathbb{R},
        \qquad
        A^{2}+h_{3}^{2}\neq0,
    \]
    where $\mu_{\Omega^\circ}$ is the Minkowski functional of the set $\Omega^\circ$.  According to their values, the PMP extremals issued at time $t=0$ from the identity element $(0,0,0)\in\mathbb{H}_3$ are as follows:
    \begin{enumerate}
        \item[\textup{1)}] If $A=0$, then $x_{1}(t)\equiv x_{2}(t)\equiv0$ and, writing\footnote{See the definition of $m$ and $M$ in~\S\ref{subsec: 3D Heisenberg}.} either $x_{3}(t)=-mt$ or $x_{3}(t)=Mt$.

        \item[\textup{2)}] If $A\neq0$ and $h_{3}=0$, then
        \[
            x_{1}(t)=W\int_{0}^{t}\cos_{\Omega}\theta(s)\,ds,\quad
            x_{2}(t)=W\int_{0}^{t}\sin_{\Omega}\theta(s)\,ds,
        \]
        \[
            x_{3}(t)=\int_{0}^{t}\!\bigl(u_{3}(s)+W x_{1}(s)\sin_{\Omega}\theta(s)\bigr)\,ds,
        \]
        where $\eta_{0}\in\mathbb{R}$ is a constant, $W=\max_{v\in[-m,M]}f(v)$, and $\theta(t)\in{}^{\circ}\eta_{0}$ and $u_{3}(t)\in\argmax_{v\in[-m,M]}f(v)$ are measurable functions.\footnote{For almost every $\eta_{0}$ the set ${}^{\circ}\eta_{0}$ is a singleton and the function $\theta(t)$ is constant (for every $\eta_{0}$ if $\partial\Omega$ has no corners).  The case of a non-singleton ${}^{\circ}\eta$ is discussed in Corollary~1.}

        \item[\textup{3)}] If $A\neq0$ and $h_{3}\neq0$, then
        \[
            \begin{aligned}
                x_{1}(t)=&\;\frac{A}{h_{3}}\bigl(\So[\eta(t)]-\So[\eta_{0}]\bigr),\\
                x_{2}(t)=&\;\frac{A}{h_{3}}\bigl(-\Co[\eta(t)]+\Co[\eta_{0}]\bigr),\\
                x_{3}(t)=&\;\frac{H}{h_{3}}\,t
                    -\frac12\frac{A^{2}}{h_{3}^{2}}\bigl(\eta(t)-\eta_{0}\bigr)
                    +\frac12\frac{A^{2}}{h_{3}^{2}}\Bigl(
                        2\cos_{\Omega^\circ}\eta(t)\sin_{\Omega^\circ}\eta_{0}
                        -\cos_{\Omega^\circ}\eta(t)\sin_{\Omega^\circ}\eta(t) \\
                    &\qquad\qquad\qquad
                        -\cos_{\Omega^\circ}\eta_{0}\sin_{\Omega^\circ}\eta_{0}
                    \Bigr),
            \end{aligned}
        \]
        where
        \[
            H=\max_{v\in[-m,M]}\bigl(Af(v)+h_{3}v\bigr)=\mathrm{const},\quad
            \eta(t)=\eta_{0}+\frac{Hh_{3}}{A^{2}}t-\frac{h_{3}^{2}}{A^{2}}\int_{0}^{t}u_{3}(s)\,ds,
        \]
        for some constant $\eta_{0}\in\mathbb{R}$ and a measurable function
        \[
            u_{3}(t)\in\argmax_{v\in[-m,M]}\bigl(Af(v)+h_{3}v\bigr).
        \]
    \end{enumerate}
    Conversely, every curve listed in~\textup{1)}, \textup{2)}, or~\textup{3)} lifts to a PMP extremal.
\end{theorem}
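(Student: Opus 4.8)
The plan is to reduce the pointwise maximization in~\eqref{eq:heisenberg_adjoint_system} using the product structure of $\UU$, extract the two first integrals, and then integrate the reduced system separately in each of the three regimes, relying throughout on the convex trigonometry of~\S\ref{sec:convex_trig}. First I would write $u=(f(u_3)\omega,\,u_3)$ with $\omega\in\Omega$ and $u_3\in[-m,M]$, so that
\[
  u_1h_1+u_2h_2+u_3h_3=f(u_3)\,\langle(h_1,h_2),\omega\rangle+h_3u_3 .
\]
Because $f\ge 0$, the inner maximum over $\omega\in\Omega$ is the support function of $\Omega$ at $(h_1,h_2)$, which by the bipolar theorem equals $\mu_{\Omega^\circ}(h_1,h_2)=:A$ and is attained at $\omega^\ast=(\Ci,\Si)$, where $(h_1,h_2)=A(\Co[\eta],\So[\eta])$ is the polar representation and $\theta\leftrightarrow\eta$; indeed $\langle(h_1,h_2),(\Ci,\Si)\rangle=A(\Co[\eta]\Ci+\So[\eta]\Si)=A$. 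The maximization then collapses to $Af(u_3)+h_3u_3\to\max$ over $u_3\in[-m,M]$, whose value is the constant $H$ and whose maximizers form $\argmax_{v}(Af(v)+h_3v)$; this identifies $A$, $H$, and (when $h_3=0$) $W=\max_{v}f(v)$.

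The first integrals appear next. From $\dot h_3=0$ we get $h_3=\mathrm{const}$. For $A$, I would invoke the dual form of~\eqref{eq:dot_theta_dot_A}: while $(h_1,h_2)\ne0$ one has $\dot A=\dot h_1\Ci+\dot h_2\Si$ and $\dot\eta=(\dot h_2\Co[\eta]-\dot h_1\So[\eta])/A$. Substituting $\dot h_1=-h_3f(u_3)\Si$, $\dot h_2=h_3f(u_3)\Ci$ from~\eqref{eq:heisenberg_adjoint_system} gives $\dot A=0$, while the contact identity $\Co[\eta]\Ci+\So[\eta]\Si=1$ yields
\[
  \dot\eta=\frac{h_3f(u_3)}{A}.
\]
Since $A=\mu_{\Omega^\circ}(h_1,h_2)$ is Lipschitz, nonnegative, and satisfies $\dot A=0$ a.e.\ on $\{A>0\}$, it is locally constant on $\{A>0\}$ and vanishes on $\{A=0\}$; continuity then forces $A$ to be globally constant. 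The nonvanishing condition $A^2+h_3^2\ne0$ is exactly the requirement $h\ne0$ from the PMP.

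The three cases now separate according to the values of the two first integrals. If $A=0$ then $(h_1,h_2)\equiv0$, so $\dot h_1=\dot h_2=0$ force $u_1=u_2=0$ (here $h_3\ne0$), whence $x_1\equiv x_2\equiv0$ and $u_3$ maximizes $h_3u_3$, i.e.\ $u_3=M$ or $u_3=-m$ according to the sign of $h_3$, giving $x_3=Mt$ or $-mt$. If $h_3=0$ then $(h_1,h_2)$ is constant, so $\eta\equiv\eta_0$, $f(u_3)\equiv W$ with $u_3\in\argmax_{v\in[-m,M]}f(v)$, and integrating $\dot x_1=W\Ci$, $\dot x_2=W\Si$ along a measurable selection $\theta(t)\in{}^\circ\eta_0$ produces the stated integral formulas. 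In the principal case $A\ne0$, $h_3\ne0$, I would integrate $\dot x_1=f(u_3)\Ci$ and $\dot x_2=f(u_3)\Si$ with the dual derivative rules $\tfrac{d}{d\eta}\So[\eta]=\Ci$ and $\tfrac{d}{d\eta}\Co[\eta]=-\Si$ together with $\dot\eta=h_3f(u_3)/A$; this immediately gives the closed forms for $x_1,x_2$. Writing $f(u_3)=(H-h_3u_3)/A$ in $\dot\eta$ and integrating yields the formula for $\eta(t)$, equivalently $\int_0^t u_3\,ds=\frac{H}{h_3}t-\frac{A^2}{h_3^2}(\eta(t)-\eta_0)$.

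The main obstacle is the formula for $x_3$, where $\dot x_3=u_3+x_1u_2$. I would rewrite $x_1u_2=-\frac{A^2}{h_3^2}(\So[\eta]-\So[\eta_0])\,\tfrac{d}{dt}\Co[\eta]$ using $u_2=f(u_3)\Si$ and $\dot\eta=h_3f(u_3)/A$, integrate by parts, and reduce all remaining terms to $\eta$-integrals through the product identity $\tfrac{d}{d\eta}(\Co[\eta]\So[\eta])=\Co[\eta]\Ci-\So[\eta]\Si=1-2\So[\eta]\Si$, itself a consequence of the contact relation and the derivative rules. Combining this with the expression for $\int_0^t u_3\,ds$ reproduces the stated $x_3$, including the $\tfrac12$ coefficients. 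Non-uniqueness at corners of $\partial\Omega$ (multivalued $\theta\leftrightarrow\eta$) and along singular faces is absorbed by the measurable-selection framework of~\S\ref{sec:singular_extremals}, so all identities hold for a.e.\ $t$. Finally, the converse is a direct verification: starting from a curve of the stated form with constants $A,h_3,\eta_0$ and the appropriate selections, one sets $(h_1,h_2)=A(\Co[\eta],\So[\eta])$ and the given $h_3$, checks the adjoint equations~\eqref{eq:heisenberg_adjoint_system} by differentiation, and confirms the maximum condition via $\Co[\eta]\Ci+\So[\eta]\Si=1$.
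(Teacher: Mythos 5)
Your proposal is correct and follows essentially the same route as the paper's proof: the same trigonometric substitutions $u=(f(u_3)\Ci,f(u_3)\Si,u_3)$, $(h_1,h_2)=A(\Co[\eta],\So[\eta])$, the same first integrals $A$ and $h_3$ obtained from the dual form of~\eqref{eq:dot_theta_dot_A}, the same three-case split, the same key identity $\int\sin_{\Omega^\circ}\eta\,\sin_\Omega\theta\,d\eta=\tfrac12(\eta-\sin_{\Omega^\circ}\eta\cos_{\Omega^\circ}\eta)+c$, and the same direct verification for the converse. Your two cosmetic variations---identifying $A$ as the support function of $\Omega$ via the bipolar theorem rather than through the contact inequality, and obtaining $x_3$ by integration by parts rather than direct substitution---do not change the substance of the argument.
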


\begin{corol}
    If $f$ is strictly concave on $[-m,M]$, then $u_{3}$ is uniquely determined and constant (in cases~\textup{2} and~\textup{3}).  Hence the trajectories of type~\textup{3} are parametrised by the three parameters $\eta_{0}$, $h_{3}/A$, and~$t$.  If moreover, for some $\eta_{0}$, the set ${}^{\circ}\eta_{0}$ is a singleton, then the trajectories of type~\textup{2} are parametrised (in addition to $\eta_{0}$) by the single parameter~$t$.  When the set ${}^{\circ}\eta_{0}$ is not a singleton, the set
    $\{(\Ci,\Si)\mid\theta\in{}^{\circ}\eta_{0}\}$ is a segment
    $\bigl[\omega_{0},\omega_{1}\bigr]\subset\partial\Omega$, and the trajectories of type~\textup{2} have the form
    \[
        x_{3}=u_{3}t+W\int_{0}^{t}x_{1}(s)\sin_{\Omega}\theta(s)\,ds,
        \qquad
        (x_{1},x_{2})(t)\in Wt\bigl[\omega_{0},\omega_{1}\bigr],
    \]
    where the last integral can be expressed through $\beta(t)$, defined by
    $(x_{1},x_{2})(t)=Wt\bigl(\beta(t)\omega_{0}+(1-\beta(t))\omega_{1}\bigr)$:
    \[
        \begin{aligned}
            x_{3}(t)=\int_0^tu_3\,ds +\;&
            W^{2}\Bigl[
                \cos_{\Omega}\theta_{0}\sin_{\Omega}\theta_{0}\frac{\bigl(\beta(t)t\bigr)^{2}}{2}
                +\cos_{\Omega}\theta_{1}\sin_{\Omega}\theta_{1}\frac{\bigl(1-\beta(t)\bigr)^{2}t^{2}}{2}\\
                &\;+\cos_{\Omega}\theta_{1}\sin_{\Omega}\theta_{0}
                    \Bigl(\frac{t^{2}}{2}-\frac{\bigl(1-\beta(t)\bigr)^{2}t^{2}}{2}-\int_{0}^{t}\beta(s)s\,ds\Bigr)\\
                &\;+\cos_{\Omega}\theta_{0}\sin_{\Omega}\theta_{1}
                    \Bigl(\int_{0}^{t}\beta(s)s\,ds-\frac{\bigl(\beta(t)t\bigr)^{2}}{2}\Bigr)
            \Bigr].
        \end{aligned}
    \]
    Here $\theta_{0},\theta_{1}$ are the angles of the points $\omega_{0},\omega_{1}$ on~$\partial\Omega$, and $\beta(t)$ is an absolutely continuous function with $0\le\beta(t)\le1$ such that $0\leq (\beta(t)t)'\leq 1$.

    If $f$ is not strictly concave, the choice of $u_{3}(t)$ is in general not unique in case~\textup{2}, but is still unique for almost every ratio $A/h_{3}$ in case~\textup{3}.
\end{corol}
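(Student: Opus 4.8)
The plan is to extract everything from the two maximisation conditions already isolated in the theorem. Maximising $\mathcal{H}$ over $(u_1,u_2)$ in the slice $f(v)\Omega$ gives $f(v)\max_{\omega\in\Omega}\langle(h_1,h_2),\omega\rangle=f(v)\mu_{\Omega^\circ}(h_1,h_2)=Af(v)$ (using $f\ge0$), so the residual choice of $u_3$ is the one-dimensional maximisation $u_3\in\argmax_{v\in[-m,M]}(Af(v)+h_3v)$, which reduces to $\argmax_v f(v)$ when $h_3=0$ (case~2). Since $A$ and $h_3$ are first integrals, the function $\Phi(v)=Af(v)+h_3v$ is fixed along an extremal. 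In cases~2 and~3 one has $A\neq0$, hence $A>0$, so if $f$ is strictly concave then $\Phi$ is strictly concave and therefore has a unique maximiser $v^*$; as $\Phi$ is independent of $t$, this $v^*$ is a single constant, which proves that $u_3$ is uniquely determined and constant. The converse assertion is the remark that, writing $c=h_3/A$ and $\Phi=A(f(v)+cv)$, non-uniqueness of the maximiser forces $f$ to be affine with slope $-c$ on a nondegenerate subinterval; a concave function has at most countably many maximal affine pieces, hence at most countably many such slopes, so $u_3$ is unique for almost every ratio $h_3/A$ in case~3, whereas in case~2 the relevant slope is $0$, which may be attained on an interval, so $u_3$ need not be unique.

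For the parameter count of the type-3 trajectories I would substitute the constant $u_3=v^*$ into $\eta(t)=\eta_0+\frac{Hh_3}{A^2}t-\frac{h_3^2}{A^2}\int_0^t u_3\,ds$ and simplify with $H=Af(v^*)+h_3v^*$, obtaining $\eta(t)=\eta_0+\frac{h_3f(v^*)}{A}t$. Because $v^*=\argmax_v(f(v)+\frac{h_3}{A}v)$ depends only on the ratio $h_3/A$, so do $f(v^*)$ and each of the coefficients $A/h_3$, $A^2/h_3^2$, $H/h_3$ occurring in the case-3 formulas; hence the whole trajectory is a function of $\eta_0$, the ratio $h_3/A$, and $t$ alone.

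The type-2 claims concern the freedom in the horizontal control $\theta(t)\in{}^\circ\eta_0$, which is dictated by the shape of $\Omega$ rather than by $f$. If ${}^\circ\eta_0$ is a singleton, then $\theta(t)$ equals the constant $\theta_0$, and under strict concavity $u_3=v^*$ and $W=\max f$ are fixed numbers, so the case-2 formulas become explicit functions of $\eta_0$ and $t$ only. If ${}^\circ\eta_0$ is not a singleton, I would first identify $\{(\cos_\Omega\theta,\sin_\Omega\theta)\mid\theta\in{}^\circ\eta_0\}$ with the face of $\Omega$ cut out by the support direction $(\cos_{\Omega^\circ}\eta_0,\sin_{\Omega^\circ}\eta_0)$; a proper face of a planar convex body is a point or a segment, and since ${}^\circ\eta_0$ is not a singleton it is a segment $[\omega_0,\omega_1]$. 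Along this face $\eta\equiv\eta_0$, so by $(\cos_\Omega\theta)'=-\sin_{\Omega^\circ}\eta_0$ and $(\sin_\Omega\theta)'=\cos_{\Omega^\circ}\eta_0$ the map $\theta\mapsto(\cos_\Omega\theta,\sin_\Omega\theta)$ is affine, and I may write $(\cos_\Omega\theta(s),\sin_\Omega\theta(s))=\gamma(s)\omega_0+(1-\gamma(s))\omega_1$ for a measurable $\gamma(s)\in[0,1]$. Integrating the horizontal velocity then gives $(x_1,x_2)(t)=Wt(\beta(t)\omega_0+(1-\beta(t))\omega_1)$ with $\beta(t)\,t=\int_0^t\gamma(s)\,ds$, so that $g:=\beta\,t$ is absolutely continuous with $g'=\gamma\in[0,1]$, which is precisely the constraint $0\le(\beta t)'\le1$; absolute continuity of $\beta$ and $0\le\beta\le1$ follow from $\beta=g/s$ with $0\le g\le s$.

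It remains to produce the formula for $x_3$, and this is where I expect the real work to be. I would substitute $x_1(s)=W(\cos_\Omega\theta_0\,g(s)+\cos_\Omega\theta_1(s-g(s)))$ and $\sin_\Omega\theta(s)=\gamma(s)\sin_\Omega\theta_0+(1-\gamma(s))\sin_\Omega\theta_1$ into $x_3(t)=\int_0^t u_3\,ds+W\int_0^t x_1(s)\sin_\Omega\theta(s)\,ds$, expand the product into the four bilinear terms with coefficients $\cos_\Omega\theta_i\sin_\Omega\theta_j$, and evaluate each by integration by parts, using $g'=\gamma$ together with the primitives $\int_0^t gg'\,ds=\frac12 g(t)^2$ and $\int_0^t(s-g)(1-g')\,ds=\frac12(t-g(t))^2$; the two mixed terms additionally require $\int_0^t g\,ds=\int_0^t\beta(s)s\,ds$ and $\int_0^t s g'\,ds=tg(t)-\int_0^t g\,ds$. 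Rewriting the outcome with $g(t)=\beta(t)t$ yields exactly the four-term expression in the statement. This bookkeeping is routine but delicate, and matching the signs and the three ways the quadratic terms recombine is the main obstacle; everything else is immediate from the maximum condition and the structure of $\Omega$.
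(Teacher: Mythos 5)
Your proposal is correct and follows essentially the same route as the paper: the paper's own proof of the corollary is just the remark that it ``follows readily from Theorem~2,'' and the substantive setup you use — the constancy and uniqueness of $u_3$ from strict concavity of $Af(v)+h_3v$ with $A,h_3$ first integrals, the identification of $\{(\Ci,\Si)\mid\theta\in{}^{\circ}\eta_0\}$ with a face-segment $[\omega_0,\omega_1]$, and the parametrisation $(x_1,x_2)(t)=Wt\bigl(\beta(t)\omega_0+(1-\beta(t))\omega_1\bigr)$ with $\beta(t)=\frac1t\int_0^t\alpha(s)\,ds$ — is exactly what appears in the type-2 part of the paper's proof of Theorem~2. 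Your integration-by-parts evaluation of the four bilinear terms (via $g=\beta t$, $g'=\gamma$, $\int_0^t gg'\,ds=\tfrac12 g(t)^2$, $\int_0^t sg'\,ds=tg(t)-\int_0^t g\,ds$) is precisely the bookkeeping the paper defers to the reader, and it reproduces the stated formula for $x_3$ correctly.
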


This corollary follows readily from Theorem~2.  For example, if the set $\Omega$ is strictly convex and the function~$f$ is strictly concave, then each initial value of the adjoint multiplier corresponds to a unique extremal.

\begin{proof}[Proof of Theorem 2]\ \\[-0.5\baselineskip]

As mentioned above, extremals are solutions of the PMP with the Pontryagin
function
\[
    \mathcal{H}(q,p,u):=\langle p,q\!\cdot\!u\rangle
        =\langle dL_q^{*}p,u\rangle .
\]
After the substitution $h=dL_q^{*}p$, the PMP equations reduce to
system~\eqref{eq:heisenberg_adjoint_system}.  To solve that system
explicitly we make the substitutions
\[
    u_1=f(u_3)\Ci[\theta],\qquad
    u_2=f(u_3)\Si[\theta],\qquad
    h_1=A\Co[\eta],\qquad
    h_2=A\So[\eta].
\]
Since $\dot h_3=0$, the component $h_3$ is constant.

The maximisation condition now reads
\[
    H=Af(u_3)\bigl(\Ci[\theta]\Co[\eta]+\Si[\theta]\So[\eta]\bigr)+h_3u_3
        \longrightarrow\max_{u_3,\theta}.
\]

Using~\eqref{eq:dot_theta_dot_A} and the definitions, we have $A(t)\ge0$.
From~\eqref{eq:heisenberg_adjoint_system} it follows that if
$A(t_0)\neq0$ at some time $t_0$, then in a neighbourhood of~$t_0$
\[
    \dot A=-h_3u_2\cos_{\Omega}\theta+h_3u_1\sin_{\Omega}\theta=0.
\]
Hence $A(t)$ is constant near any such $t_0$, and by continuity
$A(t)\equiv\text{const}\ge0$ for all~$t$.

\medskip
\noindent\emph{Case $A=0$ (extremals of type~1).}
Here we have a singular control: when $h_1=h_2=0$ the Hamiltonian
in~\eqref{eq:heisenberg_adjoint_system} attains its maximum for any
$u_1,u_2$.  If $h_1(\tau)=h_2(\tau)=0$ at some instant~$\tau$ then
$A(\tau)=0$, and since $A$ is constant we get $A(t)\equiv0$ and
$h_1(t)=h_2(t)=0$ for all~$t$.  By the PMP, $h_3(t)\neq0$, so the first
two equations of~\eqref{eq:heisenberg_adjoint_system} force
$u_1(t)=u_2(t)=0$.  The last condition in
\eqref{eq:heisenberg_adjoint_system} gives $u_3(t)\equiv M$ if $h_3>0$
and $u_3(t)\equiv-m$ if $h_3<0$.  Thus we obtain two singular extremals,
namely the two vertical rays.

\medskip
\noindent\emph{Let now $A>0$.}
From~\eqref{eq:dot_theta_dot_A} and
\eqref{eq:heisenberg_adjoint_system}
\[
    \dot\eta=\frac{h_3u_1\cos_{\Omega^\circ}\eta
                 +h_3u_2\sin_{\Omega^\circ}\eta}{A}
             =\frac{h_3}{A}\,f(u_3).
\]
If $f$ is strictly concave, the maximum of
$h_1u_1+h_2u_2+h_3u_3=Af(u_3)+h_3u_3$ with respect to $u_3$ is attained
at a unique point; since $A$ and $h_3$ are constant, $u_3$ is then
constant and $\eta$ is linear in~$t$.  If $f$ is not strictly concave,
the maximiser may be non-unique, but the PMP system can still be
integrated and explicit formulas obtained.

\medskip
\noindent\textbf{Extremals of type 2 ($h_3=0$).}
Because $\Ci[\theta]\Co[\eta]+\Si[\theta]\So[\eta]\le1$ with equality
only when $\theta\in{}^{\circ}\eta$, the Hamiltonian is strictly smaller
than $\max_{v\in[-m,M]}Af(v)$ unless $\theta\in{}^{\circ}\eta$.  Hence,
for $A>0$ and $h_3=0$, the maximum is attained iff
$\theta\in{}^{\circ}\eta$.  In this case $h_3\equiv0$, so
$h_1,h_2,\eta$ are constant: $\eta(t)\equiv\eta_0$.  The measurable
function $\theta(t)\in{}^{\circ}\eta_0$ may be chosen arbitrarily, while
$u_3(t)\in\argmax_{v\in[-m,M]}f(v)$.  Thus
\[
    \dot x_1=W\cos_{\Omega}\theta(t),\qquad
    \dot x_2=W\sin_{\Omega}\theta(t),\qquad
    \dot x_3=u_3(t)+x_1(t)u_2(t),
\]
where $W=\max_{v\in[-m,M]}f(v)$.  In other words, when $h_3=0$ the
control $u=(u_1,u_2,u_3)$ at each time $t$ can be taken from the fixed
face $F$ of the control set $U$ determined by the horizontal covector
$(\cos_{\Omega^\circ}\eta_0,\sin_{\Omega^\circ}\eta_0,0)$.

Because
\[
    \bigl\{(\Ci,\Si)\mid\theta\in{}^{\circ}\eta_0\bigr\}
        =[\omega_0,\omega_1]\subset\partial\Omega,\qquad
    \argmax_{[-m,M]}\bigl(Af(v)+cv\bigr)=[u_3^0,u_3^1],
\]
one can describe the reachable set explicitly.  At almost every~$t$
\[
    \bigl(\dot x_1,\dot x_2\bigr)
        =W\bigl(\alpha(t)\,\omega_0+(1-\alpha(t))\,\omega_1\bigr),\quad
    \dot x_3=\gamma(t)u_3^0+(1-\gamma(t))u_3^1
              +Wx_1(t)\sin_{\Omega}\theta(t),
\]
where $\alpha(t),\gamma(t)\in[0,1]$.  With the initial condition
$(x_1,x_2,x_3)(0)=(0,0,0)$ we have
\[
    (x_1,x_2)(t)\in Wt[\omega_0,\omega_1],\quad
    (x_1,x_2)(t)=Wt\bigl(\beta(t)\omega_0+(1-\beta(t))\omega_1\bigr),
\]
where $\displaystyle
    \beta(t)=\frac1t\int_0^t\alpha(s)\,ds\in[0,1]$.
The corresponding $x_3(t)$ is obtained by substituting these expressions
into the integral formula; see Corollary~1 for details.

\medskip
\noindent\textbf{Extremals of type 3 ($A\neq0$, $h_3\neq0$).}
Again the maximum is achieved iff $\theta\in{}^{\circ}\eta$.  Let
$\eta(0)=\eta_0\in\mathbb{R}$.  Then
\begin{align}
    x_1(t)
        &=\int_0^tf(u_3)\Ci\,ds
         =\int_0^t\frac{A}{h_3}\Ci\,\dot\eta\,ds
         =\frac{A}{h_3}\bigl(\So[\eta(t)]-\So[\eta_0]\bigr),
        \label{eq:Heisenberg_x1_en}\\[4pt]
    x_2(t)
        &=\int_0^tf(u_3)\Si\,ds
         =\int_0^t\frac{A}{h_3}\Si\,\dot\eta\,ds
         =\frac{A}{h_3}\bigl(-\Co[\eta(t)]+\Co[\eta_0]\bigr).
        \label{eq:Heisenberg_x2_en}
\end{align}
For $x_3$ we write
\[
    x_3(t)=\int_0^t\!\bigl(u_3+x_1u_2\bigr)\,ds
           =\!\int_0^t\!\Bigl(u_3
             +\frac{A}{h_3}\bigl(\So[\eta]-\So[\eta_0]\bigr)
              f(u_3)\Si\Bigr)\,ds.
\]
Since $\dot\eta=(h_3/A)f(u_3)$, we need the integral
\[
    I=\int\sin_{\Omega^\circ}\eta\,\sin_{\Omega}\theta\,d\eta
      =\tfrac12\bigl(\eta-\sin_{\Omega^\circ}\eta\cos_{\Omega^\circ}\eta\bigr)+c_1.
\]

Let $H=\displaystyle\max_{v\in[-m,M]}\bigl(Af(v)+h_3v\bigr)$.
Because $Af(u_3(t))+h_3u_3(t)=H$ for a.e.~$t$ and
$-\tfrac{h_3}{A}\in\partial f(u_3)$, we have
$f(u_3(t))=\dfrac{H}{A}-\dfrac{h_3}{A}\,u_3(t)$.  Hence
\[
    \eta(t)-\eta_0
        =\frac{h_3}{A}\int_0^tf(u_3)\,ds
        =\frac{h_3H}{A^{2}}\,t-\frac{h_3^{2}}{A^{2}}\int_0^tu_3\,ds,
\]
so that
\[
    \int_0^tu_3\,ds
        =\frac{H}{h_3}\,t-\frac{A^{2}}{h_3^{2}}\bigl(\eta(t)-\eta_0\bigr).
\]
Substituting, we obtain
\begin{equation}
\label{eq:Heisenberg_x3_en}
\begin{aligned}
    x_3(t)=\;
        &\frac{H}{h_3}\,t
        -\frac12\frac{A^{2}}{h_3^{2}}\bigl(\eta(t)-\eta_0\bigr)\\
        &+\frac12\frac{A^{2}}{h_3^{2}}\Bigl(
            2\cos_{\Omega^\circ}\eta(t)\sin_{\Omega^\circ}\eta_0
            -\cos_{\Omega^\circ}\eta(t)\sin_{\Omega^\circ}\eta(t)
            -\cos_{\Omega^\circ}\eta_0\sin_{\Omega^\circ}\eta_0
        \Bigr).
\end{aligned}
\end{equation}

Thus, when $A\neq0$ and $h_3\neq0$, formulas
\eqref{eq:Heisenberg_x1_en}–\eqref{eq:Heisenberg_x3_en} describe the
extremal parametrically by~$\eta$.  Equations
\eqref{eq:Heisenberg_x1_en}–\eqref{eq:Heisenberg_x2_en} contain
$\eta(t)$ but not $t$ explicitly, so $(x_1,x_2)$ moves along the
boundary of the polar set~$\Omega^\circ$, rotated by $\pi/2$ and scaled
by $A/h_3$, independently of the particular choice of~$\eta(t)$.  The
coordinate $x_3(t)$ depends explicitly on both $t$ and $\eta(t)$.  The
parameter $\eta(t)$ is determined by
\[
    \eta(t)=\eta_0+\int_0^t\frac{h_3}{A}f(u_3)\,ds,
\]
where $u_3(t)$ must maximise $Af(v)+h_3v$ over $v\in[-m,M]$.  If $f$ is
not strictly concave, the maximiser may be an interval, and any
measurable $u_3(t)$ with values in that interval yields an extremal.
When $f$ is strictly concave, the maximiser is unique, so
$u_3(t)\equiv\text{const}$ and $\eta(t)$ is linear in~$t$.  In
particular, for almost every (indeed, for all when $f$ is strictly
concave) ratio $A/h_3$, the parametrisation $\eta(t)$ is uniquely
determined by~$t$.

\medskip
\noindent\textbf{Reverse implication.}
Let arbitrary constants $A\ge0$, $h_3\in\mathbb{R}$ be given, not both
zero.  If $A=0$ and $\eta_0\in\mathbb{R}$, with an arbitrary measurable
$\theta(t)\in{}^{\circ}\eta_0$, then $h_1\equiv h_2\equiv0$ solves the
vertical part of the Hamiltonian system, while $u_3$ is fixed by the
sign of $h_3$ through $h_3u_3\to\max$.  If $h_3=0$ and $A>0$, put
$(h_1,h_2)=A\bigl(\Co[\eta_0],\So[\eta_0]\bigr)$; the vertical equations
are then satisfied.

Finally, let $A>0$, $h_3\neq0$, $\eta_0\in\mathbb{R}$, and choose any
measurable
\[
    u_3(t)\in\argmax_{v\in[-m,M]}\bigl(Af(v)+h_3v\bigr).
\]
Define $\eta(t)$ as in item~3 of the theorem and set
$(h_1,h_2)=A\bigl(\Co[\eta],\So[\eta]\bigr)$.  Since
$\dot\eta=(h_3/A)f(u_3)$,
\[
    \dot h_1=-A\dot\eta\,\Si=-h_3f(u_3)\Si,\qquad
    \dot h_2= A\dot\eta\,\Ci= h_3f(u_3)\Ci,
\]
with $\theta(t)\in{}^{\circ}\eta(t)$.  The controls corresponding to the
trajectory given in item~3 are
\[
    u_1=\dot x_1=f(u_3)\Ci,\qquad
    u_2=\dot x_2=f(u_3)\Si.
\]
Hence
\[
    \dot h_1=-h_3u_2,\qquad
    \dot h_2=h_3u_1,\qquad
    \max_{v\in U}\bigl(h_1v_1+h_2v_2+h_3v_3\bigr)
        =h_1u_1+h_2u_2+h_3u_3,
\]
the last equality holding by construction of $u_3(t)$ and the relation
$\theta(t)\in{}^{\circ}\eta(t)$.  Therefore every trajectory described
in the theorem is the projection of a solution to the Pontryagin maximum
principle.

\end{proof}

\section{Unit balls of anti-norms}
\label{sec:antinorms_and_balls}

The structure of the unit balls of ordinary norms is well known: every norm on a finite–dimensional space determines a unit ball, which is a (centrally symmetric) convex compact set containing the origin in its interior.  Conversely, every such set determines a unique norm (its Minkowski functional) for which it is the unit ball.  Anti-norms and the structure of their unit balls are not as widely known (though they have been studied by various authors, see, e.g.,~\cite{Protasov} and the references therein).  We shall use the notion of an \emph{anti-norm} from Definition~\ref{defn: antinorm} given earlier.

Denote by $U_\nu\subset\mathbb{R}^n$ the unit ball of an anti-norm~$\nu$:
\[
    U_\nu=\{\xi\in\mathbb{R}^n\mid\nu(\xi)\ge1\}.
\]

\begin{lemma}
\label{lm:antinorm_ball}
    Let\/ $\nu$ be an anti-norm on\/ $\mathbb{R}^n$.  Then
    \begin{enumerate}
        \item the unit ball $U_\nu$ is a non-empty closed convex set;
        \item $U_\nu$ enjoys the \emph{ray property:} $\lambda U_\nu\subset U_\nu$ for every $\lambda\ge1$;
        \item $0\notin U_\nu$.
    \end{enumerate}

    Conversely, if a set $U\subset\mathbb{R}^n$ satisfies the three properties above, then the function\/ $\nu_U:\mathbb{R}^n\to\mathbb{R}\cup\{-\infty\}$,
    \[
        \nu_U=\operatorname{cl}\,\mathring{\nu}_U,
        \qquad
        \mathring{\nu}_U(\xi)=\sup\{\lambda>0\mid\xi\in\lambda U\},
    \]
    is an anti-norm\footnote{As usual we adopt the convention $\sup\varnothing=-\infty$.}.

    Moreover, in these cases the transformations $\nu\mapsto U_\nu$ and $U\mapsto\nu_U$ are mutually inverse:
    \[
        \nu_{U_\nu}=\nu,
        \qquad
        U_{\nu_U}=U.
    \]
\end{lemma}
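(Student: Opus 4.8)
The plan is to prove the lemma in two halves: first the three properties of $U_\nu$ for a given anti-norm $\nu$, then the converse together with the mutual-inverse statement. The forward direction should be almost immediate from Definition~\ref{defn: antinorm}. For closedness and convexity, I would note that $U_\nu = \{\xi \mid \nu(\xi)\ge 1\}$ is a superlevel set of the concave closed function $\nu$; superlevel sets of concave functions are convex, and superlevel sets of upper-semicontinuous functions are closed, so both follow directly from property~(1) of the definition. Non-emptiness follows from property~(3): since $\dom\nu\ne\varnothing$ and $\nu>0$ on $\mathrm{ri}\,\dom\nu$, pick any $\xi_0$ there with $\nu(\xi_0)=c>0$; by positive homogeneity $\nu(\xi_0/c)=1$, so $\xi_0/c\in U_\nu$. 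The ray property is again homogeneity: if $\nu(\xi)\ge 1$ and $\lambda\ge 1$, then $\nu(\lambda\xi)=\lambda\nu(\xi)\ge\lambda\ge 1$. Finally $0\notin U_\nu$ because $\nu(0)=0\cdot\nu(\xi)=0<1$ by homogeneity.

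For the converse, given $U$ satisfying the three properties, I would verify that $\nu_U=\operatorname{cl}\mathring\nu_U$ is an anti-norm by checking the three defining properties. Closedness is automatic since $\nu_U$ is defined as a closure. Positive homogeneity of $\mathring\nu_U$ is a direct computation from $\mathring\nu_U(\xi)=\sup\{\lambda>0\mid\xi\in\lambda U\}$, and homogeneity is preserved under closure. Concavity of $\mathring\nu_U$ is where the ray property enters essentially: I would show that the hypograph of $\mathring\nu_U$ is a convex cone. The ray property $\lambda U\subset U$ for $\lambda\ge 1$ is exactly what makes $\{(\xi,\lambda)\mid \lambda>0,\ \xi\in\lambda U\}$ convex — it guarantees that the sets $\lambda U$ are nested appropriately so that the region below the graph is convex. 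Concavity then passes to the closure $\nu_U$. The nonnegativity/positivity condition~(3) requires checking that $\mathring\nu_U$ is positive somewhere and that taking the closure does not destroy positivity on the relative interior of the domain; here I would use $0\notin U$ to rule out the degenerate case and standard convex-analysis facts (a closed proper concave function agrees with its original on the relative interior of the domain) to guarantee $\nu_U>0$ on $\mathrm{ri}\,\dom\nu_U$.

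For the mutual-inverse identities, I would first establish the pivotal fact stated in the running text: $\xi\in U$ iff $\nu_U(\xi)\ge 1$, valid precisely because $U$ has the ray property. Granting this, $U_{\nu_U}=\{\xi\mid\nu_U(\xi)\ge 1\}=U$ is immediate. For $\nu_{U_\nu}=\nu$, I would argue that $U_\nu$ inherits the ray property (already proved in the forward direction), apply the same pivotal fact with $U=U_\nu$ to get $\mathring\nu_{U_\nu}$ and $\nu$ sharing the same unit ball, and then invoke positive homogeneity plus closedness to conclude the two functions coincide everywhere: two closed positively-homogeneous functions with the same superlevel set $\{\ge 1\}$ must be equal, by scaling each ray to the level-$1$ crossing.

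\textbf{The main obstacle} I anticipate is proving concavity of $\mathring\nu_U$ — more precisely, showing that the ray property is both necessary and sufficient for $\operatorname{cl}\mathring\nu_U$ to be concave with the correct unit ball. The delicate point is the interaction between the closure operation and the boundary behaviour: $\mathring\nu_U$ may fail to be upper semicontinuous (as the worked Example with $\{x^2-y^2\ge 1\}$ shows, where $\mathring\nu_U$ jumps to $-\infty$ on the boundary cone), so I must be careful that passing to the closure neither enlarges the superlevel set $\{\nu_U\ge 1\}$ beyond $U$ nor shrinks it. Controlling this requires using the ray property to show that points with $\mathring\nu_U(\xi)=1$ already lie in the closed set $U$, so that the limsup defining the closure cannot push a point with $\mathring\nu_U<1$ up to the level $1$ unless it genuinely belongs to $U$. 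I expect this boundary analysis, rather than the algebraic homogeneity checks, to be where the real content of the lemma lies.
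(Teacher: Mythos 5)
Your plan mirrors the paper's proof step for step: the forward direction (superlevel set of a closed concave function, homogeneity for the ray property, $\nu(0)=0<1$), the hypograph-as-cone argument for concavity of $\mathring\nu_U$, and the limiting argument showing that the closure operation cannot enlarge the unit ball are exactly the paper's steps. However, two of your claims are wrong as stated and need repair.

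First, you mis-locate where the ray property is needed. Concavity of $\mathring\nu_U$ does \emph{not} require it: the paper identifies the hypograph of $\mathring\nu_U$ with $\bigcup_{\lambda>0}\lambda\bigl(U\times(-\infty,1]\bigr)$, a cone over the convex set $U\times(-\infty,1]$, and such a cone is convex for \emph{any} convex closed $U$ with $0\notin U$; closedness of $U$ enters only to guarantee that the supremum defining $\mathring\nu_U$ is attained, so that this union really is the hypograph. (Note also that the set you write, $\{(\xi,\lambda)\mid\lambda>0,\ \xi\in\lambda U\}$, is not the hypograph: it omits all points of height $\le 0$, so you should work with the paper's cone.) Where the ray property is genuinely indispensable is the recovery $U_{\nu_U}=U$: for the unit disc centred at $(2,0)$, which is convex, closed and avoids the origin but has no ray property, $\nu_U$ is still a perfectly good anti-norm, yet its unit ball is strictly larger than $U$ (it contains $(4,0)$, for instance). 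Your "pivotal fact" paragraph treats that recovery correctly, so this is a mis-attribution rather than a fatal error, but it should be fixed precisely because you advertise it as the essential use of the hypothesis.

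Second, and more seriously, your final step for $\nu_{U_\nu}=\nu$ rests on a false principle: it is \emph{not} true that two closed positively homogeneous functions with the same superlevel set $\{\,\cdot\ge1\,\}$ must coincide. Take $f_1(x,y)=x$ for $x\ge0$ with $f_1=-\infty$ for $x<0$, versus $f_2(x,y)=x$ everywhere: both are closed, positively homogeneous (indeed concave), both have unit ball $\{x\ge1\}$, and they differ on the half-plane $x<0$. Scaling along rays determines a function only on the cone $\bigcup_{\lambda>0}\lambda U$, where it is positive; on rays that never meet $U$ the function may be $0$, negative, or $-\infty$, and the unit ball carries no information there. This is exactly the boundary behaviour you yourself flag as the main obstacle (for the hyperbola, $\nu$ equals $0$ on the light cone and $-\infty$ outside it). The repair is the fact you cite elsewhere in your proposal: property~3 of Definition~\ref{defn: antinorm} forces $\operatorname{ri}\dom\nu\subset\{\nu>0\}=\bigcup_{\lambda>0}\lambda U_\nu$ (the function $f_2$ above violates this property and so is not an anti-norm), hence ray scaling does recover $\nu$ on $\operatorname{ri}\dom\nu$, giving $\mathring\nu_{U_\nu}=\nu$ there with the same relative interior of domain; since a closed concave function equals the closure of its restriction to the relative interior of its domain, $\nu_{U_\nu}=\operatorname{cl}\mathring\nu_{U_\nu}=\nu$. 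This is precisely the paper's one-line argument, and without it your proof of $\nu_{U_\nu}=\nu$ does not close.
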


\begin{proof}
    The set $U_\nu$ is the image under the vertical projection
    $\mathbb{R}^n\times\mathbb{R}\to\mathbb{R}^n$, $(\xi,a)\mapsto\xi$, of the
    intersection of the hypograph of $\nu$ with the half-space $a\ge1$.
    Both the hypograph and the half-space are convex, hence their
    intersection is convex, and convexity is preserved under the linear
    map $(\xi,a)\mapsto\xi$.  Closedness of $U_\nu$ follows from the upper
    semicontinuity of $\nu$.  Since $\operatorname{dom}\nu\neq\varnothing$,
    we have $\operatorname{ri}\operatorname{dom}\nu\neq\varnothing$, whence
    there exists $\xi_0$ with $\nu(\xi_0)>0$.  By positive homogeneity this
    implies $U_\nu\neq\varnothing$.  Thus property~1 holds.  Property~2
    follows directly from positive homogeneity: if $\xi\in U_\nu$ and
    $\lambda\ge1$ then $\nu(\lambda\xi)=\lambda\nu(\xi)\ge1$.  For
    property~3 note that $\nu(0)=\limsup_{\xi\to0}\nu(\xi)\ge0$ (because
    $\nu$ is closed) and $\nu(\lambda0)=\lambda\nu(0)=0$ for all
    $\lambda>0$, so $\nu(0)=0$.

    \medskip
    Conversely, $\mathring{\nu}_U$ is clearly positively homogeneous.
    Its hypograph is
    \[
        \bigl\{(\xi,a)\;\bigm|\;\exists\lambda>0:\ \xi\in\lambda U,\ a\le\lambda\bigr\}
        =\bigcup_{\lambda>0}\lambda\bigl(U\times(-\infty,1]\bigr),
    \]
    a cone over the convex set $U\times(-\infty,1]$, hence itself convex;
    so $\mathring{\nu}_U$ is concave.  Its closure $\nu_U$
    is therefore a closed concave positively homogeneous function.  As
    $\operatorname{ri}\operatorname{dom}\nu_U
          =\operatorname{ri}\operatorname{dom}\mathring{\nu}_U$
    and $\mathring{\nu}_U>0$ on its domain, $\nu_U$ is an anti-norm.

    \medskip
    Finally, $\nu_{U_\nu}=\nu$ because on
    $\operatorname{ri}\operatorname{dom}\nu$ we have
    $\mathring{\nu}_{U_\nu}(\xi)=\nu(\xi)$ and the domains coincide.  For
    the other equality observe that
    $\xi\in U\Rightarrow\mathring{\nu}_U(\xi)\ge1\Rightarrow\nu_U(\xi)\ge1$,
    so $U\subset U_{\nu_U}$.  Conversely, if $\xi\in U_{\nu_U}$ then there
    exist $\xi_k\to\xi$ with $\mathring{\nu}_U(\xi_k)\to\lambda\ge1$, so
    $\xi_k/\lambda_k\in U$ and hence $\xi/\lambda\in U$; by the ray
    property $\xi\in U$.
\end{proof}

\section{Convex trigonometry for anti-norms}
\label{sec:convex_trigonometry}

In Section~\ref{sec:Heisenberg_Finsler} convex trigonometric functions
were used to obtain explicit geodesics in a Finsler problem on the
Heisenberg group.  For sub-Lorentzian problems it is convenient to
express geodesics through functions $\cosh_\Omega$ and $\sinh_\Omega$,
generalising the hyperbolic functions $\cosh$ and $\sinh$ from the
classical hyperbola to arbitrary unbounded convex sets.  The theory of
the convex trigonometric functions $\cos_\Omega$ and $\sin_\Omega$ was
developed in~\cite{Lokut2019} and exploited in many sub-Finsler problems
in~\cite{ALS2021}.  In this section we construct the apparatus of the
functions $\cosh_\Omega$ and $\sinh_\Omega$.

Recall the basic properties of $\cosh$ and $\sinh$:
\[
    \cosh^{2}\theta-\sinh^{2}\theta=1,\quad
    \frac{d}{d\theta}\cosh\theta=\sinh\theta,\quad
    \frac{d}{d\theta}\sinh\theta=\cosh\theta.
\]
Fixing $\theta_{0}>0$, the area bounded by the ray
$\mathbb{R}_{+}(1,0)=\{(x,0)\mid x>0\}$, the hyperbolic arc
$\{(\cosh\theta,\sinh\theta)\mid0<\theta<\theta_{0}\}$, and the segment
$\{\lambda(\cosh\theta_{0},\sinh\theta_{0})\mid0\le\lambda\le1\}$ equals
$\tfrac12\theta_{0}$.

We shall define $\cosh_\Omega,\sinh_\Omega$ so as to preserve these
properties as far as possible.

\subsection{Antipolar sets}

For convenience we keep the variables\footnote{Here $\R^{2*}$ denotes
the space of row–vectors dual to the vector space $\R^2$}
\((x,y)\in\R^{2}\) and \((p,q)\in\R^{2*}\).
By saying that a set \(\Omega\subset\R^{2}\) is
\emph{strictly separated from the origin} we mean that there exist
constants \(A,B\in\R\) such that \(Ax+By\ge1\) for every
\((x,y)\in\Omega\).

\begin{assmp}
\label{main_assumption}
Let $\Omega\subset\R^{2}$.
\begin{enumerate}
\item[(i)]
$\Omega$ is non-empty, convex and closed, does not contain the origin,
and satisfies the \emph{ray property}
\[
    \forall\lambda>1\qquad \lambda\Omega\subset\Omega.
\]
\end{enumerate}
\end{assmp}
\label{main assumptions i * **}
Throughout the paper we shall often assume, in addition to
\hyperref[main_assumption]{(i)}, one or both of the following related
properties.

\begin{assmp}
\label{dual_assmps}
\begin{enumerate}
\item[$(*)$]
No ray issued from the origin lies on\/ $\partial\Omega$:
\[
    \forall\lambda>1\qquad \lambda\Omega\cap\partial\Omega=\varnothing.
\]
\item[$(**)$]
The boundary\/ $\partial\Omega$ comes arbitrarily close to the two
boundary rays \(l_{0},l_{1}\) of the closed cone
\(C=\overline{\R_{+}\Omega}\):
\[
    \partial C=l_{0}\cup l_{1},\qquad
    \operatorname{dist}(l_{0},\Omega)=\operatorname{dist}(l_{1},\Omega)=0.
\]
\end{enumerate}
\end{assmp}

When a set $\Omega$ satisfies one of the above assumptions we shall say
that $\Omega$ \emph{has property} \hyperref[main_assumption]{(i)},
\hyperref[dual_assmps]{($*$)} or \hyperref[dual_assmps]{($**$)}
respectively.

If $\nu$ is an antinorm, its unit ball satisfies
\hyperref[main_assumption]{(i)} by
Lemma~\ref{lm:antinorm_ball}.  
It need not satisfy \hyperref[dual_assmps]{($*$)} or
\hyperref[dual_assmps]{($**$)}, although each of those can be reformulated
in terms of $\nu$.  For instance,
\hyperref[dual_assmps]{($*$)} is equivalent to
\(\nu(\xi)=0\) for every
\(\xi\in\dom\nu\setminus\operatorname{ri}\dom\nu\).

\begin{defn}
\label{defn: antipolar_set}
The \emph{antipolar} of a set\/ $\Omega\subset\R^{2}$ is
\[
    \Omega^{\diamond}=\bigl\{(p,q)\in\R^{2*}\mid
        px-qy\ge1 \;\; \forall(x,y)\in\Omega\bigr\}.
\]
\end{defn}

We write the term $-qy$ (rather than $+qy$) in the definition because of
the following example.  Let
\[
    \Omega_{2}
      =\bigl\{(x,y)\mid x^{2}-y^{2}\ge1,\;x>0\bigr\}
      =\bigl\{\lambda(\cosh\theta,\sinh\theta)\mid \lambda\ge1,\ \theta\in\R\bigr\}.
\]
Its antipolar in $(p,q)$–coordinates is
\[
    \Omega_{2}^{\diamond}
      =\bigl\{(p,q)\mid p^{2}-q^{2}\ge1,\;p>0\bigr\}
      =\bigl\{\lambda(\cosh\eta,\sinh\eta)\mid \lambda\ge1,\ \eta\in\R\bigr\},
\]
which coincides with $\Omega_{2}$ both under the Euclidean identification
$(x,y)\mapsto(x,y)$ and under the Lorentzian identification
$(x,y)\mapsto(x,-y)$.  
For $(x,y)=(\cosh\theta,\sinh\theta)\in\partial\Omega_{2}$ the supporting
covector is $(\cosh\theta,-\sinh\theta)$, i.e.\ $(p,q)=(\cosh\eta,\sinh\eta)$
with $\eta=\theta$.  Retaining the minus sign is therefore convenient.
Whenever we write $\omega^{\diamond}=(p,q)\in\Omega^{\diamond}$ we
always mean the coordinates given by
Definition~\ref{defn: antipolar_set}.

\medskip
We now state the main result of the subsection.

\begin{theorem}[Bipolar theorem]
\label{theorem:biantipolar}
Let\/ $\varnothing\ne\Omega\subset\R^{2}$.
\begin{enumerate}
\item
$\Omega$ is \emph{not} strictly separated from the origin
iff\/ $\Omega^{\diamond}=\varnothing$.
\item
If $\Omega$ \emph{is} strictly separated from the origin, then
$\Omega^{\diamond}$ satisfies
Assumption~\ref{main_assumption}\,\hyperref[main_assumption]{(i)} and
\[
    \Omega^{\diamond\diamond}
      =\operatorname{cl}\bigl(\operatorname{conv}
          \bigl(\textstyle\bigcup_{\lambda\ge1}\lambda\Omega\bigr)\bigr).
\]
\item
If\/ $\Omega$ fulfils
Assumption~\ref{main_assumption}\,\hyperref[main_assumption]{(i)}, then
$\Omega=\Omega^{\diamond\diamond}$.
\end{enumerate}
\end{theorem}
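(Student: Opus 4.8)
The plan is to treat the antipolar as the polar duality associated with the nondegenerate symmetric bilinear form $B\bigl((x,y),(p,q)\bigr)=px-qy$, so that $\Omega^{\diamond}=\{w\mid B(w,\omega)\ge 1\ \forall\omega\in\Omega\}$; equivalently one may reflect $(x,y)\mapsto(x,-y)$ and work with the ordinary ``reverse polar'' $S^{\star}=\{w\mid\langle w,s\rangle\ge 1\ \forall s\in S\}$, the two reflections cancelling in the double antipolar. Part~1 is then immediate: $\Omega^{\diamond}\ne\varnothing$ means exactly that some $(p,q)$ satisfies $px-qy\ge 1$ on $\Omega$, and relabelling $A=p$, $B=-q$ this is precisely strict separation of $\Omega$ from the origin; hence $\Omega^{\diamond}=\varnothing$ iff $\Omega$ is not strictly separated from the origin. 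I would next record the structural properties that make the rest routine: being an intersection of the closed convex ``Lorentz half-spaces'' $\{B(\cdot,\omega)\ge 1\}$, the set $\Omega^{\diamond}$ is always closed and convex, does not contain $0$ (since $B(0,\cdot)=0\not\ge1$), and has the ray property ($B(\lambda w,\omega)=\lambda B(w,\omega)\ge\lambda\ge1$ for $\lambda\ge1$). Together with Part~1 this already yields the first assertion of Part~2: if $\Omega$ is strictly separated then $\Omega^{\diamond}$ is non-empty and satisfies Assumption~\ref{main_assumption}\,\hyperref[main_assumption]{(i)}.

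For the bipolar identity set $T=\operatorname{cl}\bigl(\operatorname{conv}\bigl(\bigcup_{\lambda\ge1}\lambda\Omega\bigr)\bigr)$. Since $B(w,\omega)\ge1$ for all $\omega\in\Omega$ is equivalent to $B(w,\cdot)\ge1$ on all of $T$ (the defining inequality survives scaling by $\lambda\ge1$, convex combinations, and limits), we have $\Omega^{\diamond}=T^{\diamond}$, which also shows $T$ is the natural object. The easy inclusion $T\subseteq\Omega^{\diamond\diamond}$ follows because $\Omega\subseteq\Omega^{\diamond\diamond}$ trivially (every $\omega\in\Omega$ pairs $\ge1$ with every $w\in\Omega^{\diamond}$), while $\Omega^{\diamond\diamond}$, being itself an antipolar, is closed, convex and has the ray property; hence it contains $\operatorname{cl}\operatorname{conv}$ of $\bigcup_{\lambda\ge1}\lambda\Omega$. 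The content of Parts~2 and~3 is therefore the reverse inclusion $\Omega^{\diamond\diamond}\subseteq T$.

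The reverse inclusion is the crux, and I expect the delicate point to be producing a separating covector that actually lies in $\Omega^{\diamond}$, not one that merely separates. Given $z_{0}\notin T$, the strict separation theorem for a point and a closed convex set furnishes $w_{0}$ with $B(w_{0},z_{0})<\inf_{t\in T}B(w_{0},t)=:\beta$ (nondegeneracy of $B$ lets us represent every functional as $B(w_{0},\cdot)$). The ray property of $T$ forces $B(w_{0},\cdot)\ge0$ on $T$: if $B(w_{0},t)<0$ for some $t\in T$, then $B(w_{0},\lambda t)\to-\infty$ as $\lambda\to\infty$, contradicting the finite lower bound; thus $\beta\ge0$. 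If $\beta>0$ one simply rescales, $w:=w_{0}/\beta\in\Omega^{\diamond}$ and $B(w,z_{0})<1$, so $z_{0}\notin\Omega^{\diamond\diamond}$. The genuinely awkward case is $\beta=0$, where rescaling is unavailable; here I would use that $\Omega^{\diamond}\ne\varnothing$ (Part~1, strict separation) to fix some $w_{1}\in\Omega^{\diamond}$ and consider $w_{t}=w_{1}+t\,w_{0}$ for $t\ge0$. Superadditivity of the infimum gives $\inf_{T}B(w_{t},\cdot)\ge\inf_{T}B(w_{1},\cdot)+t\inf_{T}B(w_{0},\cdot)\ge1$, so $w_{t}\in\Omega^{\diamond}$, while $B(w_{t},z_{0})=B(w_{1},z_{0})+t\,B(w_{0},z_{0})\to-\infty$ since $B(w_{0},z_{0})<\beta=0$; choosing $t$ large yields $B(w_{t},z_{0})<1$ and again $z_{0}\notin\Omega^{\diamond\diamond}$. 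This establishes $\Omega^{\diamond\diamond}\subseteq T$ and hence the formula in Part~2.

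Finally, Part~3 follows from Part~2. Under Assumption~\ref{main_assumption}\,\hyperref[main_assumption]{(i)} the set $\Omega$ is closed, convex and avoids the compact set $\{0\}$, so the strict separation theorem makes $\Omega$ strictly separated from the origin; Part~2 then applies and gives $\Omega^{\diamond\diamond}=\operatorname{cl}\operatorname{conv}\bigl(\bigcup_{\lambda\ge1}\lambda\Omega\bigr)$. But the ray property gives $\bigcup_{\lambda\ge1}\lambda\Omega\subseteq\Omega$, with equality at $\lambda=1$, and convexity and closedness of $\Omega$ then collapse the right-hand side to $\Omega$ itself, whence $\Omega^{\diamond\diamond}=\Omega$. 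I would double-check only the two soft points the argument rests on: that separating functionals may always be represented through the indefinite form $B$ (immediate from its nondegeneracy), and that passing to $T$ does not disturb the antipolar, which is precisely what lets the separation be run against a bona fide closed convex set.
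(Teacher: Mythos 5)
Your proof is correct, and it diverges from the paper's at exactly the point you flagged as delicate. Both arguments share the same skeleton: part~1 read off from the definition, the observation that the antipolar is unchanged by passing to $T=\operatorname{cl}\bigl(\operatorname{conv}\bigl(\bigcup_{\lambda\ge1}\lambda\Omega\bigr)\bigr)$, and a convex-separation argument for the hard inclusion $\Omega^{\diamond\diamond}\subseteq T$. But the logical order and the crux are handled differently. The paper proves part~3 \emph{first}, directly for $\Omega$ satisfying Assumption~\ref{main_assumption}: given $(x_0,y_0)\in\Omega^{\diamond\diamond}\setminus\Omega$, it separates the whole segment $I=\{t(x_0,y_0)\mid 0\le t\le 1\}$ from $\Omega$ (disjointness of $I$ from $\Omega$ is precisely where the ray property and $0\notin\Omega$ enter). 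Since $0\in I$, the separating line automatically misses the origin, so it can be normalized at once to $\{px-qy=1\}$ with $\Omega$ on the $\ge 1$ side; the separating covector thus lands in $\Omega^{\diamond}$ with no case analysis, and part~2 is then deduced by applying part~3 to $T$. You run the implication in the opposite direction: part~2 first, by separating the \emph{point} $z_0$ from $T$, which creates the normalization problem (your $\beta=0$ case) that the paper's segment trick is designed to sidestep; you resolve it with the perturbation $w_1+t\,w_0$, exploiting non-emptiness of $\Omega^{\diamond}$ and superadditivity of the infimum, and part~3 then collapses to the identity $T=\Omega$ under Assumption~\ref{main_assumption}. Both routes are sound: the paper's is shorter, eliminating the degenerate case outright by enlarging the set being separated; yours makes part~2 the primary, self-contained statement for an arbitrary strictly separated $\Omega$, at the cost of the extra recession-type case, which is the standard general-purpose fix when rescaling of the separating functional fails.
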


\begin{proof}
\emph{1.}
If $\Omega$ is strictly separated from the origin, so that
$Ax+By\ge1$ on $\Omega$, then $(A,-B)\in\Omega^{\diamond}$ and the latter
is non-empty.  Conversely,
$(p,q)\in\Omega^{\diamond}$ strictly separates $\Omega$ from~$0$.

\smallskip\noindent
\emph{3.}
By definition, $px-qy\ge1$ for all $(x,y)\in\Omega$ and each
$(p,q)\in\Omega^{\diamond}$, hence
\(\Omega\subset\Omega^{\diamond\diamond}\).
If $(x_{0},y_{0})\in\Omega^{\diamond\diamond}\setminus\Omega$, then the
segment $I=\{t(x_{0},y_{0})\mid0\le t\le1\}$ is disjoint from $\Omega$.
Because $I$ is compact and $\Omega$ is convex and closed, there exists a
supporting line $\{px-qy=1\}$ separating them.  This line represents a
point $(p,q)\in\Omega^{\diamond}$, yet
$px_{0}-qy_{0}<1$, contradicting
$(x_{0},y_{0})\in\Omega^{\diamond\diamond}$.

\smallskip\noindent
\emph{2.}
If $\Omega$ is strictly separated from $0$ then so is
$\operatorname{conv}\bigl(\bigcup_{\lambda\ge1}\lambda\Omega\bigr)$, and
the latter satisfies
Assumption~\ref{main_assumption}\,\hyperref[main_assumption]{(i)}.  
Since taking convex hulls, positive dilations and closure does not alter
the antipolar, part~3 yields the required identity, proving~2.
\end{proof}

Consequently, every set $\Omega$ with
\hyperref[main_assumption]{(i)} is itself the antipolar of its antipolar
$\Omega^{\diamond}$.

\medskip
We now illustrate antipolars with several explicit examples of sets
satisfying Assumption~\ref{main_assumption}.

\begin{example}
Let $P_{0}=(1,1)$, $P_{1}=(1,-1)$ and write
\[
    \R_{1+}=\{\lambda\in\R\mid \lambda\ge1\},\qquad
    [P_{0};P_{1}]
      =\{tP_{0}+(1-t)P_{1}\mid 0\le t\le1\}.
\]
Set
\[
    \Omega=\R_{1+}[P_{0};P_{1}]
           =\bigl\{(x,y)\mid |y|\le x,\;x\ge1\bigr\}.
\]
The corresponding antinorm $\nu_{\Omega}$ equals\/ $1$ on the segment
\([P_{0};P_{1}]\) and takes the value\/ $\lambda$ at the points of
$\lambda[P_{0};P_{1}]$, $\lambda\ge1$; namely
\[
    \nu_{\Omega}(x,y)=
    \begin{cases}
        x,& |y|\le x,\\[2pt]
        -\infty,& \text{otherwise}.
    \end{cases}
\]
A straightforward calculation gives
\[
    \Omega^{\diamond}
        =\bigl\{(p,q)\in\R^{2*}\mid |q|\le p-1\bigr\},
    \qquad
    \partial\Omega^{\diamond}
        =\R_{+}\bigl(P_{0}\cup P_{1}\bigr)+(1,0).
\]
The antinorm associated with $\Omega^{\diamond}$ is
\[
    \nu_{\Omega^{\diamond}}(p,q)=
    \begin{cases}
        p-|q|,& |q|\le p,\\[2pt]
        -\infty,& \text{otherwise}.
    \end{cases}
\]

Note that here $\Omega$ verifies property
\hyperref[dual_assmps]{($**$)} but not
\hyperref[dual_assmps]{($*$)}, whereas $\Omega^{\diamond}$ enjoys
\hyperref[dual_assmps]{($*$)} but fails
\hyperref[dual_assmps]{($**$)}.  Thus the two properties are dual to
each other.
\end{example}

\begin{example}[Antipolar of the right half of the $\alpha$-hyperbola]
\label{exmpl:omega_alpha}
On $\R^{2}$, $\alpha>1$, consider the antinorm
\[
    \|(x,y)\|_{\alpha}=|x|^{\alpha}-|y|^{\alpha}
    \quad\text{for }|y|\le x,
\]
finite exactly on the cone $\{|y|\le x\}$.  Its unit ball,
the \emph{$\alpha$-hyperbola}, is
\[
    \Omega_{\alpha}
      =\bigl\{(x,y)\in\R^{2}\mid
          |x|^{\alpha}-|y|^{\alpha}\ge1,\;x>0\bigr\}.
\]
The set $\Omega_{\alpha}$ is strictly convex and the distance from
$\partial\Omega_{\alpha}$ to the asymptotes $y=\pm x$ is zero, hence
$\Omega_{\alpha}$ satisfies both Assumptions~\ref{main_assumption} and
\ref{dual_assmps}.  Solving the minimisation problem
\(\min_{(x,y)\in\Omega_{\alpha}}(px-qy)\) by Lagrange multipliers one
finds that, when \(\alpha^{-1}+\beta^{-1}=1\), the antipolar is again an
$\beta$-hyperbola:
\[
    \Omega_{\alpha}^{\diamond}=\Omega_{\beta}.
\]
In particular, for $\alpha=2$ we have
\(\Omega_{2}^{\diamond}=\Omega_{2}\).

Observe that in this example \(\Omega=\Omega_{\alpha}\) and
\(\Omega^{\diamond}=\Omega_{\beta}\) satisfy all three properties
\hyperref[main_assumption]{(i)},
\hyperref[dual_assmps]{($*$)} and
\hyperref[dual_assmps]{($**$)}.  This leads to the following duality
result.
\end{example}

\begin{theorem}[Dual properties]
\label{theorm:dual_properties}
Let\/ $\Omega\subset\R^{2}$ satisfy
Assumption~\ref{main_assumption}\,\hyperref[main_assumption]{(i)}.
\begin{enumerate}
\item
$\Omega$ has property \hyperref[dual_assmps]{($*$)}  
iff\/ $\Omega^{\diamond}$ has property \hyperref[dual_assmps]{($**$)}.
\item
$\Omega^{\diamond}$ has property \hyperref[dual_assmps]{($*$)}
iff\/ $\Omega$ has property \hyperref[dual_assmps]{($**$)}.
\end{enumerate}
\end{theorem}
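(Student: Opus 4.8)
The two assertions are interchanged by the antipolar involution, so the plan is to reduce everything to assertion~1. By the Bipolar Theorem~\ref{theorem:biantipolar}, every $\Omega$ with property~(i) satisfies $\Omega^{\diamond\diamond}=\Omega$, and $\Omega^{\diamond}$ again satisfies~(i); hence applying assertion~1 to $\Omega^{\diamond}$ in place of $\Omega$ yields ``$\Omega^{\diamond}$ has $(*)$ iff $\Omega^{\diamond\diamond}=\Omega$ has $(**)$'', which is precisely assertion~2. Thus it suffices to prove: \emph{$\Omega$ has $(*)$ iff $\Omega^{\diamond}$ has $(**)$.}

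For this I would work with the nondegenerate bilinear pairing $b\bigl((p,q),(x,y)\bigr)=px-qy$ of Definition~\ref{defn: antipolar_set}, and with the cone $C=\overline{\R_{+}\Omega}=\dom\nu_{\Omega}$, assumed to be a proper angular sector with edge rays $l_{0},l_{1}$ in directions $u_{0},u_{1}$ (the degenerate cases where $C$ is a ray or a half-plane render $(**)$ vacuous or make $\Omega^{\diamond}$ lower-dimensional, and I would dispatch them separately). Then $C^{\diamond}=\dom\nu_{\Omega^{\diamond}}$ is the $b$-dual sector, whose edges $m_{0},m_{1}$ (directions $w_{0},w_{1}$) are $b$-orthogonal to $u_{1},u_{0}$ respectively; this edge-to-edge duality is the standard, purely linear-algebraic duality of faces of dual cones. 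I would also invoke the reformulation recorded after Assumption~\ref{dual_assmps}: property~$(*)$ holds iff $\nu_{\Omega}$ vanishes on $l_{0}\cup l_{1}$, that is $\nu_{\Omega}(u_{0})=\nu_{\Omega}(u_{1})=0$.

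The core is the per-edge equivalence
\[
    \nu_{\Omega}(u_{1})=0
    \quad\Longleftrightarrow\quad
    \operatorname{dist}(m_{0},\Omega^{\diamond})=0,
\]
which I would prove and then apply at both edges. Fix Euclidean-orthonormal $w_{0},n_{0}$ with $n_{0}$ pointing from $m_{0}$ into $C^{\diamond}$, and write a candidate covector as $(p,q)=sw_{0}+tn_{0}$; its distance to the ray $m_{0}$ is $t$ once $s\ge 0$, so $(**)$ at $m_{0}$ means the infimum of admissible $t$ is $0$. Membership $(p,q)\in\Omega^{\diamond}$ reads $s\,b(w_{0},x)+t\,b(n_{0},x)\ge 1$ for all $x\in\Omega$, where $b(w_{0},\cdot)\ge 0$ on $C$ with equality exactly on $l_{1}$, and $b(n_{0},u_{1})>0$. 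If $\nu_{\Omega}(u_{1})>0$, the innermost point $x_{\ast}=u_{1}/\nu_{\Omega}(u_{1})\in\Omega\cap l_{1}$ has $b(w_{0},x_{\ast})=0$, so its constraint forces $t\,b(n_{0},x_{\ast})\ge 1$, i.e.\ $t\ge 1/b(n_{0},x_{\ast})>0$, whence $\operatorname{dist}(m_{0},\Omega^{\diamond})>0$. If instead $\nu_{\Omega}(u_{1})=0$, then $\Omega\cap l_{1}=\varnothing$, every constraint has $b(w_{0},x)>0$, and for each fixed $t>0$ an admissible $s$ exists as soon as $S(t)=\sup_{x\in\Omega}\bigl(1-t\,b(n_{0},x)\bigr)/b(w_{0},x)$ is finite, giving a covector in $\Omega^{\diamond}$ with $n_{0}$-component $t$ arbitrarily small, hence $\operatorname{dist}(m_{0},\Omega^{\diamond})=0$.

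Applying this at $l_{1}\leftrightarrow m_{0}$ and, symmetrically, at $l_{0}\leftrightarrow m_{1}$, and recalling that $(*)$ is the conjunction $\nu_{\Omega}(u_{0})=\nu_{\Omega}(u_{1})=0$ while $(**)$ for $\Omega^{\diamond}$ is the conjunction of the two distance equalities, yields assertion~1, and assertion~2 follows by the involution. I expect the main obstacle to be the implication $\nu_{\Omega}(u_{1})=0\Rightarrow\operatorname{dist}=0$: one must show $S(t)<\infty$, which hinges on the uniform fact that for $x_{k}\in\Omega$ with $b(w_{0},x_{k})\to 0$ one necessarily has $|x_{k}|\to\infty$ and $b(n_{0},x_{k})\to+\infty$ (so the numerator $\to-\infty$ and cannot force a blow-up). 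This I would extract from upper semicontinuity of $\nu_{\Omega}$ together with $\Omega\cap l_{1}=\varnothing$ and $b(n_{0},u_{1})>0$; the bookkeeping of the edge correspondence, the sign of $b(n_{0},u_{1})$, and the degenerate-cone cases are the remaining points requiring care.
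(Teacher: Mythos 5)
Your proposal is correct, but it proves the theorem by a genuinely different route than the paper. Both proofs share the same outer skeleton: dispatch the degenerate cases ($\dim\Omega=1$ or $\dim\Omega^\diamond=1$) separately, and use the bipolar Theorem~\ref{theorem:biantipolar} ($\Omega^{\diamond\diamond}=\Omega$, with $\Omega^\diamond$ again satisfying (i)) to reduce the two assertions to one another. But the core arguments diverge. The paper works synthetically with supporting lines: it characterizes boundary points of $\Omega^\diamond$ via small rotations of separating lines about their point nearest the origin (Lemmas~\ref{lemma:inner_polar_criteria} and~\ref{lemma:inner_polar_gamma}), proves $(*)$ for $\Omega^\diamond$ $\Rightarrow$ $(**)$ for $\Omega$ by contradiction through a sequence-of-angles argument (Lemma~\ref{lemma:small_angles}), and proves the converse by producing an interior point $\lambda'(p,q)$ using Lemma~\ref{lemma:**_not_parallel}, scaling of intersection points with the rays $l_0,l_1$, and Lemma~\ref{lemma:inner_polar}. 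You instead prove a sharper, per-edge quantitative statement, $\nu_\Omega(u_1)=0\iff\operatorname{dist}(m_0,\Omega^\diamond)=0$, in linear coordinates adapted to the edge of the dual cone: when $\nu_\Omega(u_1)>0$ the innermost point $x_\ast\in\Omega\cap l_1$ forces the explicit lower bound $\operatorname{dist}(m_0,\Omega^\diamond)\ge 1/b(n_0,x_\ast)$, and when $\nu_\Omega(u_1)=0$ the finiteness of $S(t)$ yields covectors at distance exactly $t\to0$ from $m_0$. Your route buys elementarity (no rotation machinery), an explicit edge-to-edge correspondence $l_1\leftrightarrow m_0$, and quantitative bounds; the paper's route builds lemmas that it reuses later (notably in Proposition~\ref{prop:good_points}). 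Two points in your write-up are invoked rather than proved, and you should note they need (short) arguments: the reformulation of $(*)$ as vanishing of $\nu_\Omega$ on the boundary rays is only \emph{stated} in the paper after Assumption~\ref{dual_assmps}, so a self-contained proof should include it (a ray in $\partial\Omega$ forces a supporting line through the origin, hence lies in $\partial C$, via Lemma~\ref{lemma:inner_polar}); and the identification of $\operatorname{cl}\,\R_{+}\Omega^\diamond$ with the $b$-dual cone of $C$, including the edge orthogonality $b(w_0,u_1)=0$, requires the observation that every interior ray of the dual cone meets $\Omega^\diamond$ (since $\inf_{x\in\Omega}b(h,x)>0$ there). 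Your flagged obstacle, $S(t)<\infty$, indeed closes as you sketch: if $x_k\in\Omega$ with $b(w_0,x_k)\to0$ and $b(n_0,x_k)$ bounded, then $x_k$ is bounded and a limit point would lie in $\Omega\cap l_1=\varnothing$; hence on the region where the numerator is positive the denominator is bounded below, so the supremum is finite. With these details filled in, your proof is complete.
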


Before proving the theorem we need lemma. Everywhere
below we assume that $\Omega$ possesses
property~\hyperref[main_assumption]{(i)}.

\medskip
To decide whether a ray lies in the interior or on the boundary of
$\Omega$ (or of $\Omega^{\diamond}$) we use the next lemma.

\begin{lemma}
\label{lemma:inner_polar}
Let\/ $\Omega\subset\R^{2}$ satisfy
Assumption~\ref{main_assumption}.
\begin{enumerate}
\item
If\/ $\omega\in\operatorname{int}\Omega$,
then $\lambda\omega\in\operatorname{int}\Omega$ for all $\lambda>1$.
\item
If\/ $\omega\in\partial\Omega$, then either
$\lambda\omega\in\operatorname{int}\Omega$
for all $\lambda>1$ or
$\lambda\omega\in\partial\Omega$ for all $\lambda>1$.
\end{enumerate}
\end{lemma}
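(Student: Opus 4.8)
The plan is to treat the two parts in order, deriving the second from the first together with a standard line-segment principle of convex analysis.

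For part (1) I would argue directly from the ray property. Since $\omega\in\operatorname{int}\Omega$, choose $\varepsilon>0$ with the open ball $B(\omega,\varepsilon)\subset\Omega$. For a fixed $\lambda>1$ the dilation by $\lambda$ sends this ball to $B(\lambda\omega,\lambda\varepsilon)$, and by the ray property $B(\lambda\omega,\lambda\varepsilon)=\lambda B(\omega,\varepsilon)\subset\lambda\Omega\subset\Omega$. Thus $\lambda\omega$ has an open neighbourhood inside $\Omega$, so $\lambda\omega\in\operatorname{int}\Omega$. This part is routine.

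For part (2) I would first note that, because $\Omega$ is closed, $\omega\in\partial\Omega\subset\Omega$, and the ray property gives $\lambda\omega\in\Omega$ for every $\lambda\ge1$; hence each point $\lambda\omega$ with $\lambda>1$ is either interior or boundary. The dichotomy is decided by whether some $\lambda_0\omega$ with $\lambda_0>1$ lands in $\operatorname{int}\Omega$. If none does, then every $\lambda\omega$ with $\lambda>1$ lies in $\Omega\setminus\operatorname{int}\Omega=\partial\Omega$, which is the second alternative. If, on the other hand, $\lambda_0\omega\in\operatorname{int}\Omega$ for some $\lambda_0>1$, I would propagate interiority along the whole ray. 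For $\mu\ge\lambda_0$ this is immediate from part (1) applied to the interior point $\lambda_0\omega$. For $1<\mu<\lambda_0$ I would invoke the line-segment principle: since $\lambda_0\omega\in\operatorname{int}\Omega$ and $\omega\in\Omega$, every point of the half-open segment $(\omega,\lambda_0\omega]$ belongs to $\operatorname{int}\Omega$; writing these points as $(1+t(\lambda_0-1))\omega$ for $t\in(0,1]$, their scalars fill exactly the interval $(1,\lambda_0]$. Combining the two ranges yields $\mu\omega\in\operatorname{int}\Omega$ for all $\mu>1$.

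The only step requiring care is the line-segment principle, which I would either cite (e.g.\ Rockafellar) or prove in two lines: taking $\varepsilon>0$ with $B(\lambda_0\omega,\varepsilon)\subset\Omega$ and setting $z_t=t\lambda_0\omega+(1-t)\omega$, any $w$ with $\|w-z_t\|<t\varepsilon$ can be written as $w=tv+(1-t)\omega$ with $v=\lambda_0\omega+(w-z_t)/t\in B(\lambda_0\omega,\varepsilon)\subset\Omega$, so convexity gives $w\in\Omega$ and hence $B(z_t,t\varepsilon)\subset\Omega$. This is the main (and only mild) obstacle; everything else is a direct consequence of convexity and the ray property.
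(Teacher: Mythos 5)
Your proposal is correct and takes essentially the same route as the paper: part (1) by dilating a neighbourhood of $\omega$ with the ray property, and part (2) by the dichotomy on the ray, propagating interiority forward via part (1) and backward via convexity along the segment from $\omega$ to the interior point $\lambda_0\omega$. The paper phrases your line-segment principle geometrically (the interior of the union of all segments joining $\omega$ to a neighbourhood $U$ of $\lambda_0\omega$ covers $\{\mu\omega \mid 1<\mu<\lambda_0\}$), whereas you verify it with an explicit ball computation, but the underlying argument is identical.
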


\begin{proof}
Let $\omega\in\mathrm{int}\,\Omega$.  Denote by $U$ a neighbourhood of $\omega$ with $U\subset\Omega$.  Because, by the hypothesis of the lemma, $\Omega$ satisfies Assumption~\ref{main_assumption}, for every $\lambda>1$ we have $\lambda U\subset\Omega$.  The set $\lambda U$ is open and is a neighbourhood of the point $\lambda\omega$, hence $\lambda\omega\in\mathrm{int}\,\Omega$.  This proves item~1.

Now let $\omega\in\partial\Omega$.  Suppose that the ray $\{\lambda\omega\mid\lambda>1\}$ is not entirely contained in $\partial\Omega$, i.e.\ there exists $\lambda>1$ such that $\lambda\omega\in\mathrm{int}\,\Omega$.  By the previous part of the lemma, for every $\mu>\lambda$ the point $\mu\omega$ lies in $\mathrm{int}\,\Omega$.  We show that the same is true for the points $\mu\omega$ with $1<\mu<\lambda$.  Let $U\subset\Omega$ be a neighbourhood of $\lambda\omega$.  Because $\Omega$ is convex, the union of all segments joining $\omega$ to points of $U$ is contained in $\Omega$.  The interior of that union is a neighbourhood of the set $\{\mu\omega\mid 1<\mu<\lambda\}$, which proves item~2.
\end{proof}

Note that if the non-empty set $\Omega$ is strictly separated from the origin, then by the biantipolar theorem~\ref{theorem:biantipolar} the set $\Omega^\diamond$ possesses property~\hyperref[main assumptions i * **]{$(i)$} and therefore also satisfies the assumptions of the above lemma.

We now give a criterion for a point $\omega^\diamond\in\Omega^\diamond$ to belong to the interior of $\Omega^\diamond$.
\begin{lemma}
\label{lemma:inner_polar_criteria}
Let $\gamma\colon(-1,1)\to\mathbb{R}^{2*}$ be a continuously differentiable curve such that
\[
\gamma(0)=\omega^\diamond\in\Omega^\diamond,
\qquad
\dot{\gamma}(0)\notin\{\lambda\omega^\diamond\mid \lambda\in\mathbb{R}\}.
\]
Assume that there exists $\delta>0$ such that for every $\alpha\in(0,\delta)$
\begin{enumerate}
\item[---] $(1-\alpha)\omega^\diamond\in\Omega^\diamond,$
\item[---] $\gamma(-\alpha)\in\Omega^\diamond,\quad\gamma(\alpha)\in\Omega^\diamond$,
\end{enumerate}
then $\omega^\diamond\in\mathrm{int}\,\Omega^\diamond$.  Conversely, if there is a curve $\gamma$ of the above type for which no such $\delta>0$ exists, then the point $\omega^\diamond$ lies on the boundary of the antipolar set.
\end{lemma}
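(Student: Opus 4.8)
The plan is to handle the two directions separately; the forward (sufficiency) direction is where the work lies. Throughout I use that, since $\gamma(0)=\omega^\diamond\in\Omega^\diamond$, the antipolar is non-empty, so by the biantipolar theorem (Theorem~\ref{theorem:biantipolar}) it satisfies Assumption~\ref{main_assumption}\,\hyperref[main_assumption]{(i)}: it is convex, closed, avoids the origin, and obeys the ray property. In particular $v:=\omega^\diamond\neq 0$; set $w:=\dot\gamma(0)$. The hypothesis $\dot\gamma(0)\notin\{\lambda\omega^\diamond\mid\lambda\in\R\}$ (which includes $\lambda=0$) gives $w\neq 0$ and $w\notin\R v$, so $\{v,w\}$ is a basis of $\R^2$.

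For sufficiency I would exhibit four points of $\Omega^\diamond$ whose convex hull contains $\omega^\diamond$ in its interior. Fix a small $\alpha\in(0,\delta)$ and put
\[
    p_1=(1-\alpha)v,\qquad p_2=(1+\alpha)v,\qquad p_3=\gamma(\alpha),\qquad p_4=\gamma(-\alpha).
\]
All four lie in $\Omega^\diamond$: the points $p_1,p_3,p_4$ by the standing hypotheses, and $p_2$ by the ray property (since $1+\alpha\ge 1$). By convexity $\mathrm{conv}\{p_1,p_2,p_3,p_4\}\subseteq\Omega^\diamond$, so it suffices to prove $\omega^\diamond\in\mathrm{int}\,\mathrm{conv}\{p_1,p_2,p_3,p_4\}$ for all small $\alpha$. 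Subtracting $v$ and dividing by $\alpha>0$, this is equivalent to $0$ lying in the interior of the convex hull of the normalised displacements $\tfrac1\alpha(p_i-v)$. Since $\gamma\in C^1$ with $\gamma(\pm\alpha)-\gamma(0)=\pm\alpha w+o(\alpha)$, these displacements satisfy
\[
    \tfrac1\alpha(p_1-v)=-v,\qquad
    \tfrac1\alpha(p_2-v)=v,\qquad
    \tfrac1\alpha(p_3-v)\longrightarrow w,\qquad
    \tfrac1\alpha(p_4-v)\longrightarrow -w
    \quad(\alpha\to 0^+).
\]

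The heart of the matter is that the limiting configuration $\{-v,v,w,-w\}$ has the origin strictly inside its convex hull: indeed $0=\tfrac14(-v)+\tfrac14 v+\tfrac14 w+\tfrac14(-w)$ is a convex combination with all weights positive, and $\{v,w\}$ spans $\R^2$, so the hull is two-dimensional and this combination witnesses interiority. Here all four points are genuinely needed — dropping $p_2$ (the point supplied by the ray property) leaves $0$ merely on the edge $[w,-w]$ of the triangle $\mathrm{conv}\{-v,w,-w\}$, so the ray property is essential. \emph{The main obstacle} is passing from this limiting statement to the perturbed points $\tfrac1\alpha(p_3-v),\tfrac1\alpha(p_4-v)$, which only approximate $\pm w$ to first order. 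This is precisely the fact that ``the origin lies in the interior of the convex hull of finitely many points'' is an \emph{open} condition in the positions of those points: if $0\in\mathrm{int}\,\mathrm{conv}\{a_i\}$ and $a_i'$ are close enough to $a_i$, then still $0\in\mathrm{int}\,\mathrm{conv}\{a_i'\}$. One proves this by choosing $r>0$ with $\overline{B_r(0)}\subseteq\mathrm{conv}\{a_i\}$, writing each $\pm r e_j=\sum_i\lambda_{ij}^{\pm}a_i$ as a convex combination for a basis $e_1,e_2$, and noting $\sum_i\lambda_{ij}^{\pm}a_i'$ differs from $\pm r e_j$ by at most $\max_i|a_i'-a_i|$; for this bound small the $2n$ perturbed points still surround the origin. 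Applying this with $a_i$ the limiting points and $a_i'=\tfrac1\alpha(p_i-v)$ for small $\alpha$, then scaling by $\alpha$ and translating by $v$, yields $\omega^\diamond\in\mathrm{int}\,\Omega^\diamond$.

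For the converse I would argue by contraposition, and it is easy. Assume $\omega^\diamond\in\mathrm{int}\,\Omega^\diamond$ and let $\gamma$ be any admissible curve. Since $\gamma$ is continuous with $\gamma(0)=\omega^\diamond\in\mathrm{int}\,\Omega^\diamond$, there is $\delta_1>0$ with $\gamma(\pm\alpha)\in\Omega^\diamond$ for $0<\alpha<\delta_1$; and since $(1-\alpha)\omega^\diamond\to\omega^\diamond\in\mathrm{int}\,\Omega^\diamond$ as $\alpha\to0^+$, there is $\delta_2>0$ with $(1-\alpha)\omega^\diamond\in\Omega^\diamond$ for $0<\alpha<\delta_2$. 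Then $\delta=\min(\delta_1,\delta_2)$ satisfies all the requirements, so every admissible $\gamma$ admits such a $\delta$. Contrapositively, the existence of a single admissible curve for which no $\delta$ works forces $\omega^\diamond\notin\mathrm{int}\,\Omega^\diamond$; as $\Omega^\diamond$ is closed and $\omega^\diamond\in\Omega^\diamond$, this means $\omega^\diamond\in\partial\Omega^\diamond$.
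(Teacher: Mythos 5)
Your proof is correct and takes essentially the same approach as the paper: both directions hinge on the convex hull of exactly the same four points $(1-\alpha)\omega^\diamond$, $(1+\alpha)\omega^\diamond$, $\gamma(\alpha)$, $\gamma(-\alpha)$ (with the ray property supplying $(1+\alpha)\omega^\diamond$), contained in $\Omega^\diamond$ by convexity, plus the routine continuity argument for the converse. The only difference is in how interiority is certified: the paper simply notes that for small $\alpha$ the points $\gamma(\pm\alpha)$ lie on opposite sides of the line $\mathbb{R}\omega^\diamond$, whereas you rescale by $1/\alpha$ and use stability of ``$0\in\mathrm{int}\,\mathrm{conv}$'' under perturbation --- a longer but equally valid justification of the same geometric fact.
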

\begin{proof}
($\Rightarrow$) For a $C^{1}$-curve and sufficiently small $\alpha>0$ the points $\gamma(\alpha)$ and $\gamma(-\alpha)$ lie on different sides of the line $\{\lambda\omega^\diamond\mid\lambda\in\mathbb{R}\}$ (and off the line itself).  By convexity of $\Omega^\diamond$,
\[
\mathrm{conv}\bigl(\gamma(-\alpha),\gamma(\alpha),(1-\alpha)\omega^\diamond,(1+\alpha)\omega^\diamond\bigr)\subset\Omega^\diamond,
\]
so $\omega^\diamond\in\mathrm{int}\,\Omega^\diamond$.

($\Leftarrow$) If no such $\delta>0$ exists, then every neighbourhood of $\omega^\diamond$ contains points outside $\Omega^\diamond$; hence $\omega^\diamond\in\partial\Omega^\diamond$.
\end{proof}

\medskip
We now construct a curve $\gamma$ convenient for the proof of the theorem and satisfying the conditions of the preceding lemma.  
Let $L(\omega^\diamond)$ be the line in the plane $\mathbb{R}^2$ determined by the element $\omega^\diamond=(p,q)\in\Omega^\diamond$, namely
\[
L(\omega^\diamond)=\{(x,y)\in\mathbb{R}^2\mid px-qy=1\}.
\]
Consider the affine transformation which is the rotation by an angle $\alpha\in\mathbb{R}$ in the positive direction about the point $d$ of $L(\omega^\diamond)$ closest to the origin.  This transformation sends lines to lines, and for $|\alpha|<\frac{\pi}{2}$ the parameters of the image line $(p,q)(\alpha)$ become the parameters $\omega^\diamond(\alpha)=(p,q)(\alpha)$ of the image of $L(\omega^\diamond)$, which does not pass through the origin.  Note that, in general, the point $\omega^\diamond(\alpha)$ does not belong to $\Omega^\diamond$ when $\alpha\neq0$.  Thus we obtain the curve $\gamma(\alpha)=\omega^\diamond(\alpha)$, $\alpha\in(-\pi/2,\pi/2)$.
\begin{lemma}
\label{lemma:inner_polar_gamma}
The curve $\gamma(\alpha)$ satisfies the requirements of Lemma~\ref{lemma:inner_polar_criteria}, namely
\[
\gamma(0)=\omega^\diamond,\qquad
\dot{\gamma}(0)\notin\{\lambda\omega^\diamond\mid \lambda\in\mathbb{R}\}.
\]
\end{lemma}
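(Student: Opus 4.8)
The plan is to put the rotated line into Hesse normal form, read off the parameters $(p,q)(\alpha)$ from it, and differentiate at $\alpha=0$. First I would absorb the minus sign of Definition~\ref{defn: antipolar_set} into a fixed linear map: the line $L(\omega^\diamond)=\{px-qy=1\}$ is precisely $\{(x,y):\langle n,(x,y)\rangle=1\}$ with $n=(p,-q)$, where $\langle\cdot,\cdot\rangle$ is the Euclidean inner product. The map $J(p,q)=(p,-q)$ is a linear reflection, hence a Euclidean isometry and an involution, and $n=J\omega^\diamond$. Writing $n=r(\cos\phi,\sin\phi)$ with $r=|n|=\sqrt{p^2+q^2}$, I note $r>0$: by the remark preceding the lemma $\Omega^\diamond$ enjoys property~\hyperref[main_assumption]{(i)} (via the biantipolar Theorem~\ref{theorem:biantipolar}), so $0\notin\Omega^\diamond$ and $\omega^\diamond\ne0$. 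The foot of the perpendicular from the origin to $L(\omega^\diamond)$ is then $d=n/r^2$.

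Next I would describe the rotated line. Rotating a line about a point lying on it turns its unit normal by the same angle, so the image line still passes through $d$ and has unit normal $u(\alpha)=(\cos(\phi+\alpha),\sin(\phi+\alpha))$; its equation is $\langle u(\alpha),(x,y)\rangle=\langle u(\alpha),d\rangle=\tfrac{\cos\alpha}{r}$. Dividing by $\tfrac{\cos\alpha}{r}$ — which is legitimate exactly because $|\alpha|<\pi/2$, the restriction already built into the construction — puts the line in the normalised form $\langle\tilde n(\alpha),(x,y)\rangle=1$ with scaled normal $\tilde n(\alpha)=\tfrac{r}{\cos\alpha}\,u(\alpha)$, and consequently $\gamma(\alpha)=J\tilde n(\alpha)$. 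The claim $\gamma(0)=\omega^\diamond$ is then immediate, since $\tilde n(0)=r\,u(0)=n$ gives $\gamma(0)=Jn=\omega^\diamond$.

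For the transversality I would compute $\dot{\tilde n}(0)$. The scalar factor $\tfrac{r}{\cos\alpha}$ has vanishing derivative at $\alpha=0$, so only the direction changes to first order: $\dot{\tilde n}(0)=r\,u'(0)=r(-\sin\phi,\cos\phi)$, which is the $+\tfrac{\pi}{2}$ rotation of $n$, hence nonzero and orthogonal to $n$. Because $J$ is an isometry it preserves orthogonality, giving $\langle\dot\gamma(0),\omega^\diamond\rangle=\langle J\dot{\tilde n}(0),Jn\rangle=\langle\dot{\tilde n}(0),n\rangle=0$ with $\dot\gamma(0)\ne0$; explicitly $\dot\gamma(0)=(q,-p)$. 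A nonzero vector orthogonal to $\omega^\diamond=(p,q)\ne0$ cannot equal any real multiple $\lambda\omega^\diamond$, which is exactly the assertion $\dot\gamma(0)\notin\{\lambda\omega^\diamond\mid\lambda\in\R\}$.

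The computation is short, and I expect the only genuinely delicate point — the main obstacle — to be keeping the reflection $J$ (that is, the minus sign in the antipolar) consistent throughout, since it is precisely what separates this from the familiar polar computation; the geometric content otherwise reduces to the standard fact that the Hesse parameter of a line rotated about its own foot of perpendicular moves, to first order, orthogonally to itself.
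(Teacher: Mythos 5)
Your proposal is correct and takes essentially the same route as the paper: both arguments write the rotated line in normalized (Hesse-type) form, observe that the scalar factor $\tfrac{1}{\cos\alpha}$ is stationary at $\alpha=0$ so only the rotation of the normal contributes to first order, and conclude that $\dot\gamma(0)=(q,-p)$ is the $\tfrac{\pi}{2}$-rotation of $\omega^\diamond=(p,q)$, hence not a real multiple of it. The only difference is notational: the paper carries out the identical computation with the complex variable $p-iq$, obtaining $(p-iq)(\alpha)=(1+i\tan\alpha)(p-iq)$, whereas you work with real vectors and the reflection $J(p,q)=(p,-q)$.
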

\begin{proof}
Write the parameters $(p,q)(\alpha)$ using the complex variable $p-iq$, noting that
$d=(d_1,d_2)=\frac{1}{p^{2}+q^{2}}(p,-q)$:
\[
\begin{aligned}
 (p-iq)(\alpha) &=
 \frac{1}{\operatorname{Re}\bigl((p+iq)e^{-i\alpha}(d_1+id_2)\bigr)}(p-iq)e^{i\alpha}\\
 &=\frac{1}{\cos\alpha}(p-iq)e^{i\alpha}
 =(1+i\tan\alpha)(p-iq).
\end{aligned}
\]
Differentiating with respect to $\alpha$,
\[
\frac{d}{d\alpha}(p-iq)=i\frac{1}{\cos^{2}\alpha}(p-iq)=\frac{1}{\cos^{2}\alpha}(q+ip).
\]
Hence $\dot{\gamma}(0)\not\parallel(p-iq)$, i.e.\ $\dot{\gamma}(0)\notin\{\lambda\omega^\diamond\mid \lambda\in\mathbb{R}\}$.
\end{proof}

\begin{lemma}
\label{lemma:min_distance}
There exists a unique point $\omega\in\Omega$ at which the minimum distance from $\Omega$ to the origin is attained.
\end{lemma}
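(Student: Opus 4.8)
The plan is to recognize this statement as an instance of the classical metric projection theorem: the Euclidean distance from a fixed point—here the origin—to a non-empty closed convex set is attained at exactly one point. By Assumption~\ref{main_assumption}\,\hyperref[main_assumption]{(i)} the set $\Omega$ is non-empty, closed and convex, so all the hypotheses are already in place. I would stress at the outset that ``distance'' means the Euclidean distance on the plane $\R^2$, since this is what makes the argument work; the ray property and the condition $0\notin\Omega$ play no role in existence or uniqueness (the latter merely guarantees that the minimal distance is strictly positive, so that $\omega\neq0$).

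First I would prove existence. Set $d=\inf_{\xi\in\Omega}\|\xi\|\ge0$ and choose a minimizing sequence $\omega_n\in\Omega$ with $\|\omega_n\|\to d$. The sequence $(\omega_n)$ is bounded (its norms converge), so by the Bolzano--Weierstrass theorem in $\R^2$ it has a convergent subsequence, say $\omega_{n_k}\to\omega$. Since $\Omega$ is closed, $\omega\in\Omega$, and by continuity of the norm $\|\omega\|=d$. Thus the infimum is attained at $\omega\in\Omega$.

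Next I would prove uniqueness using strict convexity of the Euclidean ball, most cleanly via the parallelogram identity. Suppose $\omega_1,\omega_2\in\Omega$ both satisfy $\|\omega_i\|=d$. By convexity their midpoint $\tfrac12(\omega_1+\omega_2)$ also lies in $\Omega$, and the parallelogram law $\|\omega_1+\omega_2\|^2+\|\omega_1-\omega_2\|^2=2\|\omega_1\|^2+2\|\omega_2\|^2=4d^2$ yields
\[
    \left\|\tfrac12(\omega_1+\omega_2)\right\|^2
        = d^2-\tfrac14\|\omega_1-\omega_2\|^2.
\]
If $\omega_1\neq\omega_2$, the right-hand side is strictly less than $d^2$, so $\Omega$ would contain a point strictly closer to the origin than $d$, contradicting the definition of $d$. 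Hence $\omega_1=\omega_2$, and the closest point is unique.

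I do not expect any substantive obstacle: the result is a textbook fact, and the entire content of the proof is verifying that $\Omega$ meets the three hypotheses of the projection theorem. The only point deserving explicit mention is that uniqueness genuinely depends on the Euclidean (strictly convex) norm and would fail for a general anti-norm; I would therefore state clearly that the minimum is taken with respect to the Euclidean distance on $\R^2$.
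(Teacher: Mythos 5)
Your proof is correct and takes essentially the same approach as the paper's: existence from closedness of $\Omega$ (your minimizing-sequence argument just fills in the detail the paper leaves implicit, which is indeed needed since $\Omega$ is unbounded), and uniqueness by noting that the midpoint of the segment joining two distinct minimizers lies in $\Omega$ by convexity and is strictly closer to the origin. Your parallelogram identity is merely a quantitative version of the paper's appeal to strict convexity of the Euclidean norm, so the two arguments coincide in substance.
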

\begin{proof}
Existence follows from the closedness of $\Omega$.  Suppose there are two such points.  Because $\Omega$ is convex, the segment joining them is contained in $\Omega$.  Any interior point of that segment is strictly closer to the origin, contradicting minimality.
\end{proof}

The next lemma will be needed for a convenient interpretation of property \hyperref[main assumptions i * **]{$(**)$}.
\begin{lemma}
\label{lemma:**_not_parallel}
Assume that $\Omega$ satisfies property \hyperref[main assumptions i * **]{$(**)$}.  
Then every line of the form 
\[
L(\omega^\diamond)=\{(x,y)\in\mathbb{R}^2\mid px-qy=1\}, 
\qquad
\omega^\diamond=(p,q)\in\Omega^\diamond,
\]
cannot be parallel to either of the boundary rays $l_0,l_1$ of the cone 
\[ 
C=\mathrm{cl}\bigl(\mathbb{R}_{+}\Omega\bigr).
\]
The assertion remains true even when the rays $l_0$ and $l_1$ coincide, i.e.\ when $\dim\Omega=1$.
\end{lemma}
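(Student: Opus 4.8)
The plan is to argue by contradiction, exploiting the fact that a point $\omega^\diamond=(p,q)\in\Omega^\diamond$ encodes a linear functional bounded below by $1$ on $\Omega$, whereas parallelism to a boundary ray would force that same functional to vanish along the ray. First I would record the elementary geometry: by Definition~\ref{defn: antipolar_set}, the functional $\phi(x,y)=px-qy$ satisfies $\phi(x,y)\ge 1$ for every $(x,y)\in\Omega$, so in particular $(p,q)\neq(0,0)$. The line $L(\omega^\diamond)=\{\phi=1\}$ has direction vector $(q,p)$, and $\phi$ annihilates that direction since $\phi(q,p)=pq-qp=0$.

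Next, suppose for contradiction that $L(\omega^\diamond)$ is parallel to one of the boundary rays, say $l_0$. Because $C=\operatorname{cl}(\R_{+}\Omega)$ is a convex cone with apex at the origin, its boundary ray $l_0$ emanates from $0$, so $l_0=\{\lambda v_0\mid\lambda\ge0\}$ with $v_0$ proportional to $(q,p)$. Consequently $\phi(\lambda v_0)=\lambda\phi(v_0)=0$, i.e.\ $\phi\equiv 0$ along the whole ray $l_0$.

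Now I would invoke property~\hyperref[dual_assmps]{($**$)}: since $\operatorname{dist}(l_0,\Omega)=0$, there exist points $\omega_n\in\Omega$ and $z_n\in l_0$ with $|\omega_n-z_n|\to 0$. Splitting $\phi(\omega_n)=\phi(z_n)+\phi(\omega_n-z_n)=\phi(\omega_n-z_n)$ and using the Lipschitz bound $|\phi(\omega_n-z_n)|\le\sqrt{p^{2}+q^{2}}\,|\omega_n-z_n|\to 0$, I conclude $\phi(\omega_n)\to 0$, which contradicts $\phi(\omega_n)\ge 1$. The identical argument applied to $l_1$ (whose distance to $\Omega$ also vanishes by~\hyperref[dual_assmps]{($**$)}) excludes parallelism to $l_1$ as well, and the degenerate case $l_0=l_1$ arising when $\dim\Omega=1$ is covered verbatim, since only a single ray is then in play.

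The only genuinely delicate point is the bookkeeping of directions — confirming that the line $\{px-qy=1\}$ runs in the direction $(q,p)$ and that $\phi$ kills that direction — together with the observation that the distance-zero hypothesis supplies approximating points lying in $\Omega$ itself (not merely in the cone $C$), which is precisely what licenses the use of $\phi\ge 1$. Everything else reduces to a routine continuity estimate for the bounded linear functional $\phi$.
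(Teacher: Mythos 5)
Your proof is correct and follows essentially the same route as the paper's: both argue by contradiction, showing that if $L(\omega^\diamond)$ were parallel to a boundary ray then the separation $px-qy\ge 1$ on $\Omega$ would force $\operatorname{dist}(\Omega,l_0)>0$, contradicting property $(**)$ — the paper phrases this via the distance between the parallel lines $L$ and $L_0$, while you phrase it via the vanishing of the linear functional $\phi$ on $l_0$ together with its Lipschitz bound. A minor advantage of your formulation is that the degenerate case $l_0=l_1$ is absorbed verbatim, whereas the paper treats it by a separate remark.
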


\begin{proof}
Suppose that for some $(p,q)\in\Omega^\diamond$ the corresponding line  
$L=L(p,q)$ (which strictly separates $\Omega$ from the origin) is parallel to the ray $l_0$.  
Then $\Omega$ cannot lie between the line $L_0$ that contains $l_0$ and the line $L$, because $\Omega$ is contained in that half-plane determined by $L$ which does not contain the origin—and therefore does not contain $L_0$ either.  
Consequently, $\Omega$ lies entirely on one side of the two parallel lines $L_0$ and $L$.  
Hence 
\[
\operatorname{dist}\bigl(\Omega,l_0\bigr)
  \,\ge\,
\operatorname{dist}\bigl(\Omega,L_0\bigr)
  \,\ge\,
\operatorname{dist}\bigl(L,L_0\bigr)
  \;>\;0,
\]
which contradicts property \hyperref[main assumptions i * **]{$(**)$}.

If on the other hand every point $\omega\in\Omega$ belongs to the ray $l_0$, then no line parallel to that ray can, by definition, strictly separate the subset $\Omega$ of this ray from the origin.
\end{proof}

\noindent
The converse is also true: if the set $\Omega$ fails to satisfy property \hyperref[main assumptions i * **]{$(**)$}, then 
\[
\operatorname{dist}\bigl(l_k,\Omega\bigr)\neq 0
\quad\text{for some }k\in\{0,1\}.
\]
Translate that ray $l_k$ towards $\Omega$ by (say) one half of this distance in the direction perpendicular to $l_k$, and extend it to a full line.  
The resulting line is defined by an element of the antipolar set $\Omega^\diamond$ and, by construction, is parallel to the boundary ray of the cone~$C$.  

\medskip

\begin{proof}[Proof of Theorem~4]
The theorem is obvious in the degenerate cases $\dim\Omega=1$ or $\dim\Omega^\diamond=1$.  
Indeed, if $\Omega=\{(x,0)\mid x\ge 1\}$, then by definition 
\[
\Omega^\diamond=\{(p,q)\in\mathbb{R}^{2*}\mid p\ge 1\}.
\]
The statement of the theorem clearly holds for $\Omega$, and likewise after replacing $\Omega$ by $\Omega^\diamond$.  
Moreover, under a rotation of $\Omega$ about the origin, its antipolar set $\Omega^\diamond$ is rotated by the same angle, and if $\Omega$ is dilated by a factor $\lambda>0$, the set $\Omega^\diamond$ is contracted by the same factor~$\lambda$.  
Hence the theorem is settled for all degenerate cases.  
From now on we assume that $\mathrm{int}\,\Omega\neq\varnothing$.

\smallskip
Statements~1 and~2 of the theorem are equivalent: by the biantipolar theorem~\ref{theorem:biantipolar} the set $\Omega^\diamond$ also enjoys properties \hyperref[main assumptions i * **]{$(i)$}, and moreover $(\Omega^\diamond)^\diamond=\Omega$.

\medskip
We turn to the substantive part of the proof.  
We prove sufficiency in item~2, namely that property \hyperref[main assumptions i * **]{$(*)$} for $\Omega^\diamond$ implies property \hyperref[main assumptions i * **]{$(**)$} for $\Omega$.  
Arguing by contradiction, suppose that the distance from one of the boundary rays, say $l_0$, of the cone
\[
C=\mathrm{cl}\bigl(\mathbb{R}_{+}\Omega\bigr)
\]
to the set $\Omega$ is non-zero.  
Choose $0<\delta<1$ and translate the ray $l_0$ in the direction perpendicular to $l_0$ towards $\Omega$ by the amount $(1-\delta)$ of that distance.  
Denote this translation vector by $d\in\mathbb{R}^2$, and call the translated ray $l=l_0+d$.  

Because $l_0$ is a boundary ray of~$C$, there exists a sequence of points $\omega_k\in\Omega$ such that
\[
e_k=\frac{\omega_k}{|\omega_k|}\;\longrightarrow\; e_0,
\]
where $e_0$ is the direction of $l_0$.  
Write $\varphi_k$ for the angle between $e_k$ and $e_0$; then $\varphi_k\to 0$.  
The vertex of $l_0$ is the origin, so the vertex of $l$ is the point~$d$.  
Let $\alpha_k$ be the angle between the segment joining $\omega_k$ with $d$ and the ray~$l$.

\begin{lemma}
\label{lemma:small_angles}
\(
\alpha_k\longrightarrow 0 \quad\text{as }k\to\infty.
\)
\end{lemma}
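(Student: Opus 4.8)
The plan is to place the ray $l_0$ along a coordinate axis and reduce the claim to an elementary estimate on the slope of the segment $[\omega_k,d]$. Concretely, I would choose coordinates so that $e_0=(1,0)$ and $l_0=\{(t,0)\mid t\ge0\}$, with $\Omega$ lying in the closed upper half-plane $\{y\ge0\}$; this is legitimate because $l_0$ is a boundary ray of the convex cone $C=\mathrm{cl}(\mathbb{R}_+\Omega)$, so $C$, and hence $\Omega$, lies on one side of the line through $l_0$. In these coordinates the perpendicular translation towards $\Omega$ is $d=(0,h)$ with $h=(1-\delta)\operatorname{dist}(l_0,\Omega)>0$, and $l=\{(t,h)\mid t\ge0\}$ has vertex $d$ and direction $(1,0)$. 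Writing $\omega_k=(x_k,y_k)=|\omega_k|(\cos\varphi_k,\sin\varphi_k)$, the hypothesis $e_k\to e_0$ says $\varphi_k\to0$, so $x_k>0$ eventually and $y_k/x_k=\tan\varphi_k\to0$.

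The decisive preliminary step is to show that $|\omega_k|\to\infty$, equivalently $x_k\to\infty$. I would argue by contradiction: if some subsequence of $(\omega_k)$ stayed bounded, it would have a further subsequence converging to a limit $\omega_*\in\Omega$, using that $\Omega$ is closed. If $\omega_*\ne0$, then $\omega_*/|\omega_*|=\lim e_k=e_0$ forces $\omega_*\in l_0\cap\Omega$, contradicting $\operatorname{dist}(l_0,\Omega)>0$; if $\omega_*=0$, then $0\in\Omega$, contradicting property~\hyperref[main_assumption]{(i)}. Hence $|\omega_k|\to\infty$, and since $x_k=|\omega_k|\cos\varphi_k$ with $\cos\varphi_k\to1$, we obtain $x_k\to\infty$.

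With this in hand the conclusion is immediate. The angle $\alpha_k$ between the ray $l$ (direction $(1,0)$) and the segment from $d$ to $\omega_k$ (direction $\omega_k-d=(x_k,\,y_k-h)$) satisfies
\[
    \tan\alpha_k=\frac{|y_k-h|}{x_k}
        =\Bigl|\frac{y_k}{x_k}-\frac{h}{x_k}\Bigr|
        =\bigl|\tan\varphi_k-\tfrac{h}{x_k}\bigr|.
\]
As $k\to\infty$ both $\tan\varphi_k\to0$ and $h/x_k\to0$, the latter because $x_k\to\infty$ while $h$ is a fixed constant; therefore $\tan\alpha_k\to0$, and so $\alpha_k\to0$, as asserted.

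I expect the only point requiring genuine care to be the escape-to-infinity step $|\omega_k|\to\infty$: it is precisely here that all the standing hypotheses on $\Omega$ (closedness, $0\notin\Omega$, and the strict separation $\operatorname{dist}(l_0,\Omega)>0$) enter, and without it the term $h/x_k$ need not vanish and the estimate would fail. The final slope computation is then routine trigonometry.
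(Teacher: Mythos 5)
Your proof is correct and takes essentially the same route as the paper: the decisive step in both is showing $|\omega_k|\to\infty$ by contradiction (using closedness of $\Omega$ and $\operatorname{dist}(l_0,\Omega)>0$), after which the offset $d$ becomes negligible. The only cosmetic difference is that the paper concludes coordinate-free via $\cos\alpha_k=\langle e_0,(\omega_k-d)/|\omega_k-d|\rangle=o(1)+\cos\varphi_k\to1$, whereas you pass to adapted coordinates and show $\tan\alpha_k=\bigl|\tan\varphi_k-h/x_k\bigr|\to0$.
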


\begin{proof}
We first show that $|\omega_k|\to\infty$.  
Otherwise, some subsequence $\omega_{k_i}$ would converge to a point $\omega\in\Omega$.  
Then $\varphi_{k_i}\to 0$ would imply $\omega\in l_0$, which is impossible because by assumption the distance from $l_0$ to $\Omega$ is positive.  
Hence $|\omega_k|\to\infty$.

Now compute the limit of $\cos\alpha_k$:
\[
\cos\alpha_k
  =\Bigl\langle e_0,\frac{d-\omega_k}{|d-\omega_k|}\Bigr\rangle
  =o(1)+\Bigl\langle e_0,\frac{\omega_k}{|\omega_k|}\Bigr\rangle
  =o(1)+\langle e_0,e_k\rangle.
\]
Since $\langle e_0,e_k\rangle=\cos\varphi_k$, we obtain
\[
\lim_{k\to\infty}\cos\alpha_k
  =\lim_{k\to\infty}\cos\varphi_k
  =1,
\]
and therefore $\alpha_k\to 0$.
\end{proof}

Let the line $L_0$ containing the ray $l_0$ be given by the equation $px-qy=0$, and let the line $L$ containing the ray $l=l_0+d$ be given by
\[
\lambda(px-qy)=1,
\qquad
\lambda(p,q)\in\Omega^\diamond,
\quad
\lambda=\lambda(\delta).
\]
Rotating the line $L$ through the angle $\alpha_k$ about the point $d$, the rotated line meets the set $\Omega$ at the point $\omega_k$ (by the very definition of $\omega_k$), and by the lemma we have $\alpha_k\to0$ as $k\to\infty$.  Hence an arbitrarily small rotation of $L$ causes it to intersect~$\Omega$, so $\lambda(p,q)\in\partial\Omega^\diamond$ by Lemmas~\ref{lemma:inner_polar_criteria} and~\ref{lemma:inner_polar_gamma}.  Since this is true for every $0<\delta<1$, Lemma~\ref{lemma:inner_polar} implies that all points of the form $\lambda(p,q)$ with $\lambda>1$ lie on the boundary $\partial\Omega^\diamond$, because $\Omega^\diamond$ satisfies Assumption~\ref{main_assumption} by Theorem~\ref{theorem:biantipolar}.  Thus $\Omega^\diamond$ fails to possess property \hyperref[main assumptions i * **]{$(*)$}—a contradiction.

\medskip
We now prove the converse part of item~2.  
Assume that $\Omega$ satisfies property \hyperref[main assumptions i * **]{$(**)$}.  We show that for every $(p,q)\in\Omega^\diamond$ all points of the ray $\{\lambda(p,q)\mid\lambda>1\}$ lie in the interior of the antipolar set $\Omega^\diamond$.

By Lemma~\ref{lemma:inner_polar} it suffices to find some $\lambda'>1$ such that the point $\lambda'(p,q)$ lies in $\mathrm{int}\,\Omega^\diamond$.  Using Lemma~\ref{lemma:inner_polar_gamma}, construct a curve~$\gamma$ passing through $\lambda'(p,q)$ and show that for sufficiently small~$\alpha$ we have $\gamma(\alpha)\in\Omega^\diamond$.  Geometrically, this means that for small rotations about the point $d(\lambda')$ nearest to the origin on the line $L(\lambda' p,\lambda' q)$, the rotated line still separates $\Omega$ from the origin.

For convenience, let $I$ be the segment joining the origin to the unique point of $\Omega$ closest to~$0$ (uniqueness follows from Lemma~\ref{lemma:min_distance}).  
By Lemma~\ref{lemma:**_not_parallel}, for every $\lambda>1$ the line $L(\lambda p,\lambda q)$ is not parallel to the boundary rays $l_0,l_1$.  By definition of separability this line intersects the segment~$I$ (hence meets the cone $C$ in interior points) but does not intersect $\Omega$.  Consequently it meets both rays $l_0$ and $l_1$.  Rotating $L(\lambda p,\lambda q)$ about $d(\lambda)$ through a sufficiently small angle~$\alpha$ does not destroy these intersections; denote the intersection points by $P_0(\lambda,\alpha)$ and $P_1(\lambda,\alpha)$.  Because every point of $L(\lambda p,\lambda q)$ is $\lambda$ times closer to the origin than the corresponding point of $L(p,q)$, we have
\[
P_k(\lambda,\alpha)=\frac1\lambda\,P_k(1,\alpha).
\]
The points $P_k(\lambda,\alpha)$ depend continuously on $(\lambda,\alpha)$ and satisfy $P_k(\lambda,\alpha)\to 0$ as $\lambda\to\infty$.  Hence there exists $\lambda'>1$ such that $P_0(\lambda',0)$ and $P_1(\lambda',0)$ lie inside the open ball of radius $\tfrac12|I|$ centred at the origin.  By continuity, for sufficiently small~$\alpha$ the points $P_0(\lambda',\alpha)$ and $P_1(\lambda',\alpha)$ remain in that ball.  Put $({p'},{q'})=(\lambda' p,\lambda' q)$.  For small~$\alpha$ the line $L({p'},{q'})$ intersects the cone~$C$ in points whose distance from the origin is $<\tfrac12|I|$, while the origin lies on the same side of the line.  Therefore the points $\gamma(\alpha)$, where $\gamma$ passes through $({p'},{q'})$, belong to $\Omega^\diamond$ for sufficiently small~$\alpha$.

Thus we have found $\lambda'>1$ such that the point $({p'},{q'})=(\lambda' p,\lambda' q)$ lies in $\mathrm{int}\,\Omega^\diamond$.  Lemma~\ref{lemma:inner_polar} then yields 
\[
\{\lambda(p,q)\mid\lambda>1\}\subset\mathrm{int}\,\Omega^\diamond.
\]
The theorem is proved.
\end{proof}

\medskip
To construct the functions $\cosh_\Omega$ and $\sinh_\Omega$ we need that for every point $\omega\in\partial\Omega$ there exists a point $\omega^\diamond\in\partial\Omega^\diamond$ such that $\omega\in L(\omega^\diamond)$, which is not always the case.  Indeed, it may happen that for some $\omega\in\Omega$ the line through $\omega$ separating $\Omega$ from the origin has the form $\{(x,y)\mid px-qy=0\}$ and therefore is not defined by any element of the antipolar set.

\begin{proposition}
\label{prop:good_points}
Let $\Omega\subset\mathbb{R}^2$ satisfy property \hyperref[main assumptions i * **]{$(i)$}.
\begin{enumerate}
\item If $\Omega$ possesses property \hyperref[main assumptions i * **]{$(*)$}, then for every point $\omega\in\partial\Omega$ there exists $\omega^\diamond\in\partial\Omega^\diamond$ such that $\omega\in L(\omega^\diamond)$.
\item If $\Omega$ possesses property \hyperref[main assumptions i * **]{$(**)$}, then for every point $\omega^\diamond\in\partial\Omega^\diamond$ there exists $\omega\in\partial\Omega$ such that $\omega\in L(\omega^\diamond)$.
\end{enumerate}
\end{proposition}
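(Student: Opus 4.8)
The plan is to prove item~1 directly by a supporting-line argument, and then to deduce item~2 from item~1 by duality, exploiting the symmetry of the bilinear pairing $px-qy$ together with the biantipolar theorem~\ref{theorem:biantipolar} and the duality Theorem~\ref{theorm:dual_properties}.

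For item~1 I would first observe that property~\hyperref[dual_assmps]{($*$)} forces $\operatorname{int}\Omega\neq\varnothing$: a one-dimensional set with property~\hyperref[main_assumption]{(i)} is a ray $\{\lambda\omega_{0}\mid\lambda\ge1\}$, for which $\lambda\Omega\subset\partial\Omega$, so that $\lambda\Omega\cap\partial\Omega\neq\varnothing$ and \hyperref[dual_assmps]{($*$)} fails. Hence I may fix $\omega\in\partial\Omega$ and choose a supporting line $\ell$ to $\Omega$ at $\omega$. The crucial step is to show that \emph{some} supporting line at $\omega$ avoids the origin. Since $\omega\neq0$, the only line through both $0$ and $\omega$ is $\R\omega$; so if every supporting line at $\omega$ passed through $0$, then $\R\omega$ would be the unique supporting line there, and the ray property would give $\lambda\omega\in\Omega\cap\R\omega\subset\partial\Omega$ for all $\lambda\ge1$, whence $\lambda\omega\in\lambda\Omega\cap\partial\Omega$ for $\lambda>1$ — contradicting~\hyperref[dual_assmps]{($*$)}. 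This produces a supporting line $\ell$ with $0\notin\ell$.

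I would then normalise. Writing $\ell=\{ax+by=1\}$ (possible since $0\notin\ell$) and using $\lambda\omega\in\Omega$ for $\lambda>1$ to fix the orientation — the points $\lambda\omega$ satisfy $a(\lambda\omega_{x})+b(\lambda\omega_{y})=\lambda>1$, so $\Omega$ lies in the half-plane $\{ax+by\ge1\}$. Setting $(p,q)=(a,-b)$ turns this into $\Omega\subset\{px-qy\ge1\}$, i.e.\ $\omega^{\diamond}:=(p,q)\in\Omega^{\diamond}$, while $a\omega_{x}+b\omega_{y}=1$ is exactly $\omega\in L(\omega^{\diamond})$. Finally $\omega^{\diamond}\in\partial\Omega^{\diamond}$, because $\min_{\Omega}(px-qy)=1$ is attained at $\omega$: for $t\in(0,1)$ one has $\min_{\Omega}(tpx-tqy)=t<1$, so $t\,\omega^{\diamond}\notin\Omega^{\diamond}$, and letting $t\to1^{-}$ exhibits $\omega^{\diamond}$ as a limit of points outside the closed set $\Omega^{\diamond}$. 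This would complete item~1.

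For item~2 I would assume $\Omega$ has property~\hyperref[dual_assmps]{($**$)}. By Theorem~\ref{theorm:dual_properties}(2) the set $\Omega^{\diamond}$ then has property~\hyperref[dual_assmps]{($*$)}, and by the biantipolar theorem~\ref{theorem:biantipolar} it satisfies~\hyperref[main_assumption]{(i)} with $(\Omega^{\diamond})^{\diamond}=\Omega$. Applying item~1 to $\Omega^{\diamond}$ in the role of $\Omega$, each $\omega^{\diamond}\in\partial\Omega^{\diamond}$ yields a point $\omega\in\partial(\Omega^{\diamond})^{\diamond}=\partial\Omega$ for which the defining relation $p\omega_{x}-q\omega_{y}=1$ holds. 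Since the pairing $px-qy$ is symmetric — fixing $(x,y)$ and varying $(p,q)$ describes the dual line, while fixing $(p,q)$ and varying $(x,y)$ describes $L(\omega^{\diamond})$ — this relation is precisely $\omega\in L(\omega^{\diamond})$, which is the assertion of item~2. Thus the main obstacle is the single step of item~1 that produces a supporting line at $\omega$ missing the origin: this is exactly where~\hyperref[dual_assmps]{($*$)} is indispensable, for without it the ray $\R_{+}\omega$ could lie in $\partial\Omega$ and the only supporting line at $\omega$ could be $\R\omega$, which is represented by no element of $\Omega^{\diamond}$. The normalisation, the scaling argument for membership in $\partial\Omega^{\diamond}$, and the reduction of item~2 to item~1 are then routine.
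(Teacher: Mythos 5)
Your proof is correct, and it splits into one half that mirrors the paper and one half that genuinely departs from it. For item~1 you take essentially the paper's route: a supporting line at $\omega$ must avoid the origin, since otherwise it equals $\mathbb{R}\omega$ and the ray property places $\{\lambda\omega\mid\lambda\ge1\}$ inside $\partial\Omega$, contradicting $(*)$; normalising the line to $\{px-qy=1\}$ then produces $\omega^\diamond\in\Omega^\diamond$ with $\omega\in L(\omega^\diamond)$. You are in fact more careful than the paper at the final step: your scaling argument (for $t<1$ one has $tp\,\omega_x-tq\,\omega_y=t<1$, so $t\omega^\diamond\notin\Omega^\diamond$, and $t\omega^\diamond\to\omega^\diamond$) justifies $\omega^\diamond\in\partial\Omega^\diamond$, which the paper merely asserts. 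For item~2 the paper argues by contradiction: assuming $L(\omega^\diamond)\cap\Omega=\varnothing$, it re-runs the rotation machinery (Lemmas~\ref{lemma:inner_polar_criteria}, \ref{lemma:inner_polar_gamma}, \ref{lemma:small_angles}, \ref{lemma:inner_polar}) to show that the whole ray $\{\lambda\omega^\diamond\mid\lambda\ge1\}$ lies in $\partial\Omega^\diamond$, and only then invokes Theorem~\ref{theorm:dual_properties} to contradict $(**)$. You instead make item~2 a formal corollary of item~1: Theorem~\ref{theorm:dual_properties} gives $(*)$ for $\Omega^\diamond$, the biantipolar Theorem~\ref{theorem:biantipolar} gives property (i) together with $(\Omega^\diamond)^\diamond=\Omega$, and the symmetry of the bilinear form $px-qy$ (its matrix $\mathrm{diag}(1,-1)$ is symmetric) turns the conclusion of item~1 applied to $\Omega^\diamond$ into exactly the assertion of item~2. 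Your reduction is shorter and avoids duplicating the geometric rotation argument, at the price of relying on the identification $\mathbb{R}^{2*}\simeq\mathbb{R}^2$ under the antipolar pairing, which you rightly flag and justify; the paper's version keeps a self-contained geometric picture of why a ``bad'' boundary point of $\Omega^\diamond$ forces a whole boundary ray. One small point common to both arguments: applying part~2 of Theorem~\ref{theorem:biantipolar} to conclude that $\Omega^\diamond$ satisfies (i) requires $\Omega$ to be strictly separated from the origin; this follows from strong separation of the closed convex set $\Omega$ from the compact set $\{0\}$, a step you (like the paper) leave implicit, and which is worth one sentence.
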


\begin{proof}
    We prove item 1.  
    A line that separates $\Omega$ from the origin and passes through the point $\omega\in\Omega$ exists by property \hyperref[main assumptions i * **]{$(i)$}.  
    Suppose that this line has the form $\{(x,y)\mid px-qy=0\}$, that is, it is not defined by any element of the antipolar set.  
    Then the ray $\{\lambda\omega\mid \lambda\ge 1\}$ is contained in $\partial\Omega$, which contradicts property \hyperref[main assumptions i * **]{$(*)$}.  
    Hence the parameters $(p,q)\in\Omega^\diamond$ cannot belong to the interior of the antipolar set, so $(p,q)\in\partial\Omega^\diamond$.

    We prove item 2.  
    Assume that the line $L(\omega^\diamond)$ does not intersect $\Omega$.  
    Then, by Lemma~\ref{lemma:inner_polar_gamma}, there exist arbitrarily small rotations of this line about the point nearest to the origin such that the rotated images of $L(\omega^\diamond)$ intersect $\Omega$.  
    The line $L(\lambda\omega^\diamond)$ with $\lambda>1$ has the same property by Lemma~\ref{lemma:small_angles}.  
    By Lemmas~\ref{lemma:inner_polar_criteria} and~\ref{lemma:inner_polar_gamma} we obtain $\lambda\omega^\diamond\in\partial\Omega^\diamond$, and by Lemma~\ref{lemma:inner_polar} we have $\{\lambda\omega^\diamond\mid\lambda\ge 1\}\subset\partial\Omega^\diamond$.  
    By Theorem~\ref{theorm:dual_properties} this implies that the set $\Omega$ cannot possess property \hyperref[main assumptions i * **]{$(**)$}, because in that case the set $\Omega^\diamond$ would fail to possess property \hyperref[main assumptions i * **]{$(*)$}.
\end{proof}

Thus, property \hyperref[main assumptions i * **]{$(*)$} for $\Omega$ guarantees that for every boundary point $\partial\Omega$ there exists a line $L(\omega^\diamond)$ with $\omega^\diamond\in\partial\Omega^\diamond$ passing through it;  
property \hyperref[main assumptions i * **]{$(**)$} for $\Omega$ guarantees that the boundary of $\Omega^\diamond$ contains no ``bad'' points, i.e.\ points whose corresponding lines do not intersect $\Omega$.
\subsection{Functions \texorpdfstring{\(\cosh _\Omega,\, \sinh _\Omega\)}{chΩ, shΩ}}

Assume that the set $\Omega$ satisfies Assumptions~\ref{main_assumption} and~\ref{dual_assmps}.  
Then, by Theorem~\ref{theorm:dual_properties}, its antipolar set $\Omega^\diamond$ also satisfies these assumptions, and by the biantipolar Theorem~\ref{theorem:biantipolar} we have $\Omega^{\diamond\diamond}=\Omega$.  
Thus the correspondence $\Omega\mapsto\Omega^\diamond$ is bijective on the class of sets that satisfy both assumptions.  
Moreover, every point of $\partial\Omega$ has a corresponding point of $\partial\Omega^\diamond$, and vice versa, in the sense of Proposition~\ref{prop:good_points}.  
Henceforth we shall always assume that $\Omega$ satisfies \emph{both} assumptions.

\medskip
\begin{defn}
Let $\Omega$ satisfy Assumptions~\ref{main_assumption} and~\ref{dual_assmps}, and fix a point $\omega_0=(x_0,y_0)\in\partial\Omega$.  
Let $O$ be the origin, $\omega=(x,y)\in\partial\Omega$, and let $\theta$ be the \emph{twice} the area of the contour\footnote{The contour is formed by the segments $O\omega_0$, $O\omega$ and the boundary arc of $\Omega$ between $\omega_0$ and $\omega$.} $O\omega_0\omega O$.  
We define
\begin{equation}
\label{ch_sh_defn}
    \cosh _\Omega\theta = x,\qquad
    \sinh _\Omega\theta = y.
\end{equation}
The functions \(\cosh _\Omega, \sinh _\Omega\) are defined either for all real ``angles''\footnote{Throughout we call the argument $\theta$ an ``angle'' when speaking of the functions $\cosh_\Omega,\sinh_\Omega$, meaning the area specified above.} $\theta$, or only on some interval or open ray, depending on whether the swept area may remain finite in the limit.
\end{defn}

\begin{example}
Let $\alpha>1$ and consider the domain $\Omega_\alpha$ from Example~\ref{exmpl:omega_alpha}.  
The domain of $\cosh_{\Omega_\alpha},\sinh_{\Omega_\alpha}$ is a finite interval if $\alpha>2$, and the whole real line if $\alpha\le 2$, as follows from convergence of the integral $\int_{1}^{+\infty}\! x-(x^\alpha-1)^{1/\alpha}\,dx$ for $\alpha>2$ and its divergence for $\alpha\le 2$.
\end{example}

The antipolar set $\Omega^\diamond$ likewise satisfies Assumptions~\ref{main_assumption} and~\ref{dual_assmps}, so we may construct the analogous functions for $\Omega^\diamond$.  
Take a fixed point $\omega_0^\diamond$ such that\footnote{Such a point exists by Proposition~\ref{prop:good_points}, but need not be unique.}  $\omega_0\in L(\omega_0^\diamond)$.  
With these fixed points $\omega_0,\omega_0^\diamond$ we obtain two pairs of functions satisfying the inequality
\begin{equation}\label{hyp_trig_ineq}
    \cosh _\Omega \theta\,\cosh _{\Omega^\diamond}\eta
    \;-\;
    \sinh _\Omega \theta\,\sinh _{\Omega^\diamond}\eta
    \;\ge\;1,
\end{equation}
where $\theta,\eta$ are arbitrary angles in the respective domains.  
Equality holds in~\eqref{hyp_trig_ineq} \emph{iff} the points
\((\cosh _\Omega\theta,\sinh _\Omega\theta)\) and \((\cosh _{\Omega^\diamond}\eta,\sinh _{\Omega^\diamond}\eta)\) correspond to each other in the sense of Proposition~\ref{prop:good_points}.

The correspondence of points yields, in general, a multi-valued correspondence of angles \(\theta\leftrightarrow\eta\).

\begin{defn}
For $\theta$ in the domain of \(\cosh_\Omega,\sinh_\Omega\) let $\theta^\diamond$ be the set of those $\eta$ in the domain of \(\cosh_{\Omega^\diamond},\sinh_{\Omega^\diamond}\) such that every point \((\cosh_\Omega\theta,\sinh_\Omega\theta)\) corresponds to \((\cosh_{\Omega^\diamond}\eta,\sinh_{\Omega^\diamond}\eta)\) (Proposition~\ref{prop:good_points}).  
Conversely, for $\eta$ in the domain of \(\cosh_{\Omega^\diamond},\sinh_{\Omega^\diamond}\) let ${}^\diamond\eta$ be the set of those $\theta$ in the domain of \(\cosh_\Omega,\sinh_\Omega\) corresponding to \((\cosh_{\Omega^\diamond}\eta,\sinh_{\Omega^\diamond}\eta)\).
\end{defn}

By Proposition~\ref{prop:good_points}, the sets $\theta^\diamond$ and ${}^\diamond\eta$ are non-empty for every admissible $\theta$ and $\eta$, respectively.  
In these terms we may state:

\begin{proposition}
\label{antipolar_corresp}
\[
    \cosh _\Omega \theta\,\cosh _{\Omega^\diamond}\eta
    -\sinh _\Omega \theta\,\sinh _{\Omega^\diamond}\eta
    \;=\;1
    \quad\Longleftrightarrow\quad
    \eta\in\theta^\diamond
    \;\Longleftrightarrow\;
    \theta\in{}^\diamond\eta.
\]
\end{proposition}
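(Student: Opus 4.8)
The plan is to unwind the definitions and to recognise that the bilinear combination on the left-hand side is nothing but the antipolar pairing evaluated at the two boundary points. Write $\omega=(x,y)=(\cosh_\Omega\theta,\sinh_\Omega\theta)\in\partial\Omega$ and $\omega^\diamond=(p,q)=(\cosh_{\Omega^\diamond}\eta,\sinh_{\Omega^\diamond}\eta)\in\partial\Omega^\diamond$, using the coordinate identifications~\eqref{ch_sh_defn}. Substituting these gives
\[
    \cosh_\Omega\theta\,\cosh_{\Omega^\diamond}\eta-\sinh_\Omega\theta\,\sinh_{\Omega^\diamond}\eta
    = px-qy,
\]
so the entire question reduces to characterising when $px-qy=1$. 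I would stress here that the minus sign in Definition~\ref{defn: antipolar_set} is exactly what makes the antipolar pairing match the displayed combination; verifying this alignment of the sign convention is the one bookkeeping point that genuinely must be checked.

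Next I would connect equality to the geometric correspondence. By the very definition of the line $L(\omega^\diamond)=\{(x,y)\mid px-qy=1\}$, the equality $px-qy=1$ holds if and only if $\omega\in L(\omega^\diamond)$, that is, if and only if the points $\omega$ and $\omega^\diamond$ correspond in the sense of Proposition~\ref{prop:good_points}. (For completeness I would also note that the weak inequality $px-qy\ge1$, i.e.~\eqref{hyp_trig_ineq}, is immediate from $\omega\in\partial\Omega\subset\Omega$ and $\omega^\diamond\in\partial\Omega^\diamond$ together with the defining property of the antipolar; this needs no correspondence and is recorded only to situate the equality case.)

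It then remains to translate this point-level equivalence into the angle notation, which is purely a matter of reading the definitions of $\theta^\diamond$ and ${}^\diamond\eta$. By definition, $\eta\in\theta^\diamond$ precisely when $(\cosh_\Omega\theta,\sinh_\Omega\theta)$ corresponds to $(\cosh_{\Omega^\diamond}\eta,\sinh_{\Omega^\diamond}\eta)$, and $\theta\in{}^\diamond\eta$ under the identical condition. Since both memberships are built from one and the same relation $\omega\in L(\omega^\diamond)$, they are automatically equivalent to each other; this relation is symmetric because $px-qy=1$ is algebraically symmetric in the two pairs of coordinates and, by the biantipolar Theorem~\ref{theorem:biantipolar}, $\Omega^{\diamond\diamond}=\Omega$, so the roles of $\Omega$ and $\Omega^\diamond$ may be freely interchanged. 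Chaining the equivalences
\[
    px-qy=1
    \iff \omega\in L(\omega^\diamond)
    \iff \eta\in\theta^\diamond
    \iff \theta\in{}^\diamond\eta
\]
then yields the asserted chain of biconditionals.

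The argument is essentially a definitional unwinding, so I do not anticipate a substantive obstacle. The only two points requiring care are the sign-convention check that identifies the displayed expression with the antipolar pairing $px-qy$, and the observation that $\theta^\diamond$ and ${}^\diamond\eta$ are constructed from a single symmetric correspondence relation, which is precisely what delivers the second biconditional with no additional work beyond invoking $\Omega^{\diamond\diamond}=\Omega$.
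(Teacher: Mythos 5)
Your proposal is correct and follows essentially the same route as the paper: the paper itself treats this proposition as a definitional restatement, having already recorded (right before the definition of $\theta^\diamond$ and ${}^\diamond\eta$) that equality in~\eqref{hyp_trig_ineq} holds iff the boundary points $(\cosh_\Omega\theta,\sinh_\Omega\theta)$ and $(\cosh_{\Omega^\diamond}\eta,\sinh_{\Omega^\diamond}\eta)$ correspond in the sense of Proposition~\ref{prop:good_points}, which is exactly your identification of the displayed expression with the pairing $px-qy$ and of equality with $\omega\in L(\omega^\diamond)$. Your explicit remark that both angle-correspondences are built from the single symmetric relation $px-qy=1$ supplies the one detail the paper leaves implicit, and it is accurate.
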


\medskip
We are now ready to formulate and prove a differentiation theorem for the functions
\(\cosh_\Omega\) and \(\sinh_\Omega\).

\begin{theorem}
\label{thrm: hyp_trig_derivatives}
The functions \(\cosh_\Omega\) and \(\sinh_\Omega\) are locally Lipschitz and therefore differentiable almost everywhere.  
If the set \(\theta^\diamond\) consists of the single element \(\eta\), then
\[
\frac{d}{d\theta}\,\cosh_\Omega(\theta)=\sinh_{\Omega^\diamond}\eta,
\qquad
\frac{d}{d\theta}\,\sinh_\Omega(\theta)=\cosh_{\Omega^\diamond}\eta.
\]

There exists at most a countable set of values of \(\theta\) where the left and right derivatives of \(\cosh_\Omega\) (respectively, of \(\sinh_\Omega\)) are different.  
At such a \(\theta\) the segments
\[
\bigl\{\sinh_{\Omega^\diamond}\eta\mid\eta\in\theta^\diamond\bigr\},
\qquad
\bigl\{\cosh_{\Omega^\diamond}\eta\mid\eta\in\theta^\diamond\bigr\}
\]
lie between the left and the right derivatives, for \(\cosh_\Omega\) and \(\sinh_\Omega\) respectively.
\end{theorem}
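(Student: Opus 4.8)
The plan is to reduce the whole theorem to one geometric identity. Writing $x(\theta)=\cosh_\Omega\theta$, $y(\theta)=\sinh_\Omega\theta$, the definition of $\theta$ as twice the area of the sector $O\omega_0\omega O$ means that along the boundary curve the sector-area element $\tfrac12(x\,dy-y\,dx)$ integrates to $\tfrac12(\theta-\theta_0)$, oriented so as to agree with the classical $\cosh,\sinh$; hence $x\,dy-y\,dx=d\theta$. First I would settle the local Lipschitz property. A compact $\theta$-interval corresponds to a compact arc of $\partial\Omega$ that stays away from the asymptotic directions $l_0,l_1$; the supporting line at $\omega\in\partial\Omega$ is $\{px-qy=1\}$ with $(p,q)=\omega^\diamond$, at distance $1/|\omega^\diamond|$ from the origin, and on the compact arc the corresponding $\omega^\diamond$ range over a compact subset of $\partial\Omega^\diamond$, so this distance is bounded below by some $\delta>0$. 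Since an increment $\Delta\theta$ equals twice the sector area, which is at least $\delta$ times the arc length, the arc length is at most $\delta^{-1}\Delta\theta$; as $(x,y)$ is $1$-Lipschitz in arc length, it is $\delta^{-1}$-Lipschitz in $\theta$, hence differentiable almost everywhere.

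Next I would prove the derivative formula at a point where $\theta^\diamond=\{\eta\}$ is a singleton. Differentiating the area relation $\tfrac12\int(x\,dy-y\,dx)=\tfrac12(\theta-\theta_0)$ at any point of differentiability gives $x\dot y-y\dot x=1$. On the other hand $(\dot x,\dot y)$ is tangent to $\partial\Omega$, and a singleton $\theta^\diamond$ means a unique corresponding covector, i.e.\ a unique supporting line $L(\omega^\diamond)=\{px-qy=1\}$ with $(p,q)=(\cosh_{\Omega^\diamond}\eta,\sinh_{\Omega^\diamond}\eta)$; thus $\partial\Omega$ is differentiable there with tangent direction $(q,p)$, so $(\dot x,\dot y)=\lambda(q,p)$. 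By Proposition~\ref{antipolar_corresp} we have $px-qy=1$, whence substituting into $x\dot y-y\dot x=1$ yields $\lambda(xp-yq)=\lambda=1$. Therefore $(\dot x,\dot y)=(q,p)=(\sinh_{\Omega^\diamond}\eta,\cosh_{\Omega^\diamond}\eta)$, which is exactly the claimed formula.

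Then I would analyze the corners. A convex curve has one-sided tangents everywhere and at most countably many vertices; under property~($*$) the map $\theta\mapsto\omega(\theta)$ is injective, so there are at most countably many $\theta$ at which $\omega(\theta)$ is a vertex, and these are precisely the $\theta$ at which left and right derivatives may differ. Writing $\omega(\theta)=(x_0,y_0)$, the set $\theta^\diamond$ equals $\{(p,q)\in\partial\Omega^\diamond\mid px_0-qy_0=1\}$, i.e.\ the face of $\Omega^\diamond$ exposed by the direction $\omega(\theta)$; by the vertex--edge duality for the antipolar this face is a genuine segment exactly when $\omega(\theta)$ is a vertex. Along that segment the area parameter $\eta$ moves the point linearly, so both $\cosh_{\Omega^\diamond}\eta$ and $\sinh_{\Omega^\diamond}\eta$ vary monotonically between their endpoint values. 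Re-running the previous computation with the two extreme supporting lines, which correspond to the endpoints $\eta_\pm$ of $\theta^\diamond$, identifies the one-sided derivatives as $(\dot x_\pm,\dot y_\pm)=(\sinh_{\Omega^\diamond}\eta_\pm,\cosh_{\Omega^\diamond}\eta_\pm)$; monotonicity along the edge then places the whole segments $\{\sinh_{\Omega^\diamond}\eta\mid\eta\in\theta^\diamond\}$ and $\{\cosh_{\Omega^\diamond}\eta\mid\eta\in\theta^\diamond\}$ between these one-sided values, as asserted.

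I expect the main obstacle to be the corner analysis: one must justify that the one-sided derivatives of $x(\theta),y(\theta)$ exist at every $\theta$ and that the area relation still pins down their magnitude at a vertex, which requires passing the chain rule for one-sided derivatives through the reparametrization from arc length to the area parameter $\theta$ --- a reparametrization whose density, the distance from the origin to the supporting line, is only one-sidedly continuous at a vertex. The second delicate point is the identification that $\theta^\diamond$ is the face of $\Omega^\diamond$ exposed by $\omega(\theta)$ and is a segment exactly at vertices, which I would base on the antipolar duality of Theorem~\ref{theorm:dual_properties}.
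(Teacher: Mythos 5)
Your proposal is correct in outline but takes a genuinely different route from the paper, and the differences are worth spelling out. For local Lipschitz continuity the paper compares the area parameter $\theta$ with the classical polar angle $\varphi$, showing the two are locally bi-Lipschitz by sandwiching sector areas between two triangles, and then controls $R(\varphi)$ via Lipschitzness of the antinorm; you instead compare $\theta$ directly with arc length, using a lower bound $\delta>0$ on the distance from the origin to supporting lines along a compact arc. Your version is cleaner, but note that your assertion that the covectors $\omega^\diamond$ range over a compact subset of $\partial\Omega^\diamond$ is essentially equivalent to the bound $\delta>0$ you want, so it needs its own justification: if supporting lines at points of the compact arc came arbitrarily close to the origin, a limit of them would be a supporting line through the origin, and the ray property would then force a whole ray of $\partial\Omega$ through the origin, contradicting property $(*)$. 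This is a fixable but real step.

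The more substantive divergence is in the derivative formula. The paper computes the difference quotient directly: it parametrizes the unique supporting line as $l_t=\omega+t(q,p)$, shows that the boundary point on the ray through $l_t$ satisfies $\omega_t=l_t+o(t)$ and that $\theta_t-\theta=t+O(t^{2})$, and passes to the limit; run with one-sided limits, the same computation also gives the corner statement, so differentiability at \emph{every} singleton $\theta$ is obtained in one stroke. You instead combine the Wronskian identity $x\,dy-y\,dx=d\theta$ with the support normalization $px-qy=1$ (Proposition~\ref{antipolar_corresp}) to pin down both the direction and the magnitude of the derivative. This is elegant and isolates the two geometric facts actually responsible for the formula, but it presupposes that $\theta\mapsto(x,y)(\theta)$ is differentiable (or one-sidedly differentiable) \emph{at the specific point considered}, whereas Lipschitzness only gives this almost everywhere. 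You flag this as the main obstacle for the corner analysis, but it already affects your singleton case: as written, your second paragraph proves the formula only at points that are simultaneously singleton points and differentiability points, i.e.\ almost everywhere, while the theorem asserts it at every singleton $\theta$. Your proposed remedy is sound --- one-sided tangents of a convex curve exist everywhere, the density $d\theta/ds$ (the distance from the origin to the tangent line) has one-sided limits by monotonicity of tangent directions and is bounded below, so the one-sided chain rule through the arc-length parametrization applies and yields one-sided derivatives everywhere, after which your algebraic identity pins them to $\bigl(\sinh_{\Omega^\diamond}\eta_\pm,\cosh_{\Omega^\diamond}\eta_\pm\bigr)$ --- but this machinery must be installed \emph{before}, not after, the computation of your second paragraph. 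Once reordered, your argument closes and is a legitimate alternative to the paper's direct limit computation, which avoids the arc-length detour entirely.
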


\begin{proof}
\emph{Step 1: local Lipschitz continuity.}
We parameterise the boundary \(\partial\Omega\) in classical polar form:
\((x,y)=R(\cos\varphi,\sin\varphi)\).
By the assumptions on~\(\Omega\), each boundary point corresponds to a unique angle
\(\varphi\in[0,2\pi)\); hence we may write
\[
(x,y)(\varphi)=R(\varphi)(\cos\varphi,\sin\varphi).
\]

The doubled sector area \(\theta=\theta(\varphi)\) associated with
\((x,y)(\varphi)\) is strictly monotone in~\(\varphi\) and locally Lipschitz,
and so is its inverse.  
Indeed, let \(\omega',\omega''\in\partial\Omega\) lie in a sufficiently small neighbourhood of a fixed \(\omega\in\partial\Omega\), and denote the corresponding angles by \(\varphi',\varphi''\) and the doubled areas by \(\theta',\theta''\).  
Then \(|\varphi'-\varphi''|\) is the angle between \(O\omega'\) and
\(O\omega''\) (here \(O\) is the origin), while \(|\theta'-\theta''|\) is the doubled signed area of the sector \(O\omega'\omega''O\).  
Locally, \(\partial\Omega\) is the graph of a convex function, and—except possibly at a countable set of points—there is a \emph{unique} supporting line not passing through the origin (Proposition~\ref{prop:good_points}).  
In this situation the area of the sector is bounded above by the area of the triangle on the same three vertices, and bounded below by the triangle determined by the intersections of \(O\omega'\) and \(O\omega''\) with that supporting line.  
Hence there exist constants \(C_1,C_2>0\) such that
\[
|\varphi'-\varphi''|\le C_1|\theta'-\theta''|,
\qquad
|\theta'-\theta''|\le C_2|\varphi'-\varphi''|.
\]

Because \(R(\varphi)\,\nu(\cos\varphi,\sin\varphi)\equiv1\) and the support function
\(\nu\) is concave, finite\footnote{Property \hyperref[main assumptions i * **]{$(*)$} implies that \(\nu\) vanishes only on \(\partial C\).} and bounded in a neighbourhood of \(\omega\), the function \(\nu\) is Lipschitz in a smaller neighbourhood.  Since \(\nu\) is separated from zero there, \(R(\varphi)\) is locally Lipschitz as well.  As \((\cosh_\Omega\theta,\sinh_\Omega\theta)=(x,y)(\varphi(\theta))\) and \(\varphi(\theta)\) is locally Lipschitz, both \(\cosh_\Omega\) and \(\sinh_\Omega\) are locally Lipschitz.

\medskip\noindent
\emph{Step 2: differentiation at regular points.}
A Lipschitz function is differentiable almost everywhere.  
At every boundary point \(\omega\in\partial\Omega\) with a \emph{unique} supporting line
\(L=\{(x,y)\mid px-qy=1\}\) (necessarily with \((p,q)\in\partial\Omega^\diamond\)),
set
\[
L=\{l_t=\omega+t(q,p)\mid t\in\mathbb{R}\}.
\]
Because \(L\) is the only support line, we may write
\(l_t=\omega_t+\alpha_t\) where \(\omega_t=\partial\Omega\cap\{\lambda l_t\mid\lambda\in\mathbb{R}\}\)
and \(|\alpha_t|=o(t)\) as \(t\to0\).

Let \(\theta_t\) be the doubled signed area of the contour \(O\omega\omega_tO\)
and \(\theta=\theta_0\).  Then
\[
\theta_t=\theta+2|[Ol_t,O\omega]|+\beta_t,
\]
where \(\beta_t\) is the area of the convex contour \(\omega\omega_tl_t\);
hence \(|\beta_t|=O(t^2)\).

Write \((p,q)=(\cosh_{\Omega^\diamond}\eta,\sinh_{\Omega^\diamond}\eta)\) with
\(\{\eta\}=\theta^\diamond\).  A direct computation gives
\begin{align*}
\frac{\partial(\cosh_\Omega,\sinh_\Omega)}{\partial\theta}
  &=\lim_{t\to0}
    \frac{(\cosh_\Omega\theta_t,\sinh_\Omega\theta_t)-(\cosh_\Omega\theta,\sinh_\Omega\theta)}
         {\theta_t-\theta}\\[2mm]
  &=\lim_{t\to0}
    \frac{(\sinh_{\Omega^\diamond}\eta,\cosh_{\Omega^\diamond}\eta)t-\alpha_t}
         {2|[Ol_t,O\omega]|+\beta_t}\\[2mm]
  &=\lim_{t\to0}
    \frac{(\sinh_{\Omega^\diamond}\eta,\cosh_{\Omega^\diamond}\eta)t+o(t)}
         {t+O(t^{2})}
  =(\sinh_{\Omega^\diamond}\eta,\cosh_{\Omega^\diamond}\eta).
\end{align*}

\medskip\noindent
\emph{Step 3: points with several supports.}
If \(\omega\) admits more than one supporting line, these lines correspond to the elements
\(\{(\cosh_{\Omega^\diamond}\eta,\sinh_{\Omega^\diamond}\eta)\mid\eta\in\theta^\diamond\}\), which form a segment; its endpoints give the right- and left-hand tangents to \(\Omega\) at \(\omega\).  Repeating the above argument with one-sided limits \(t\to0^{+}\) and \(t\to0^{-}\) yields the remaining part of the theorem.
\end{proof}

Any point \((x,y)\in\mathbb{R}_{+}\Omega\) can be written in the form  
\[
(x,y)=R\bigl(\cosh_\Omega\theta,\sinh_\Omega\theta\bigr),\qquad R\ge 0.
\]

\begin{proposition}
Let the curve \((x,y)(t)\in\mathbb{R}_{+}\Omega\) be absolutely continuous and never pass through the origin.  
Then the functions \(R(t)\) and \(\theta(t)\) defined by
\((x,y)=R(\cosh_\Omega\theta,\sinh_\Omega\theta)\)
are also absolutely continuous.

If, at the value \(\theta=\theta(t)\), the functions \(\cosh_\Omega,\sinh_\Omega\) are differentiable\footnote{Equivalently, the set \(\theta^\diamond\) consists of a single element.}, then
\[
\dot R
   =\dot x\,\cosh_{\Omega^\diamond}\eta
     -\dot y\,\sinh_{\Omega^\diamond}\eta,
\qquad
\dot\theta
   =\frac{\dot y\,\cosh_\Omega\theta-\dot x\,\sinh_\Omega\theta}{R},
\qquad
\{\eta\}=\theta^\diamond.
\]
\end{proposition}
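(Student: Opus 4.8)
The plan is to split the argument into two parts: first to show that $R(t)$ and $\theta(t)$ are absolutely continuous, and then, at almost every $t$ where $\cosh_\Omega,\sinh_\Omega$ are differentiable at $\theta(t)$, to obtain the two derivative formulas by differentiating the defining identities $x=R\cosh_\Omega\theta$, $y=R\sinh_\Omega\theta$ and solving the resulting $2\times2$ linear system for $(\dot R,\dot\theta)$. The observation that makes the second part clean is that, writing $\{\eta\}=\theta^\diamond$, the determinant of that system equals $R\bigl(\cosh_\Omega\theta\,\cosh_{\Omega^\diamond}\eta-\sinh_\Omega\theta\,\sinh_{\Omega^\diamond}\eta\bigr)=R$ by Proposition~\ref{antipolar_corresp}; hence the system is invertible whenever $R>0$, which holds throughout since the curve avoids the origin.

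For absolute continuity I would first record that $R=\nu_\Omega(x,y)$, where $\nu_\Omega$ is the anti-norm with unit ball $\Omega$ (Lemma~\ref{lm:antinorm_ball}): indeed $\nu_\Omega\equiv1$ on $\partial\Omega$ and, by positive homogeneity, $\nu_\Omega\bigl(R(\cosh_\Omega\theta,\sinh_\Omega\theta)\bigr)=R$. Next I would observe that the representation forces the curve into the interior of the cone $C=\mathrm{cl}(\mathbb{R}_+\Omega)$: property~\hyperref[dual_assmps]{($*$)} precludes $\Omega$ from meeting $\partial C$, since a point of $\Omega$ on a boundary ray would, by the ray property, generate a whole ray lying in $\partial\Omega$; thus $\mathbb{R}_+\Omega\setminus\{0\}\subset\mathrm{int}\,C$. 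As $(x,y)(t)$ is continuous on a compact interval and never hits the origin, its image $K$ is a compact subset of $\mathrm{int}\,C\setminus\{0\}$, hence bounded away from both the origin and $\partial C$; on a neighbourhood of $K$ the finite concave function $\nu_\Omega$ is Lipschitz, so $R(t)=\nu_\Omega\bigl((x,y)(t)\bigr)$ is absolutely continuous. For $\theta$ I would pass through the ordinary polar angle $\varphi(t)=\arg\bigl((x,y)(t)\bigr)$, which is locally Lipschitz in $(x,y)$ away from the origin and hence absolutely continuous on $K$; by Step~1 of the proof of Theorem~\ref{thrm: hyp_trig_derivatives} the doubled area $\theta$ is a strictly monotone, locally Lipschitz function of $\varphi$ with locally Lipschitz inverse, so $\theta(t)=\theta(\varphi(t))$ is absolutely continuous as well.

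For the derivative formulas, fix a $t$ at which $\dot x,\dot y,\dot R,\dot\theta$ all exist and $\theta^\diamond=\{\eta\}$. Since $\cosh_\Omega,\sinh_\Omega$ are then differentiable at $\theta(t)$ and $\theta$ is differentiable at $t$, the ordinary pointwise chain rule applies, and Theorem~\ref{thrm: hyp_trig_derivatives} gives $\tfrac{d}{dt}\cosh_\Omega\theta=\sinh_{\Omega^\diamond}\eta\,\dot\theta$ and $\tfrac{d}{dt}\sinh_\Omega\theta=\cosh_{\Omega^\diamond}\eta\,\dot\theta$. Differentiating $x=R\cosh_\Omega\theta$ and $y=R\sinh_\Omega\theta$ yields the linear system
\[
    \begin{pmatrix}\dot x\\ \dot y\end{pmatrix}
    =\begin{pmatrix}\cosh_\Omega\theta & R\sinh_{\Omega^\diamond}\eta\\[2pt]
                    \sinh_\Omega\theta & R\cosh_{\Omega^\diamond}\eta\end{pmatrix}
     \begin{pmatrix}\dot R\\ \dot\theta\end{pmatrix},
\]
whose determinant is $R$ as noted above. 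Cramer's rule then gives $\dot R=\dot x\,\cosh_{\Omega^\diamond}\eta-\dot y\,\sinh_{\Omega^\diamond}\eta$ and $\dot\theta=(\dot y\,\cosh_\Omega\theta-\dot x\,\sinh_\Omega\theta)/R$, exactly the claimed expressions.

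The hard part will be the absolute continuity of $\theta$, or more precisely the careful bookkeeping of the cone together with properties~\hyperref[dual_assmps]{($*$)} and~\hyperref[dual_assmps]{($**$)}, which is what guarantees that the curve stays in the region where $\nu_\Omega$ is Lipschitz and where the $\varphi\leftrightarrow\theta$ correspondence is bi-Lipschitz. By contrast the derivative computation is routine: because the chain rule is invoked only at points where $\cosh_\Omega,\sinh_\Omega$ are genuinely differentiable, no measure-theoretic subtlety about composing a merely Lipschitz function with an absolutely continuous one arises, and the final formulas follow immediately from the single algebraic identity $\cosh_\Omega\theta\,\cosh_{\Omega^\diamond}\eta-\sinh_\Omega\theta\,\sinh_{\Omega^\diamond}\eta=1$.
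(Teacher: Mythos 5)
Your proposal is correct, and its overall skeleton coincides with the paper's: identify \(R(t)\) with \(\nu_\Omega\bigl(x(t),y(t)\bigr)\) and use local Lipschitzness of the finite concave function \(\nu_\Omega\) on \(\mathrm{int}\,C\) to get absolute continuity of \(R\); then, at a point where \(R,\theta\) are differentiable and \(\theta^\diamond=\{\eta\}\), differentiate \(x=R\cosh_\Omega\theta\), \(y=R\sinh_\Omega\theta\) and solve the resulting \(2\times2\) system via the identity \(\cosh_\Omega\theta\,\cosh_{\Omega^\diamond}\eta-\sinh_\Omega\theta\,\sinh_{\Omega^\diamond}\eta=1\) (your Cramer's rule and the paper's multiply-and-add elimination are the same algebra). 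The one genuine divergence is the absolute continuity of \(\theta(t)\): the paper applies Green's formula to the normalized curve \((x/R,y/R)(t)\in\partial\Omega\), obtaining the explicit representation \(\theta(t)=\theta(t_0)+\int_{t_0}^{t}\bigl(x\dot y-\dot x y\bigr)/R^{2}\,ds\) with integrable integrand (since \(R\) is bounded away from zero); note that this already encodes the claimed derivative, because \(\bigl(\dot y\cosh_\Omega\theta-\dot x\sinh_\Omega\theta\bigr)/R=\bigl(x\dot y-y\dot x\bigr)/R^{2}\). You instead factor \(\theta(t)\) through the classical polar angle \(\varphi(t)\) and invoke the locally bi-Lipschitz correspondence \(\varphi\leftrightarrow\theta\) established in Step~1 of the proof of Theorem~\ref{thrm: hyp_trig_derivatives}; this is valid, since the compact image of the curve lies in \(\mathrm{int}\,C\setminus\{0\}\) (as you correctly deduce from property \((*)\) and the ray property), so all local Lipschitz constants can be taken uniform along the curve. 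Your route has the merit of reusing work already done and of making explicit, rather than leaving to a footnote, why the curve stays in the region where \(\nu_\Omega\) is Lipschitz; the paper's route is self-contained and yields the quantitative formula for \(\dot\theta\) as a by-product of the same computation.
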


\begin{proof}
Fix a moment \(t_0\).  
Because the support function \(\nu\) is concave, finite\footnote{Property \hyperref[main assumptions i * **]{$(*)$} implies that \(\nu\) vanishes only on \(\partial C\).} and bounded near the point \((x,y)(t_0)\), it is Lipschitz there; consequently the function \(R(t)=\nu\bigl(x(t),y(t)\bigr)\) is absolutely continuous in a neighbourhood of \(t_0\).

The function
\[
\frac{x(t)\dot y(t)-\dot x(t)y(t)}{R^{2}(t)}
\]
is integrable near \(t_0\) because \(R(t)\) is bounded away from zero.  
Applying Green’s formula to the curve
\(\bigl(\tfrac{x}{R},\tfrac{y}{R}\bigr)(t)\in\partial\Omega\) we obtain
\[
\theta(t)=\int_{t_0}^{t}
        \frac{x(s)\dot y(s)-\dot x(s)y(s)}{R^{2}(s)}\,ds+\theta(t_0),
\]
so \(\theta(t)\) is absolutely continuous as well.

Derivatives \(R'(t)\) and \(\theta'(t)\) exist for almost every \(t\).  
Assume they exist at the chosen \(t\) and that \(\theta(t)^\diamond=\{\eta\}\), i.e.\ \(\cosh_\Omega,\sinh_\Omega\) are differentiable at \(\theta(t)\).  Then
\[
\dot x
   =\frac{d}{dt}\!\bigl(R\cosh_\Omega\theta\bigr)
   =\dot R\,\cosh_\Omega\theta
     +R\,\sinh_{\Omega^\diamond}\eta\,\dot\theta,
\qquad
\dot y
   =\frac{d}{dt}\!\bigl(R\sinh_\Omega\theta\bigr)
   =\dot R\,\sinh_\Omega\theta
     +R\,\cosh_{\Omega^\diamond}\eta\,\dot\theta.
\]

Multiply the first equality by \(\cosh_{\Omega^\diamond}\eta\), the second by \(-\sinh_{\Omega^\diamond}\eta\), and add.  Using Proposition~\ref{antipolar_corresp},
\[
\cosh_\Omega\theta\,\cosh_{\Omega^\diamond}\eta
   -\sinh_\Omega\theta\,\sinh_{\Omega^\diamond}\eta
   =1,
\]
we obtain the formula for \(\dot R\); substituting it back gives the expression for \(\dot\theta\).
\end{proof}

\section{Lightlike and timelike extremals}
\label{sec: light_time_like}

In this section we prove a theorem that splits extremals in a sub-Lorentzian problem into \emph{timelike} and \emph{lightlike} classes.

\medskip
We start with two auxiliary propositions that will be used together with the Pontryagin Maximum Principle (PMP) when searching for extremals of sub-Lorentzian problems.

\begin{proposition}
    \label{pmp_antinorms}
    Let $h=(h_1,h_2)\in\mathbb{R}^{2*}$, $h\neq0$, let $\nu$ be an antinorm on the plane (Definition~\ref{defn: antinorm}), and let $\Omega$ be its unit ball (Definition~\ref{defn: antinorm_ball}),
    \[
      \Omega=\{u\in\mathbb{R}^2\mid \nu(u)\ge1\}.
    \]
    Define for $u\in\dom\nu=C=\mathrm{cl}\,\mathbb{R}_+\Omega$
    \[
      \mathcal{H}(h,u)=\langle h,u\rangle+\nu(u).
    \]
    Then $\sup_{u\in C}\mathcal{H}(h,u)$ is finite, and
    \[
      \sup_{u\in C}\mathcal{H}(h,u)=0
      \Longleftrightarrow
      (-h_1,h_2)\in\Omega^\diamond .
    \]
    In this case $\mathcal{H}(h,u)\le0$ for all $u\in C$ and $\mathcal{H}(h,0)=0$.
\end{proposition}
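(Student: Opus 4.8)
The plan is to reduce everything to positive homogeneity and then read off the antipolar condition by rescaling. First I would observe that, since $\nu$ is positively homogeneous and $\langle h,\cdot\rangle$ is linear, $\mathcal H(h,\lambda u)=\lambda\,\mathcal H(h,u)$ for every $\lambda\ge 0$; thus $\mathcal H(h,\cdot)$ is positively homogeneous of degree one on the cone $C$. Because $0\in C$ and $\nu(0)=0$ (established in the proof of Lemma~\ref{lm:antinorm_ball}), we get $\mathcal H(h,0)=0$ and hence $\sup_{u\in C}\mathcal H(h,u)\ge 0$. If $\mathcal H(h,u_0)>0$ for some $u_0\in C$, then $\mathcal H(h,\lambda u_0)=\lambda\,\mathcal H(h,u_0)\to+\infty$, so the supremum is $+\infty$. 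Consequently the supremum can only equal $0$ or $+\infty$; it is finite iff it equals $0$ iff $\mathcal H(h,u)\le 0$ for all $u\in C$. This dichotomy already delivers the closing assertions $\mathcal H(h,u)\le 0$ on $C$ and $\mathcal H(h,0)=0$, so it remains to characterise when $\mathcal H(h,\cdot)\le 0$ on $C$.

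Next I would rewrite the antipolar membership. By Definition~\ref{defn: antipolar_set}, $(-h_1,h_2)\in\Omega^\diamond$ means $(-h_1)x-h_2y\ge 1$ for all $(x,y)\in\Omega$; since $(-h_1)x-h_2y=-\langle h,(x,y)\rangle$, this says precisely that $\langle h,u\rangle\le-1$ for every $u\in\Omega$. (The minus sign built into the antipolar is exactly what makes this clean.) The proposition therefore reduces to the single equivalence
\[
    \mathcal H(h,u)\le 0\ \ \forall u\in C
    \quad\Longleftrightarrow\quad
    \langle h,u\rangle\le-1\ \ \forall u\in\Omega .
\]

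The forward direction is immediate: restricting to $\Omega\subset C$ and using $\nu\ge 1$ on $\Omega$ (Definition~\ref{defn: antinorm_ball}) gives $\langle h,u\rangle\le-\nu(u)\le-1$. For the converse I would split by the value of $\nu$. If $u\in C$ has $\nu(u)>0$, then $u/\nu(u)$ belongs to $\Omega=\{\nu\ge 1\}$, so the hypothesis yields $\langle h,u/\nu(u)\rangle\le-1$, and multiplying by $\nu(u)>0$ gives $\mathcal H(h,u)=\langle h,u\rangle+\nu(u)\le 0$. The only remaining points are the lightlike boundary directions $u\in C$ with $\nu(u)=0$, which I would treat by approximation: choose $v\in\operatorname{ri}C$ with $\nu(v)>0$ and set $w_s=(1-s)v+su$, so that $w_s\in\operatorname{ri}C$ and $\nu(w_s)>0$ for $s\in[0,1)$; the case just handled gives $\langle h,w_s\rangle+\nu(w_s)\le 0$, and I would then let $s\to1^-$.

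The crux — and the step I expect to be the main obstacle — is passing to this limit, i.e.\ showing $\nu(w_s)\to\nu(u)$ as $s\to1^-$; upper semicontinuity alone points the wrong way, so closedness of $\nu$ must be used to secure genuine continuity of $\nu$ along the segment up to its endpoint on $\partial C$. The argument I have in mind: $g(s)=\nu(w_s)$ is a finite concave function on $[0,1]$, hence continuous on $(0,1)$ with a left limit $L=\lim_{s\to1^-}g(s)$ satisfying $g(1)\le L$ (a concave function can only drop at the right endpoint), while closedness gives $g(1)=\nu(u)\ge\limsup_{s\to1^-}\nu(w_s)=L$; the two inequalities force $g(1)=L$, so $\nu(w_s)\to\nu(u)$. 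Since also $\langle h,w_s\rangle\to\langle h,u\rangle$, the inequality $\langle h,w_s\rangle+\nu(w_s)\le 0$ survives the limit, giving $\mathcal H(h,u)\le 0$. Together with the forward implication and the homogeneity dichotomy of the first step, this proves the equivalence and hence the proposition.
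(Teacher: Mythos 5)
Your proof is correct, and its skeleton coincides with the paper's: both establish the dichotomy $\sup_{u\in C}\mathcal H(h,u)\in\{0,+\infty\}$ from positive homogeneity together with $\mathcal H(h,0)=0$, both translate $(-h_1,h_2)\in\Omega^\diamond$ into the inequality $\langle h,u\rangle\le-1$ on $\Omega$, and both pass between $\Omega$ and the cone by rescaling $u$ to unit antinorm. The one genuine divergence is exactly the step you flag as the crux: extending $\mathcal H(h,\cdot)\le0$ from $\mathbb{R}_+\Omega$ to its closure $C$. The paper disposes of this in a single clause --- ``because $\mathcal H(h,\cdot)$ is continuous on $C$'' --- which is not immediate: by Definition~\ref{defn: antinorm} an antinorm is only upper semicontinuous, and, as you observe, upper semicontinuity points the wrong way for pushing an upper bound to the boundary. (The paper's continuity claim is in fact true, but for a non-obvious reason: a closed convex cone in the plane is polyhedral, and a closed concave function is continuous relative to any polyhedral subset of its domain.) Your replacement --- restricting $\nu$ to a segment from a point of $\operatorname{ri}C$ with $\nu>0$ to the boundary point and playing the concavity inequality $g(1)\le\lim_{s\to1^-}g(s)$ against the closedness inequality $g(1)\ge\limsup_{s\to1^-}g(s)$ --- is self-contained, works in any dimension, and in effect re-proves the segment-continuity of closed concave functions in the special case needed. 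So at the single point where the paper's proof leans on an unjustified continuity assertion, your argument is the more complete of the two; everywhere else the routes are the same.
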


\begin{proof}
The supremum is either $0$ or $+\infty$.  
If some $u\in C$ satisfies $\mathcal{H}(h,u)>0$, then
$\lambda u\in C$ for every $\lambda\ge0$ and
$\mathcal{H}(h,\lambda u)=\lambda\mathcal{H}(h,u)\to+\infty$,
so the supremum is infinite.
Conversely, $\mathcal{H}(h,0)=0$, hence the supremum cannot be negative.

Because $\mathcal{H}(h,\cdot)$ is continuous on~$C$,
\[
  \sup_{u\in C}\mathcal{H}(h,u)=0
  \Longleftrightarrow
  \langle h,u\rangle+\nu(u)\le0\quad\forall u\in\Omega .
\]
Since $\nu(u)\ge1$ on $\Omega$, the last inequality is equivalent to
$\langle-h,u\rangle\ge1$ on $\Omega$, i.e.\ $(-h_1,h_2)\in\Omega^\diamond$.
The reverse implication is obtained by scaling $u$ to unit antinorm.
\end{proof}

\begin{proposition}
\label{H_argmax}
Under the assumptions of Proposition~\ref{pmp_antinorms}, suppose in addition that
$\Omega$ satisfies Assumption~\ref{dual_assmps}. Then
\begin{enumerate}
\item If $h\in\mathrm{int}\,\Omega^\diamond$, the maximum of $\mathcal{H}(h,u)$ over $u\in C$ is attained only at $u=0$.
\item If $h\in\partial\Omega^\diamond$, write $(-h_1,h_2)=(\coshi_{\Omega^\diamond}\eta,\sinhi_{\Omega^\diamond}\eta)$ for some ``angle'' $\eta\in\mathbb{R}$.  
      Then $\max_{u\in C}\mathcal{H}(h,u)$ is attained at every point of
      \[
        \{\lambda(\coshi_\Omega\theta,\sinhi_\Omega\theta)\mid\lambda\ge0,\ \theta\in{}^\diamond\eta\}.
      \]
\end{enumerate}
\end{proposition}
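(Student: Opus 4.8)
The idea is to read off the maximisers from the equality case of the antipolar inequality, after parametrising the cone $C$ by $\coshi,\sinhi$. I would first set $\tilde h=(-h_1,h_2)$ and observe that, by Proposition~\ref{pmp_antinorms}, the standing hypothesis $h\neq0$ of either item amounts to $\tilde h\in\Omega^\diamond$; since the reflection $(p,q)\mapsto(-p,q)$ preserves both interior and boundary, the two items should be read as $\tilde h\in\mathrm{int}\,\Omega^\diamond$ and $\tilde h\in\partial\Omega^\diamond$ respectively (this is consistent with the display $(-h_1,h_2)=(\cosho,\sinho)$ appearing in item~2). In both cases Proposition~\ref{pmp_antinorms} gives $\mathcal{H}(h,u)\le0$ on $C$ with $\mathcal{H}(h,0)=0$, so the maximal value is $0$ and the whole task reduces to describing $M=\{u\in C:\mathcal{H}(h,u)=0\}$. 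As $\langle h,\cdot\rangle$ and $\nu$ are positively homogeneous, so is $\mathcal{H}(h,\cdot)$; hence $M$ is a cone that contains $0$ and is stable under $\mathbb{R}_+$, and it suffices to decide which \emph{directions} are optimal.

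Next I would split the directions of $C$ using Assumption~\ref{dual_assmps}: property $(*)$ makes $\nu$ vanish exactly on the two boundary rays $l_0,l_1$ and be strictly positive on the relative interior. A relative-interior direction is $\omega=(\coshi,\sinhi)\in\partial\Omega$ with $\nu(\omega)=1$, where $\mathcal{H}(h,\omega)=\langle h,\omega\rangle+1=1-(\tilde h_1\coshi-\tilde h_2\sinhi)$; thus $\omega$ is optimal iff $\tilde h_1\coshi-\tilde h_2\sinhi=1$, i.e.\ iff $\omega\in L(\tilde h)$. For a boundary-ray direction $e$ one has $\nu(e)=0$ and $\mathcal{H}(h,e)=\langle h,e\rangle=-(\tilde h_1 e_x-\tilde h_2 e_y)$. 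To rule these lightlike directions out, I would write $e=\lim_k\omega_k/|\omega_k|$ with $\omega_k\in\Omega$, $|\omega_k|\to\infty$; dividing $\tilde h_1(\omega_k)_x-\tilde h_2(\omega_k)_y\ge1$ by $|\omega_k|$ gives $\tilde h_1 e_x-\tilde h_2 e_y\ge0$, and Lemma~\ref{lemma:**_not_parallel} (property $(**)$ for $\Omega$) shows $L(\tilde h)$ is never parallel to $l_0,l_1$, so this quantity is nonzero, hence $>0$. Therefore $\mathcal{H}(h,e)<0$ on every boundary ray in both items, so no lightlike direction is optimal; when $\dim\Omega=1$ the cone is a single ray and this step is vacuous.

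It then remains to treat the $\partial\Omega$-directions case by case. In item~1 I would prove the strict inequality $\tilde h_1 x-\tilde h_2 y>1$ for all $(x,y)\in\Omega$: if equality held at some $\omega\in\Omega$, then—using $(\omega_x,\omega_y)\neq0$ (because $0\notin\Omega$)—perturbing $\tilde h$ in the direction $(-\omega_x,\omega_y)$ would push it out of $\Omega^\diamond$, forcing $\tilde h\in\partial\Omega^\diamond$ and contradicting the hypothesis. Hence $\mathcal{H}(h,\omega)<0$ on every such direction and $M=\{0\}$. In item~2, $\tilde h=(\cosho,\sinho)\in\partial\Omega^\diamond$, and by Proposition~\ref{antipolar_corresp} the equality $\coshi\cosho-\sinhi\sinho=1$ holds exactly for $\theta\in{}^\diamond\eta$; this set is non-empty by Proposition~\ref{prop:good_points}(2). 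Combining with the ray/origin structure of $M$ yields $M=\{\lambda(\coshi,\sinhi)\mid\lambda\ge0,\ \theta\in{}^\diamond\eta\}$, which is the asserted set.

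The hard part will be the two places where the dual assumptions are genuinely used: the strict antipolar inequality in the interior case, and the systematic exclusion of the lightlike boundary rays. Both hinge on properties $(*)$ and $(**)$—through the interior/boundary criterion for $\Omega^\diamond$ and through Lemma~\ref{lemma:**_not_parallel}—and are exactly the steps that would fail without them (e.g.\ a boundary ray with $L(\tilde h)$ parallel to it could otherwise contribute an unexpected lightlike maximiser). Everything else is the homogeneity bookkeeping and the substitution $\tilde h_1\coshi-\tilde h_2\sinhi=\coshi\cosho-\sinhi\sinho$, which is routine.
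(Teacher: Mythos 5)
Your proposal is correct and essentially reproduces the paper's argument: both proofs reduce to the maximal value being $0$ via Proposition~\ref{pmp_antinorms}, rule out maximisers on the light-like rays $\partial C$ using properties $(*)$ and $(**)$ (the paper does this very tersely, you via the normalized-limit sign argument plus Lemma~\ref{lemma:**_not_parallel}), and then identify the remaining maximisers in $\mathbb{R}_{+}\Omega$ through the equality case of inequality~\eqref{hyp_trig_ineq}, i.e.\ Proposition~\ref{antipolar_corresp}. The only substantive difference is in item~1: the paper writes $(-h_1,h_2)=K\bigl(\cosh_{\Omega^\diamond}\eta,\sinh_{\Omega^\diamond}\eta\bigr)$ with $K\ge1$ and concludes from $\mathcal{H}(h,u)\le R(1-K)$ that $K>1$ excludes nonzero maximisers, whereas you obtain the same strict inequality by perturbing $\tilde h$ in the direction $(-\omega_x,\omega_y)$ to contradict interiority in $\Omega^\diamond$ --- both variants are sound.
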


\begin{proof}
If $(-h_1,h_2)\in\Omega^\diamond$ and the maximum were reached at $u\in\partial C$, then
$\nu(u)=0$ by property \hyperref[main assumptions i * **]{$(*)$} and hence
$\mathcal{H}(h,u)=0$, contradicting $h\notin\Omega^\diamond$; thus $u\in\mathbb{R}_{+}\Omega$.
Writing $u=R(\coshi_\Omega\theta,\sinhi_\Omega\theta)$,  
$h=K(-\coshi_{\Omega^\diamond}\eta,\sinhi_{\Omega^\diamond}\eta)$ with $K\ge1$,
inequality~\eqref{hyp_trig_ineq} gives
\[
\mathcal{H}(h,u)\le R(1-K),
\]
with equality exactly when $\theta\in{}^\diamond\eta$; the bound is $0$ only if $K=1$.
\end{proof}

\medskip
\subsection*{Setting of the control problem}

Let $M$ be a smooth manifold of arbitrary finite dimension, and let
$f_1,f_2$ be smooth vector fields that are linearly independent at every point.
Put $\Delta=\langle f_1,f_2\rangle$ and fix an antinorm~$\nu$ in~$\mathbb{R}^2$.
Consider the problem of maximising the \emph{length}
\[
  \int_0^{T}\!\nu(u_1,u_2)\,dt\;\longrightarrow\;\max
\]
over Lipschitz curves satisfying
\[
  q(0)=q_0,\quad q(T)=q_1,\quad \dot q=u_1f_1+u_2f_2,
  \qquad T>0,\ (u_1,u_2)\in\mathbb{R}^2 .
\]

\smallskip
Applying Pontryagin’s Maximum Principle we write
\[
  \dot p=-\mathcal{H}'_q,\qquad
  \dot q=\mathcal{H}'_p,\qquad
  \mathcal{H}(q,p,u)\to\max_{u\in\mathbb{R}^2},
\]
where
\[
  \mathcal{H}(q,p,u)=\langle p,u_1f_1+u_2f_2\rangle-\lambda_0\nu(u_1,u_2),
  \qquad \lambda_0\in\{0,-1\}.
\]

Introduce
\[
  h_k=\langle p,f_k\rangle,\ k=1,2,\quad
  h_3=\langle p,[f_1,f_2]\rangle=\{h_1,h_2\},
\]
so that
\[
  \mathcal{H}(q,p,u)=\langle h,u\rangle-\lambda_0\nu(u),
  \qquad h=(h_1,h_2),\ u=(u_1,u_2).
\]

Note that $\dot q(t)=0$ iff $u(t)=0$; by re-parameterising the curve and adjusting~$T$ we can assume $u(t)\neq0$ for a.e.\ $t$.

\begin{theorem}
\label{thrm: light_space_like}
Let the unit ball $\Omega=\{u\in\mathbb{R}^2\mid\nu(u)\ge1\}$ satisfy Assumption~\ref{dual_assmps}.  
Let $(q(t),p(t),u(t))$ be a PMP extremal with $u(t)\neq0$ a.e.
\begin{enumerate}
\item
  If $\lambda_0=-1$ then $\displaystyle\int_0^{T}\!\nu(u)\,dt>0$.  
  In this \emph{timelike} case $u(t)\in\mathbb{R}_{+}\Omega$ a.e., so $\nu(u)=1$ a.e.\ under natural parametrisation.  
  Then $(-h_1,h_2)\in\Omega^\diamond$ and
  \[
     (h_1,h_2)=\bigl(-\coshi[\eta],\sinhi[\eta]\bigr)
     \quad\text{for some }\eta\in\mathbb{R},
     \qquad
     u=\bigl(\coshi_\Omega\theta,\sinhi_\Omega\theta\bigr),
     \quad\theta\in{}^\diamond\eta .
  \]
\item
  If $\lambda_0=0$ (the \emph{lightlike} case) and $(h_1,h_2)(t)\neq0$ at some time~$t$, then
  \[
    (h_1,h_2)(t)\in\partial C^{*},\qquad
    u(t)\in\partial C,\qquad
    \langle h(t),u(t)\rangle=0,
  \]
  where $C=\dom\nu=\mathrm{cl}\,\mathbb{R}_{+}\Omega$ and
  $C^{*}=\{h\in\mathbb{R}^{2*}\mid\langle h,u\rangle\le0\;\forall u\in C\}$.
  If $h_3(t)$ changes sign only finitely many times, say $k$ times, then the control can switch from one open ray of $\partial C^{*}$ to the other at most $k+1$ times.
\end{enumerate}
\end{theorem}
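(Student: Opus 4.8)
The plan is to obtain assertions~1 and~2 almost directly from Propositions~\ref{pmp_antinorms} and~\ref{H_argmax}, and then to devote the real effort to the planar dynamics of $h=(h_1,h_2)$ that underlies the switching bound. Throughout I will use that along any extremal the vertical variables satisfy
\[
    \dot h_1=-h_3u_2,\qquad \dot h_2=h_3u_1,
\]
i.e.\ $\dot h=h_3\,Ju$ with $J(u_1,u_2)=(-u_2,u_1)$, exactly as in~\eqref{eq:heisenberg_adjoint_system} (this comes from $\dot h_k=\{h_k,\mathcal H\}$ together with $\{h_1,h_2\}=h_3$); these equations are needed only for part~2.

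\emph{Timelike case $\lambda_0=-1$.} Here $\mathcal H=\langle h,u\rangle+\nu(u)$, so by Proposition~\ref{pmp_antinorms} the supremum over $u\in C$ is either $0$ or $+\infty$; since the PMP maximum is attained it must equal $0$, which happens precisely when $(-h_1,h_2)\in\Omega^\diamond$. Because $u(t)\neq0$ a.e., the maximiser is not the point $u=0$, so the interior alternative of Proposition~\ref{H_argmax} (where the unique maximiser is $0$) is excluded; hence $(-h_1,h_2)\in\partial\Omega^\diamond$ a.e. The boundary alternative of Proposition~\ref{H_argmax} then gives $(-h_1,h_2)=(\cosho[\eta],\sinho[\eta])$ and $u\in\mathbb R_+\{(\coshi[\theta],\sinhi[\theta])\mid\theta\in{}^\diamond\eta\}$. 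In particular $u\in\mathbb R_+\Omega\setminus\{0\}$, so $\nu(u)>0$ a.e.\ and $\int_0^T\nu(u)\,dt>0$; rescaling to the natural parametrisation yields $\nu(u)\equiv1$. This is exactly assertion~1.

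\emph{Lightlike case $\lambda_0=0$.} Now $\mathcal H=\langle h,u\rangle$ on the cone $C$. As $C$ is a cone, the supremum is finite (hence $=0$) iff $\langle h,u\rangle\le0$ for all $u\in C$, i.e.\ $h\in C^{*}$. Fix $t$ with $h(t)\neq0$. Since the maximiser $u(t)\neq0$ attains the value $0$, we get $\langle h,u\rangle=0$. For $h\in C^{*}\setminus\{0\}$ one has $\langle h,\cdot\rangle<0$ on $\operatorname{int}C$ (an interior zero would produce nearby points with positive pairing), so $\langle h,u\rangle=0$ forces $u(t)\in\partial C$; and if $h\in\operatorname{int}C^{*}$ then $\langle h,\cdot\rangle<0$ on all of $C\setminus\{0\}$, contradicting $\langle h,u\rangle=0$, whence $h(t)\in\partial C^{*}$. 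This proves the three displayed relations of assertion~2. For the switching bound write $\partial C^{*}=r_0\cup r_1$ for the two boundary rays, labelled so that $r_i$ is dual to the boundary ray $l_i$ of $C$ (thus $\langle r_i,l_i\rangle=0$ and, by the above, $h\in r_i\Leftrightarrow u\in l_i$). Since $h(t)$ is continuous and lies in $r_0\cup r_1$ wherever $h\neq0$, a change of the active ray — equivalently a switch of the control between $l_0$ and $l_1$ — can occur only at an instant when $h$ passes through the origin. To bound these instants I track the radial speed $\tfrac{d}{dt}|h|$ on each ray. Because $Ju\perp u$ and $r_i\perp l_i$, the velocity $\dot h=h_3Ju$ is tangent to the current ray; and since $C,C^{*},J$ and $h_3$ are all $SO(2)$-equivariant, a rotation of the plane lets us take $C$ symmetric about the positive $x$-axis, where a direct computation gives $\tfrac{d}{dt}|h|=-h_3|u|$ on $r_0$ and $\tfrac{d}{dt}|h|=+h_3|u|$ on $r_1$. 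Hence $h$ approaches the origin on $r_0$ exactly when $h_3>0$, and on $r_1$ exactly when $h_3<0$. A switch $r_0\to r_1$ requires approach along $r_0$ and departure along $r_1$, forcing $h_3>0$ on a punctured neighbourhood of the switching time (using $|u|>0$ a.e.); symmetrically a switch $r_1\to r_0$ forces $h_3<0$. Consecutive switches alternate in type, hence in the required sign of $h_3$, so between two successive switching times $h_3$ passes from positive to negative values (or vice versa) and thus changes sign at least once. With $N$ switches the $N-1$ inter-switch intervals are pairwise disjoint and each carries a distinct sign change of $h_3$, giving $k\ge N-1$, i.e.\ $N\le k+1$, as claimed.

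\emph{Main obstacle.} Assertions~1 and~2 are essentially immediate once Propositions~\ref{pmp_antinorms} and~\ref{H_argmax} are available; the substance is the switching bound. The crux is the single explicit sign computation — that the radial speed on each ray is $\mp h_3|u|$, so that the two rays demand \emph{opposite} signs of $h_3$ — which the $SO(2)$-reduction makes clean, together with the observation that control switches coincide with passages of $h$ through the origin. The remaining care is turning the alternation of switch types into a rigorous sign-change count: one must handle the degenerate possibilities ($h\equiv0$ or $h_3=0$ on a set of positive measure, or $h_3=0$ exactly at a switch) via the standing hypothesis $u\neq0$ a.e.\ and the continuity of $h_3$, and dispose of the trivial case $\dim\Omega=1$ (when $C$ is a single ray and no switch is possible).
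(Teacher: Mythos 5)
Your proof is correct and is essentially the paper's own argument: assertion~1 and the displayed relations of assertion~2 are obtained, exactly as in the paper, by combining the attainability of the PMP maximum with Propositions~\ref{pmp_antinorms} and~\ref{H_argmax}, and your switching bound rests on the same key mechanism the paper uses --- that where $h\in\partial C^{*}\setminus\{0\}$ the velocity $\dot h=h_3Ju$ is radial along the active ray, with the sign of the radial speed given by $h_3$ with \emph{opposite} conventions on the two rays (the paper encodes this as $u=\lambda(h_2,-h_1)$, hence $\dot h=\lambda h_3h$ with the sign of $\lambda$ fixed by the ray) --- so that your count (each pair of consecutive switches forces a sign change of $h_3$ in between, giving $N-1\le k$) is just the contrapositive of the paper's count (at most one passage of $h$ through the origin on each interval where $h_3$ keeps a fixed sign, giving at most $k+1$ switches). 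One cosmetic caveat: since $\tfrac{d}{dt}|h|=\pm h_3|u|$ holds only almost everywhere, approach to and departure from the origin pin down the sign of $h_3$ only on a positive-measure subset of every one-sided punctured neighbourhood of a switching time, not on a whole punctured neighbourhood as you wrote; this weaker statement is all your sign-change count needs, so the argument stands.
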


\begin{proof}
Because $\Omega$ is the unit ball of an antinorm, it satisfies Assumption~\ref{main_assumption};
Assumption~\ref{dual_assmps} holds by hypothesis.

\smallskip\noindent
\emph{(1)} Set $\lambda_0=-1$.  
If $h=(h_1,h_2)=0$ at some instant, the supremum of $\mathcal{H}$ is $+\infty$.
Otherwise, Proposition~\ref{H_argmax} applies and yields the stated description.

\smallskip\noindent
\emph{(2)} For any $\lambda_0\in\{0,-1\}$ the maximised value of $\mathcal{H}$
over $u\in C$ is always~$0$: if some $u$ gave $\mathcal{H}>0$, the supremum would be infinite; meanwhile $\mathcal{H}(h,0)=0$.  
With $\lambda_0=0$ and $h\neq0$,  
\[
  \max_{u\in C}\mathcal{H}(h,u)=\langle h,u\rangle
  \quad\text{occurs iff } h\in C^{*},
\]
otherwise a $u\in C$ would give $\mathcal{H}>0$.

If $h\in\mathrm{int}\,C^{*}$, the maximum is attained only at $u=0$, contradicting $u(t)\neq0$ a.e.; thus such $h$ cannot occur after any time~$\tau$.

If $h=0$, any $u\in C$ maximises $\mathcal{H}$.
If $h\in\partial C^{*}$, the maximum is achieved at every $u$ on the ray of
$\partial C$ satisfying $\langle h,u\rangle=0$; hence
$u=\lambda(h_2,-h_1)$ with $\lambda$ of the sign determined by the chosen ray.

Assume $h\neq0$ at some $\tau$.  
By PMP we have
\[
  \dot h_1=-u_2\,h_3,\quad
  \dot h_2= u_1\,h_3,
  \quad h_3=\langle p,[f_1,f_2]\rangle .
\]
Since $p$ is Lipschitz and $f_k$ are smooth, $h_3$ is Lipschitz, hence continuous.  
Near~$\tau$,
$\dot h_1=\lambda(t)h_1h_3,\ \dot h_2=\lambda(t)h_2h_3$ for some $\lambda(t)\neq0$,
so $h(t)$ stays on the same ray of $\partial C^{*}$ until possibly crossing the origin when $h(t)=0$.  
While $h_3(t)$ keeps a fixed sign, $h(t)$ can cross the origin—and hence switch
to the opposite ray—at most once.  
Therefore, if $h_3$ changes sign $k$ times, at most $k+1$ such switches can occur.
\end{proof}

\section{The Lorentzian problem on the Lobachevsky plane}
\label{sec:lobachevski}

In this paragraph we give a complete description of extremals for the Lorentzian
problems of Section~\ref{subl: affr} on the Lobachevsky plane
$\mathrm{Aff}_+\mathbb{R}$.  Because on the Lobachevsky plane the set
$U=\{(u_1,u_2)\in\mathbb{R}^2\mid\nu_\UU(u)\ge1\}$ is two-dimensional, we shall
write $\Omega=U$ throughout this paragraph, so that the notation matches the
functions $\cosh_\Omega$ and $\sinh_\Omega$; for convenience we also set
$\nu=\nu_\Omega$.  Everywhere in this paragraph we assume that $\Omega$
satisfies Assumptions~\ref{main_assumption} and~\ref{dual_assmps}.

Recall that $\mathbb{R}_{+}=\{\lambda\in\mathbb{R}\mid\lambda\ge0\}$ and
$C=\mathrm{cl}\,\mathbb{R}_{+}\Omega$.

\medskip\noindent
We first write, in coordinates, the problem of finding length-maximising curves
for the Lorentzian metric on the Lobachevsky plane.  The velocity vector of an
admissible curve $q(t)=(a(t),b(t))\in\mathrm{Aff}_+\mathbb{R}
  =\{(a,b)\mid a,b\in\mathbb{R},\ b>0\}$
lies almost everywhere in the image of $C$ under the differential of the left
translation:
\[
  \dot q(t)=dL_{q(t)}u(t),\qquad u(t)\in C .
\]

The left translation on the group $\mathrm{Aff}_+(\mathbb{R})$ is
\[
  (L_{(a,b)}(c,d))x
     =(a,b)\bigl((c,d)x\bigr)
     =a+b(c+dx)=(a+bc,bd)x,
\]
so its differential is
\[
  dL_{(a,b)}=\begin{pmatrix} b&0\\[2pt] 0&b \end{pmatrix}.
\]

Hence we arrive at the optimal-control problem\footnote{The initial point can
be taken to be the unit element $(0,1)$ of the group.}
\begin{equation}
\label{eq:Lobachevski_problem_in_coordinates}
\begin{gathered}
  \int_0^T\!\nu(u)\,dt\ \longrightarrow\ \max,\\[4pt]
  a(0)=0,\; b(0)=1,\qquad a(T)=a_1,\; b(T)=b_1,\\[4pt]
  \begin{cases}
    \dot a=bu_1,\\
    \dot b=bu_2,
  \end{cases}\qquad
  u=(u_1,u_2)\in C .
\end{gathered}
\end{equation}
The corresponding vector fields are $f_1=b\,\partial/\partial a$,
$f_2=b\,\partial/\partial b$.

According to Pontryagin’s Maximum Principle, each length-maximising curve is
the projection to $\mathrm{Aff}_+\mathbb{R}$ of an extremal in the cotangent
bundle $T^*\mathrm{Aff}_+\mathbb{R}$.

The PMP Hamiltonian is
\[
  \mathcal{H}(q,p,u)
    =\langle p,bu\rangle-\lambda_0\nu(u)
    =\langle h,u\rangle-\lambda_0\nu(u),
\]
where $\lambda_0\in\{0,-1\}$ is constant along every extremal,
$h_k=\langle p,f_k\rangle=bp_k$ for $k=1,2$, and $h=(h_1,h_2)$.

Put $h_3=\{h_1,h_2\}=\langle p,[f_1,f_2]\rangle$.
Since $[f_1,f_2]=-f_1$, we have $h_3=-h_1$.

\begin{proposition}
\label{prop:time_or_light_like_affr}
For the problem under consideration the extremals split into two types:
\begin{enumerate}
\item \emph{normal} extremals, which are timelike (i.e.\ $u\in\mathbb{R}_{+}\Omega$
      almost everywhere);
\item \emph{abnormal} extremals, which are lightlike
      (the antinorm is zero almost everywhere).
\end{enumerate}
\end{proposition}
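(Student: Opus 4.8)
The plan is to read the proposition off Theorem~\ref{thrm: light_space_like}. Its hypotheses hold here: $\Omega$ satisfies Assumptions~\ref{main_assumption} and~\ref{dual_assmps} by our standing convention, and after the reparametrisation described above one may assume $u(t)\neq0$ for almost every~$t$. Since $\lambda_0$ is constant and lies in $\{0,-1\}$ along any extremal, every extremal is either normal ($\lambda_0=-1$) or abnormal ($\lambda_0=0$), so it suffices to match these two alternatives with the timelike and lightlike ones and to check that the matching is both exhaustive and exclusive. For a normal extremal I would invoke part~(1) of Theorem~\ref{thrm: light_space_like} verbatim: it yields $\int_0^T\nu(u)\,dt>0$ together with $u(t)\in\mathbb{R}_+\Omega$ almost everywhere, which is precisely the timelike case.

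The substantive point is the abnormal case, where I would exploit the low dimension of $\mathrm{Aff}_+\mathbb{R}$. The fields $f_1=b\,\partial/\partial a$ and $f_2=b\,\partial/\partial b$ form a basis of the tangent space at each point (since $b>0$), and $h_k=bp_k$; hence $h=(h_1,h_2)=0$ if and only if $p=0$. The nontriviality clause of the PMP guarantees that $(\lambda_0,p(t))$ never vanishes, so with $\lambda_0=0$ we obtain $p(t)\neq0$, and therefore $h(t)\neq0$, for \emph{every}~$t$ and not merely at some instant. This allows part~(2) of Theorem~\ref{thrm: light_space_like} to act at all times, placing $h(t)\in\partial C^{*}$ and $u(t)\in\partial C$ for almost every~$t$. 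Property~($*$) of Assumption~\ref{dual_assmps} states exactly that $\nu$ vanishes on $\partial C$, whence $\nu(u(t))=0$ almost everywhere and the extremal is lightlike, with $\int_0^T\nu(u)\,dt=0$.

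Exclusivity is then automatic, since timelike extremals have strictly positive length and lightlike ones zero length, which also confirms the dichotomy is exhaustive. The step I expect to demand the most care is the upgrade from ``$h\neq0$ at some time'' to ``$h\neq0$ along the entire extremal'' in the abnormal case: it is precisely the full-rank property of $\{f_1,f_2\}$ on this two-dimensional group that excludes the dangerous scenario in which $h$ vanishes on a set of positive measure, where the control could leave $\partial C$ and produce $\nu(u)>0$. Without this observation one could not rule out mixed timelike/lightlike behaviour, so it is the crux of the clean separation asserted by the proposition.
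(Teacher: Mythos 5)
Your proposal is correct and follows essentially the same route as the paper: both reduce the statement to Theorem~\ref{thrm: light_space_like} and both rest on the same key observation that $h=(h_1,h_2)=b\,(p_1,p_2)$ with $b>0$, so $h(t)\neq0$ follows from the PMP nontriviality condition $p(t)\neq0$ in the abnormal case, which lets part~(2) of that theorem apply along the whole extremal. The extra details you supply (checking the standing assumptions, citing property~($*$) to get $\nu\equiv0$ on $\partial C$, and the length-based exclusivity remark) are consistent with, and merely make explicit, what the paper's terser proof leaves implicit.
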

\begin{proof}
Apply Theorem~\ref{thrm: light_space_like} and observe that $h(t)\neq0$ for
a.e.\ $t$ when $\lambda_0=0$.  
Indeed, $h(t)=0$ at some moment iff $p(t)=0$ (because $b>0$), while
Pontryagin’s Maximum Principle guarantees $p(t)\neq0$ for all~$t$.
\end{proof}

By separating trajectories into the two types above, we obtain the following
result.

\begin{theorem}
\label{thm:affine_lorentz_extremals}
Assume that the set $\Omega$ satisfies Assumptions~\ref{main_assumption} and
\ref{dual_assmps}.  
Then, in problem~\eqref{eq:Lobachevski_problem_in_coordinates}, every extremal
is either \emph{abnormal lightlike} or \emph{normal timelike}.

\begin{enumerate}
\item[\textup{(i)}]%
  Let $(a(t),b(t))$ be the projection to $\mathrm{Aff}_+\mathbb{R}$ of a
  timelike extremal.  Then either  
  \begin{itemize}
  \item[(i.a)] the extremal is not singular with respect to any edge of
        $\Omega$ on any set of positive measure; or
  \item[(i.b)] the control $(u_1,u_2)$ lies, for a.e.\ $t$, on a
        (maximal) facet $F\subset\partial\Omega$ whose supporting line
        is horizontal\footnote{%
        The boundary of $U=\Omega$ may contain no points with a vertical
        supporting line (in which case (i.b) never occurs), or it may contain
        \emph{at most one} such facet $F$ by the ray property.  In the latter
        case $F$ can be a point or a segment.}
        (if $\dim F>0$, the extremal is singular along $F$ for every
        $t\in[0,T]$).
  \end{itemize}

\item[\textup{(ii)}]%
  If $(a(t),b(t))$ is the projection of a lightlike extremal, then it is
  abnormal and the control belongs to one of the two boundary rays\footnote{%
  Recall that $\partial C=l_0\cup l_1$.} $l_0$ or $l_1$ for a.e.\ $t$.
\end{enumerate}

Moreover:

\begin{itemize}
\item
  In case \textup{(i.a)} there exist constants $c_1\neq0$ and $c_2$ such that
  \[
      a=c_1\,\sinh_{\Omega^\diamond}\eta + c_2,\qquad
      b=c_1\,\cosh_{\Omega^\diamond}\eta,
  \]
  where $\cosh_{\Omega^\diamond}\eta(t)\neq0$ for all $t$, and the “angle’’
  $\eta(t)$ satisfies the quadrature
  \[
      t=\int \frac{d\eta}{\cosh_{\Omega^\diamond}\eta}.
  \]

\item
  In case \textup{(i.b)} we have
  $b(t)=c_3e^{c_4t}>0$ with constants $c_3,c_4$, valid for every~$t$; the
  constant $c_4$ equals the second coordinate of the horizontal facet~$F$.

\item
  In case \textup{(ii)} there are exactly two trajectories (whose Lorentzian
  length is zero) issuing from the identity and parallel to the rays
  $l_{0,1}$.
\end{itemize}
\end{theorem}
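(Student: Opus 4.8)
The plan is to begin with Proposition~\ref{prop:time_or_light_like_affr}, which—via Theorem~\ref{thrm: light_space_like} together with the observation that $h\neq0$ whenever $p\neq0$ (since $b>0$)—already partitions every extremal into a normal timelike and an abnormal lightlike class; it then remains to integrate each branch. For a timelike extremal I would take the description of Theorem~\ref{thrm: light_space_like}(1): under the natural parametrisation $\nu(u)=1$ one has $(-h_1,h_2)\in\partial\Omega^\diamond$, so that $h_1=-\cosh_{\Omega^\diamond}\eta$, $h_2=\sinh_{\Omega^\diamond}\eta$ and $u=(\cosh_\Omega\theta,\sinh_\Omega\theta)$ with $\theta\in{}^\diamond\eta$. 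The relation $[f_1,f_2]=-f_1$ gives $h_3=-h_1=\cosh_{\Omega^\diamond}\eta$, so the adjoint system collapses to $\dot h_1=u_2h_1$, $\dot h_2=-u_1h_1$. Feeding these into the polar-coordinate formula for the boundary curve $(-h_1,h_2)=(\cosh_{\Omega^\diamond}\eta,\sinh_{\Omega^\diamond}\eta)$ (whose radial variable is identically $1$) developed in Section~\ref{sec:convex_trigonometry}, and simplifying with the duality identity $\cosh_\Omega\theta\,\cosh_{\Omega^\diamond}\eta-\sinh_\Omega\theta\,\sinh_{\Omega^\diamond}\eta=1$ of Proposition~\ref{antipolar_corresp}, I expect to reach the single quadrature $\dot\eta=\cosh_{\Omega^\diamond}\eta$, i.e.\ $t=\int d\eta/\cosh_{\Omega^\diamond}\eta$.

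\textbf{Integration of the trajectory and the (i.a)/(i.b) split.}
Given this quadrature the trajectory integrates at once. By the differentiation rules of Theorem~\ref{thrm: hyp_trig_derivatives}, $\tfrac{d}{dt}\cosh_{\Omega^\diamond}\eta=\sinh_\Omega\theta\,\dot\eta=\sinh_\Omega\theta\,\cosh_{\Omega^\diamond}\eta$, whence $\tfrac{d}{dt}\ln\cosh_{\Omega^\diamond}\eta=\sinh_\Omega\theta=\dot b/b$ and therefore $b=c_1\cosh_{\Omega^\diamond}\eta$; the parallel computation $da/d\eta=c_1\cosh_\Omega\theta=c_1\,\tfrac{d}{d\eta}\sinh_{\Omega^\diamond}\eta$ yields $a=c_1\sinh_{\Omega^\diamond}\eta+c_2$, with $c_1\neq0$ forced by $b(0)=1>0$. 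The alternative (i.a) versus (i.b) I would read off from the scalar phase portrait of $\dot\eta=\cosh_{\Omega^\diamond}\eta$: where $h_3=\cosh_{\Omega^\diamond}\eta\neq0$ the angle $\eta$ is strictly monotone, so the instants at which ${}^\diamond\eta$ is multivalued are swept through in zero time and no positive-measure edge-singularity can form—this is case (i.a), with $\cosh_{\Omega^\diamond}\eta(t)\neq0$ throughout, as asserted.

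\textbf{The singular case and the lightlike branch.}
Case (i.b) is the complementary equilibrium $h_3\equiv0$, i.e.\ $h_1\equiv0$: then $h$ is constant, its support covector $(-h_1,h_2)=(0,h_2)$ defines the horizontal line $L=\{-h_2y=1\}$, and by the facet–corner duality of Proposition~\ref{prop:good_points} the control ranges over the entire horizontal facet $F\subset\partial\Omega$ (unique by the ray property). As every point of $F$ shares the same second coordinate $c_4$, the equation $\dot b=u_2b=c_4b$ integrates to $b=c_3e^{c_4t}$, while the freedom in $u_1$ along $F$ is precisely the singular freedom. For the lightlike branch (ii) I would invoke Theorem~\ref{thrm: light_space_like}(2): here $h\in\partial C^{*}$, the control lies on $\partial C=l_0\cup l_1$, and $\nu$ vanishes on $\partial C$ by property~$(*)$, so the Lorentzian length is $0$. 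The feature peculiar to $\mathrm{Aff}_+\mathbb{R}$ is that $\dot h_1=u_2h_1$ is homogeneous in $h_1$, so $h_1(t)=h_1(0)\exp\!\int_0^tu_2\,ds$ keeps a constant sign; hence $h$ never reaches the origin, no switch occurs, and the control stays on a single ray throughout, giving exactly the two straight-line trajectories issuing from the identity parallel to $l_0$ and $l_1$.

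\textbf{Anticipated main obstacle.}
The delicate point is the rigorous justification of the (i.a)/(i.b) dichotomy—namely that a positive-measure edge-singularity forces $h_3\equiv0$ and thereby selects precisely the \emph{horizontal} facet, rather than any other corner $\Omega^\diamond$ might possess. The argument must fuse the facet–corner duality with the quadrature: lingering on a fixed facet $F$ for positive time means $\eta$ stagnates at the angle $\eta_F$ of the dual corner for positive time, which by $\dot\eta=\cosh_{\Omega^\diamond}\eta$ forces $\cosh_{\Omega^\diamond}\eta_F=0$; but $\cosh_{\Omega^\diamond}\eta_F=0$ places $\omega^\diamond_F$ on the $q$-axis, so $L(\omega^\diamond_F)$ is horizontal—and conversely every non-horizontal corner has $\dot\eta\neq0$ and is crossed instantaneously. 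A secondary technicality is the null set of times where ${}^\diamond\eta$ is multivalued, on which Theorem~\ref{thrm: hyp_trig_derivatives} supplies only one-sided derivatives; since $\eta$ is strictly monotone in case (i.a) these times are negligible and leave the quadrature intact.
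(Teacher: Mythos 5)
Your proposal is correct and follows essentially the same route as the paper's own proof: the timelike/lightlike dichotomy via Proposition~\ref{prop:time_or_light_like_affr}, the substitution $h_1=-\cosh_{\Omega^\diamond}\eta$, $h_2=\sinh_{\Omega^\diamond}\eta$ yielding $\dot\eta=\cosh_{\Omega^\diamond}\eta$, the equilibrium-versus-strictly-monotone phase-portrait split producing (i.a)/(i.b) with the horizontal facet singled out by $\cosh_{\Omega^\diamond}\eta_F=0$, and the linear homogeneous ODE for $h_1=-h_3$ excluding switches in the lightlike case. The only cosmetic difference is that you obtain the quadrature from the polar-coordinate proposition of Section~\ref{sec:convex_trigonometry} combined with the duality identity of Proposition~\ref{antipolar_corresp}, whereas the paper differentiates $h_1,h_2$ directly via Theorem~\ref{thrm: hyp_trig_derivatives}; both computations give the same equation.
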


Thus, timelike extremals of type~\textup{(i.a)} are described as follows: one
reflects the antipolar set $\Omega^\diamond$ across the diagonal
$(p_1,p_2)\mapsto(p_2,p_1)$, scales it by a factor $c_1>0$, and shifts it
horizontally by an arbitrary amount~$c_2$.  
Any arc of the resulting curve lying in the upper half-plane $b>0$ is the
projection of some non-singular extremal.  
Case~\textup{(i.b)} resembles vertical geodesics in the Poincaré half-plane
model of the Lobachevsky plane.  
Case~\textup{(ii)} consists of the two lightlike extremals whose Lorentzian
length is zero.

\begin{proof}
We first rewrite the Pontryagin system.  In the variables
$h=(h_1,h_2)=b(p_1,p_2)$ the PMP equations are
\begin{equation}
\label{eq:Hamiltonian_system_Lobachevski_plane}
\begin{cases}
\dot a   = b\,u_1,\\ 
\dot b   = b\,u_2,\\[2pt]
\dot h_1 =-u_2\,h_3 = u_2\,h_1,\\
\dot h_2 = u_1\,h_3 =-u_1\,h_1,\\[2pt]
\displaystyle
\mathcal{H}(h,u)=\langle h,u\rangle-\lambda_0\nu(u)\to\max_{u\in C},
\end{cases}
\end{equation}
with $h_3=\{h_1,h_2\}=-h_1$.

\medskip\noindent
\emph{Lightlike case.}
By Proposition~\ref{prop:time_or_light_like_affr}, lightlike extremals correspond to $\lambda_0=0$.  Then $u(t)\in\partial C$ for a.e.\ $t$
(Theorem~\ref{thrm: light_space_like}).  
Switches between the two rays of $\partial C$ occur only when $h_3(t)=-h_1(t)$ changes sign.  But $h_3$ satisfies
\(
\dot h_3=-u_2h_3=-u_2h_3,
\)
so $h_3$ is either identically~$0$ or sign-constant; hence at most one switch is possible.  
If $h_3\equiv0$ then $h_1\equiv0$ and $\dot h_2=u_1h_3\equiv0$, while
$h_2\ne0$ by Proposition~\ref{prop:time_or_light_like_affr}.  Thus $(h_1,h_2)\neq0$ for all $t$ and the control stays on a fixed boundary ray $l_k$.  
Projecting to $\mathrm{Aff}_+\mathbb{R}$ yields the two lightlike trajectories of case~(iii).

\medskip\noindent
\emph{Timelike case.}
With $\lambda_0=-1$, the maximised Hamiltonian is
\begin{equation}
\label{eq:hamiltonian_lobachevski_plane}
   \mathcal{H}(h,u)=\langle h,u\rangle+\nu(u)\to\max_{u\in C}.
\end{equation}
By Theorem~\ref{thrm: light_space_like} the maximum equals $0$ and
$(-h_1,h_2)\in\Omega^\diamond$.  
If $(-h_1,h_2)\in\partial\Omega^\diamond$ we write
\begin{equation}
\label{eq:Lobachevski_plane_trigonometry_change}
   h_1=-\cosh_{\Omega^\diamond}\eta,\qquad
   h_2=\sinh_{\Omega^\diamond}\eta .
\end{equation}
Normalising to $\nu(u)=1$, the maximiser is
$u=(\coshi_\Omega\theta,\sinhi_\Omega\theta)$ with $\theta\in{}^\diamond\eta$.

Applying Theorem~\ref{thrm: hyp_trig_derivatives} and inserting
\eqref{eq:Lobachevski_plane_trigonometry_change} into
\eqref{eq:Hamiltonian_system_Lobachevski_plane} we obtain
\[
  \dot\eta\,\sinh_\Omega\theta = \sinh_\Omega\theta\,\cosh_{\Omega^\diamond}\eta,
  \qquad
  \dot\eta\,\cosh_\Omega\theta = \cosh_\Omega\theta\,\cosh_{\Omega^\diamond}\eta ,
\]
whence $\dot\eta=\cosh_{\Omega^\diamond}\eta$ and
$d\eta=\cosh_{\Omega^\diamond}\eta\,dt$.

\smallskip
If $\cosh_{\Omega^\diamond}\eta(t)\not\equiv0$, the extremal is nowhere singular.
From $\dot b=b\sinh_\Omega\theta$ and the relations above we deduce
\[
  b=c_1\,\cosh_{\Omega^\diamond}\eta,\qquad
  a=c_1\,\sinh_{\Omega^\diamond}\eta+c_2,
\]
with constants $c_1\neq0$ and $c_2$, while
\(
  t=\int d\eta/\cosh_{\Omega^\diamond}\eta.
\)
This is case~(i).

\smallskip
If instead $\cosh_{\Omega^\diamond}\eta(t)\equiv0$, then
$\eta(t)\equiv\eta_0$ for the unique angle $\eta_0$ with
$\cosh_{\Omega^\diamond}\eta_0=0$, so
$\sinh_{\Omega^\diamond}\eta_0\neq0$ is constant.  
Consequently $\sinh_\Omega\theta$ is constant and
$u(t)$ lies on a horizontal facet $F$ of $\partial\Omega$; writing
$\sinh_\Omega\theta=\mathrm{const}=c_4$, integration of
$\dot b=b\,c_4$ gives $b(t)=c_3e^{c_4t}$,
which is case~(ii).
\end{proof}

\section{Sub-Lorentzian problems on three-dimensional unimodular Lie groups}
\label{sec:3d_groups}

In this section we explicitly integrate the \emph{vertical subsystem} of the
Pontryagin Maximum Principle for sub-Lorentzian problems on the
three-dimensional unimodular Lie groups
\[
    G = SL(2),\ SU(2),\ SE(2),\ SH(2),\ \mathbb{H}_3.
\]
The general formulation is still~\eqref{eq:subl_general_problem}, where
$U\subset T_1G$ is a convex two-dimensional set.  
Because $U$ is two-dimensional, we henceforth write $\Omega=U$ so that the
notation matches the convex trigonometry, and we put
$\nu=\nu_\Omega$ for brevity.  
Assume that $\Omega$ satisfies Assumptions~\ref{main_assumption}
and~\ref{dual_assmps}.  
By the Pontryagin principle the Hamiltonian is
\[
   \mathcal{H}
      =\langle p,dL_q u\rangle-\lambda_0\nu(u)
      \ \longrightarrow\ \max_{u\in C},
   \qquad
   \lambda_0\in\{0,-1\},
\]
with $p\in T^*_qG$.

Introduce left-invariant coordinates on each fibre of $T^*G$ by setting
\[
   h_k=\langle p,dL_qf_k\rangle,\qquad k=1,2,3,
\]
where $f_k$ is a Lie-algebra basis
(see Section~\ref{ssec:3d_lie_groups_porblem_statement}).  
On $T^*_1G$ (i.e.\ at $q=1$) the Lie–Poisson brackets of the Hamiltonians
$h_k$ read
\[
   \{h_j,h_k\}
     =\{\langle p,f_j\rangle,\langle p,f_k\rangle\}
     =\langle p,[f_j,f_k]\rangle .
\]
Using the structure constants in~\eqref{eq:3d_lie_brackets} we have
\[
   \{h_1,h_2\}=h_3,\qquad
   \{h_3,h_1\}=a\,h_2,\qquad
   \{h_3,h_2\}=b\,h_1,
\]
where $a,b\in\{0,\pm1\}$ are constants.  
In the coordinates $h_j$ the Pontryagin Hamiltonian becomes
\[
   \mathcal{H}
      =h_1u_1+h_2u_2-\lambda_0\nu(u)
      \ \longrightarrow\ \max_{u\in C}.
\]

Hence the PMP Hamiltonian equations are
$\dot h_j=\{\mathcal{H},h_j\}$ and $\dot q=\{\mathcal{H},q\}$.  
This splits off a three-dimensional subsystem, independent of $q$,
\begin{equation}
\label{eq:vertical_subsystem}
\begin{gathered}
   h_1u_1+h_2u_2-\lambda_0\nu(u)\to\max_{u\in C},\\[2pt]
   \begin{cases}
     \dot h_1=-h_3u_2,\\
     \dot h_2= h_3u_1,\\
     \dot h_3=-a\,h_2u_1- b\,h_1u_2.
   \end{cases}
\end{gathered}
\end{equation}
Following standard terminology, we call~\eqref{eq:vertical_subsystem} the
\emph{vertical part} of the PMP ODE system for left-invariant problems: it has
half the dimension (three instead of six), and it can be integrated
independently of the horizontal equations
$\dot q=\{\mathcal{H},q\}$ on the group~$G$.  
Once~\eqref{eq:vertical_subsystem} is integrated, its solution yields the
optimal control $u(t)$—which therefore does not depend on the solution of the
lower-level equation $\dot q=\{\mathcal{H},q\}$.
\begin{proposition}
\label{prop: not_abnormal_11}
Every extremal of \eqref{eq:vertical_subsystem} can be re-parametrised so that $u\neq0$ for almost every $t\in[0,T]$. In such a parametrisation we have $(h_1,h_2)\neq0$ for almost every $t\in[0,T]$.  
Consequently, extremals split into \emph{lightlike} and \emph{timelike} classes.
\end{proposition}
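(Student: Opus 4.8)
The plan is to treat the three assertions of the proposition separately: the re-parametrisation, the a.e.\ non-vanishing of $(h_1,h_2)$, and the resulting dichotomy. For the re-parametrisation I would reuse the excision argument of Section~\ref{sec:optimal_control_problem}. On the set $N=\{t\mid u(t)=0\}$ all three right-hand sides of \eqref{eq:vertical_subsystem} vanish, so $h=(h_1,h_2,h_3)$ is constant on each interval of $N$, and simultaneously $\dot q=dL_q u=0$. One may therefore delete the intervals forming $N$ and rescale time; the resulting trajectory joins the same endpoints, has the same Lorentzian length (since $\nu(0)=0$), continues to solve \eqref{eq:vertical_subsystem} (the glued $h$ stays continuous and the equations still hold a.e.), and now satisfies $u\neq0$ a.e. Henceforth I would work exclusively in this parametrisation, which is essential for the step below.

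For the non-vanishing of $(h_1,h_2)$ I would split on the value of the constant $\lambda_0$. If $\lambda_0=-1$ and $(h_1,h_2)(t)=0$ at some instant, the maximised quantity reduces to $\nu(u)\to\max_{u\in C}$; choosing any $u_0\in\Omega$ and using homogeneity together with the ray property gives $\nu(\lambda u_0)=\lambda\nu(u_0)\to+\infty$ with $\lambda u_0\in C$, so the supremum is $+\infty$ and is not attained, contradicting the PMP (equivalently, $(0,0)\notin\Omega^\diamond$, so Proposition~\ref{pmp_antinorms} forbids a finite maximum). Hence in the normal case $(h_1,h_2)\neq0$ for \emph{every} $t$.

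The abnormal case $\lambda_0=0$ is the substantive one and I would argue dynamically. Suppose the set $Z=\{t\mid(h_1,h_2)(t)=0\}$ had positive measure. Since $p$ is Lipschitz and $q$ Lipschitz, each $h_k$ is absolutely continuous, so at almost every Lebesgue density point of $Z$ one has $\dot h_1=\dot h_2=0$. The first two equations of \eqref{eq:vertical_subsystem} then give $h_3u_2=h_3u_1=0$, and because $u\neq0$ a.e.\ in our parametrisation this forces $h_3=0$ at almost every such point. Thus $h=(h_1,h_2,h_3)=0$ on a positive-measure subset of $Z$. But $h_k=\langle p,dL_qf_k\rangle$ with $\{f_1,f_2,f_3\}$ a basis of $\mathfrak g$, so $dL_q$ being an isomorphism makes $h=0$ equivalent to $p=0$, contradicting the PMP non-triviality condition (when $\lambda_0=0$ one must have $p\neq0$ everywhere). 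Therefore $Z$ is null and $(h_1,h_2)\neq0$ a.e.

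Finally I would deduce the dichotomy: $\lambda_0$ is constant along an extremal, so each extremal is normal ($\lambda_0=-1$) or abnormal ($\lambda_0=0$). In the normal case $(-h_1,h_2)\in\Omega^\diamond$ and Proposition~\ref{H_argmax} yields $u\in\mathbb{R}_+\Omega$, i.e.\ a timelike extremal; in the abnormal case the a.e.\ non-vanishing of $(h_1,h_2)$ puts $h$ on $\partial C^{*}$ and $u$ on the corresponding ray of $\partial C$, where $\nu(u)=0$ by property~\hyperref[dual_assmps]{$(*)$}, i.e.\ a lightlike extremal. This is precisely the conclusion of Theorem~\ref{thrm: light_space_like}, whose reasoning transfers verbatim since it uses only the maximisation condition and the equations for $h_1,h_2$, which agree with those in \eqref{eq:vertical_subsystem} (the extra $\dot h_3$ equation plays no role). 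The hard part will be the measure-theoretic density step in the abnormal case: one must justify carefully that absolute continuity of $h_1,h_2$ together with $u\neq0$ a.e.\ genuinely forces $h_3=0$ on almost all of $Z$, discarding the null sets of non-density points and of points where $u=0$ without affecting the conclusion $p\neq0$.
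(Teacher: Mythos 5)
Your proposal is correct and follows essentially the same route as the paper: re-parametrise by excising the set where $u=0$, then show $(h_1,h_2)\neq0$ a.e.\ via the Lebesgue-density argument ($\dot h_1=\dot h_2=0$ on the zero set, so $u\neq0$ forces $h_3=0$, hence $p=0$, contradicting PMP nontriviality), and finally invoke Theorem~\ref{thrm: light_space_like} for the dichotomy. The only cosmetic difference is that you dispatch the normal case separately through unboundedness of $\sup_{u\in C}\bigl(\langle h,u\rangle+\nu(u)\bigr)$ when $h=0$, whereas the paper runs the density argument uniformly for both values of $\lambda_0$; both are valid.
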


\begin{proof}
If $u=0$ on some time interval, the extremal is constant there; by shortening
$T$ and re-parametrising we may assume $u\neq0$ a.e.\ on $[0,T]$.  
Suppose, on the contrary, that $h_1(t)=h_2(t)=0$ on a set
$\mathbb{T}\subset[0,T]$ of positive measure; almost every point of
$\mathbb{T}$ is a Lebesgue (hence limit) point.
Since $h_1,h_2$ are Lipschitz, they are differentiable a.e.; therefore, for
a.e.\ $t\in\mathbb{T}$,
\[
  \dot h_1(t)=-h_3(t)u_2(t)=0,\qquad
  \dot h_2(t)= h_3(t)u_1(t)=0.
\]
Because $u\neq0$, we conclude $h_3(t)=0$ for a.e.\ $t\in\mathbb{T}$, which is
impossible: if $h_1(t_0)=h_2(t_0)=h_3(t_0)=0$ at some $t_0$, then
$p(t_0)=0$, but $p(t)$ obeys a linear ODE and would be identically zero,
forcing $\mathcal{H}\equiv0$, contradicting PMP.  

Since $\Omega$ satisfies Assumptions~\ref{main_assumption}
and~\ref{dual_assmps}, Theorem~\ref{thrm: light_space_like} yields the desired
lightlike/timelike dichotomy.
\end{proof}

\begin{theorem}
Extremals in left-invariant sub-Lorentzian problems on \emph{all}
three-dimensional unimodular Lie groups are either  
\begin{itemize}
\item normal and timelike, or  
\item abnormal and lightlike.
\end{itemize}

On every extremal of \eqref{eq:vertical_subsystem} one has
$\displaystyle\max_{u\in\Omega}\mathcal{H}=0$.  
Along a lightlike extremal $u(t)\in\partial C$ for almost every
$t\in[0,T]$.  

For every timelike extremal there exists a constant $E\in\mathbb{R}$ such that
\[
\begin{cases}
h_1=-\cosh_{\Omega^\diamond}\eta,\\
h_2=\sinh_{\Omega^\diamond}\eta,\\
h_3=\pm\sqrt{\,E-a\bigl(\sinh_{\Omega^\diamond}\eta\bigr)^2
              +b\bigl(\cosh_{\Omega^\diamond}\eta\bigr)^2},
\end{cases}
\qquad
\begin{cases}
u_1=\cosh_\Omega\theta,\\
u_2=\sinh_\Omega\theta,
\end{cases}
\]
with
\begin{equation}
\label{eq:energy_inequality}
   E\;\ge\;a\bigl(\sinh_{\Omega^\diamond}\eta\bigr)^2
          -b\bigl(\cosh_{\Omega^\diamond}\eta\bigr)^2,
\end{equation}
and the angles correspond in the sense $\theta\in{}^\diamond\eta$.

If on an interval $(\alpha,\beta)$ the inequality
\eqref{eq:energy_inequality} is strict, then on that interval $\eta$ satisfies
the quadrature\footnote{The sign $\pm$ may change from one connected sub-interval of $(\alpha,\beta)$ to another, but is fixed on each such sub-interval.}
\[
   t=\pm\int
        \frac{d\eta}{
               \sqrt{\,E-a\bigl(\sinh_{\Omega^\diamond}\eta\bigr)^2
                        +b\bigl(\cosh_{\Omega^\diamond}\eta\bigr)^2}}.
\]

If an extremal is singular along an edge $F$ on a set
$\mathbb{T}\subset[0,T]$ of positive measure, then $\eta\equiv\mathrm{const}$
on $\mathbb{T}$ and \eqref{eq:energy_inequality} holds with equality.
\end{theorem}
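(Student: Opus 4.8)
The plan is to split along the dichotomy and reduce everything to the two-dimensional machinery already in place. First I would apply Proposition~\ref{prop: not_abnormal_11} to reparametrise so that $u\neq0$ and $(h_1,h_2)\neq0$ almost everywhere, and then read off the timelike/lightlike classification directly from Theorem~\ref{thrm: light_space_like}: an abnormal extremal ($\lambda_0=0$) is lightlike with $u(t)\in\partial C$ a.e., while a normal extremal ($\lambda_0=-1$) is timelike with $(-h_1,h_2)\in\Omega^\diamond$. In either case the maximised Hamiltonian vanishes, $\max_{u\in C}\mathcal{H}=0$ (Proposition~\ref{pmp_antinorms} when $\lambda_0=-1$, and the elementary ``$0$-or-$+\infty$'' alternative together with $\mathcal{H}(h,0)=0$ when $\lambda_0=0$). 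This settles the opening assertions, and the lightlike branch needs nothing further.

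The substantive part is the timelike case. Since $u\neq0$ a.e., Proposition~\ref{H_argmax}(1) rules out $(-h_1,h_2)\in\mathrm{int}\,\Omega^\diamond$, so $(-h_1,h_2)\in\partial\Omega^\diamond$ and I may write $h_1=-\cosh_{\Omega^\diamond}\eta$, $h_2=\sinh_{\Omega^\diamond}\eta$ with the normalised control $u=(\cosh_\Omega\theta,\sinh_\Omega\theta)$, $\theta\in{}^\diamond\eta$. Because the $h_k$ are absolutely continuous and $(-h_1,h_2)(t)$ is an a.c.\ curve on $\partial\Omega^\diamond$ avoiding the origin, the $(\cosh_\Omega,\sinh_\Omega)$-decomposition proposition guarantees that $\eta(t)$ is absolutely continuous. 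Differentiating $h_2=\sinh_{\Omega^\diamond}\eta$ and $h_1=-\cosh_{\Omega^\diamond}\eta$ and inserting the dual derivative formulas of Theorem~\ref{thrm: hyp_trig_derivatives}, namely $\tfrac{d}{d\eta}\sinh_{\Omega^\diamond}\eta=\cosh_\Omega\theta=u_1$ and $\tfrac{d}{d\eta}\cosh_{\Omega^\diamond}\eta=\sinh_\Omega\theta=u_2$, I compare with $\dot h_2=h_3u_1$ and $\dot h_1=-h_3u_2$ from~\eqref{eq:vertical_subsystem}; since at least one of $u_1,u_2$ is nonzero, this forces $\dot\eta=h_3$ a.e.

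Next I would extract the energy integral. Substituting the parametrisation into $\dot h_3=-a\,h_2u_1-b\,h_1u_2$ and using $\dot\eta=h_3$ with the same derivative formulas, a short computation gives
\[
    \frac{d}{dt}\Bigl(h_3^2+a\,(\sinh_{\Omega^\diamond}\eta)^2-b\,(\cosh_{\Omega^\diamond}\eta)^2\Bigr)=0,
\]
so that $E:=h_3^2+a(\sinh_{\Omega^\diamond}\eta)^2-b(\cosh_{\Omega^\diamond}\eta)^2$ is constant; solving for $h_3$ produces the stated radical, and the constraint $h_3^2\ge0$ is exactly the energy inequality~\eqref{eq:energy_inequality}. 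On an interval where~\eqref{eq:energy_inequality} is strict one has $h_3\neq0$, hence $\dot\eta=h_3$ keeps a fixed sign and is invertible, and $dt=d\eta/h_3$ integrates to the claimed quadrature with a locally constant sign. Finally, if the extremal is singular along an edge $F$ (with $\dim F\ge1$) on a set $\mathbb{T}$ of positive measure, then the supporting covector $(-h_1,h_2)$ is pinned to the unique point of $\partial\Omega^\diamond$ dual to $F$, so $\eta\equiv\mathrm{const}$ on $\mathbb{T}$; consequently $\dot\eta=0$ and $h_3=0$ a.e.\ on $\mathbb{T}$, which is precisely equality in~\eqref{eq:energy_inequality}.

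I expect the main obstacle to be regularity bookkeeping rather than the algebra. The functions $\cosh_{\Omega^\diamond},\sinh_{\Omega^\diamond}$ are only locally Lipschitz, the correspondence $\theta\leftrightarrow\eta$ is multivalued at the countably many corner angles, and the a.e.\ chain rule used above must be justified off this exceptional set. The plan is to separate the exceptional times into a null set (where $\eta$ is differentiable and crosses a corner value transversally) and the set on which $\eta$ stalls at a corner value on positive measure; the latter is exactly the singular-edge situation already analysed, where $h_3=0$ makes the identity trivially consistent. I would also need an a priori bound keeping $\eta(t)$ (hence the admissible control $u(t)$, hence all $h_k$) bounded on the compact interval $[0,T]$ — nontrivial because $\Omega$ is unbounded — in order to secure the Lipschitz regularity invoked when differentiating the $h_k$.
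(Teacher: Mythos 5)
Your proposal is correct and follows essentially the same route as the paper's proof: the dichotomy via Proposition~\ref{prop: not_abnormal_11} and Theorem~\ref{thrm: light_space_like}, the vanishing of the maximised Hamiltonian, the boundary parametrisation $(-h_1,h_2)=(\cosh_{\Omega^\diamond}\eta,\sinh_{\Omega^\diamond}\eta)$ with $\theta\in{}^\diamond\eta$, the identity $\dot\eta=h_3$, the energy first integral giving \eqref{eq:energy_inequality} and the quadrature, and the pinned-covector argument forcing $\dot\eta=0$ in the singular-edge case. The only inessential difference is that you obtain $\dot\eta=h_3$ by comparing coefficients in $\dot h_1,\dot h_2$ via the chain rule, whereas the paper uses the angle-rate formula $\dot\eta=-h_1\dot h_2+\dot h_1h_2=-h_3(h_1u_1+h_2u_2)=h_3$; your closing regularity remarks (null set of corner crossings, stalling handled by the singular case) are sound and, if anything, more careful than the paper's own treatment.
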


\begin{proof}
By Proposition~\ref{prop: not_abnormal_11} we may assume
$(h_1,h_2)\neq0$ a.e.  Lightlike extremals ($\lambda_0=0$) satisfy
$u(t)\in\partial C$ a.e., while $h(t)$ stays on the corresponding ray of
$\partial C^{*}$; as $h_3=-h_1$ is either identically zero or sign-constant,
no further switches occur.  

For timelike extremals ($\lambda_0=-1$) the maximised Hamiltonian
$\mathcal{H}(h,u)=\langle h,u\rangle+\nu(u)$ equals~$0$ and the maximum is
attained at some $u\neq0$ precisely when
$(-h_1,h_2)\in\partial\Omega^\diamond$, i.e.\ when
\eqref{eq:Lobachevski_plane_trigonometry_change} holds for some angle
$\eta$.  

The dynamics of $\eta$ follows from
$\dot\eta=-h_1\dot h_2+\dot h_1h_2=-h_3(h_1u_1+h_2u_2)=h_3$ and
\eqref{eq:vertical_subsystem}, giving
\[
   \ddot\eta
     =-a\,\cosh_\Omega\theta\,\sinh_{\Omega^\diamond}\eta
       +b\,\sinh_\Omega\theta\,\cosh_{\Omega^\diamond}\eta .
\]
Multiplying by $\dot\eta$ and integrating yields the first integral
$(\dot\eta)^2+a(\sinh_{\Omega^\diamond}\eta)^2
              -b(\cosh_{\Omega^\diamond}\eta)^2=E$,
which is exactly \eqref{eq:energy_inequality}.  
If the inequality is strict on $(\alpha,\beta)$, then $\dot\eta\neq0$ there
and separation of variables gives the stated quadrature.

If the extremal is singular along an edge $F$ of positive-measure time set,
then $(-h_1,h_2)$ must stay at the vertex of $\partial\Omega^\diamond$
dual to $F$, forcing $\dot\eta=0$ on that set and hence equality
in \eqref{eq:energy_inequality}.
\end{proof}

\end{document}